\newcommand{\ur}[1]{\textcolor{blue}{#1}}
\newcommand{\an}[1]{\textcolor{red}{#1}}
\def\@theoremstyleplain#1#2#3#4{%
    \setlength{\topsep}{2cm} % Adjust space before and after the environment
    \@theoremstyle@inner{#1}{#2}{#3}{#4}%
}
\def\@theoremstyledescription#1#2#3#4{%
    \setlength{\topsep}{6pt} % Adjust space before and after the environment
    \@theoremstyle@inner{#1}{#2}{#3}{#4}%
}
\def\@theoremstyleremark#1#2#3#4{%
    \setlength{\topsep}{6pt} % Adjust space before and after the environment
    \@theoremstyle@inner{#1}{#2}{#3}{#4}%
}
\title[Temperatures of Robin Hood]%
{Temperatures of Robin Hood}
\author[Ankita Dargad]{Ankita Dargad}
\address{Department of Mathematics, Indian Institute of Technology Bombay, Powai, Mumbai 400076}
\email{ankitadargad.iitb@gmail.com}
\author[U. Larsson]{Urban Larsson}
\address{Department of Industrial Engineering and Operations Research, Indian Institute of Technology Bombay, Powai, Mumbai 400076}
\email{larsson@iitb.ac.in}
\author[N. Balachandran]{Niranjan Balachandran}
\address{Department of Mathematics, Indian Institute of Technology Bombay, Powai, Mumbai 400076}
\email{niranj@math.iitb.ac.in}
\subjclass[MSC 2020]{Primary 91A46; Secondary 91A05, 11B39} % Replace with appropriate MSC codes
\keywords{Combinatorial Game,  Golden Ratio, Hotstrat, Pingala Sequence, Temperature Theory, Thermograph, Wealth Nim}
\date{\today}
\begin{document}

\begin{abstract}
Cumulative Games were introduced by Larsson, Meir, and Zick (2020) to bridge some conceptual and technical gaps between Combinatorial Game Theory (CGT) and  Economic Game Theory. The partizan ruleset {\sc Robin Hood} is an instance of a Cumulative Game, viz., {\sc Wealth Nim}. It is played on multiple heaps, each associated with a pair of cumulations, interpreted here as wealth. Each player chooses one of the heaps, removes tokens from that heap not exceeding their own wealth, while simultaneously diminishing the other player's wealth by the same amount. In CGT, the {\em temperature} of a {\em disjunctive sum} game component is an estimate of the urgency of moving first in that component. It turns out that most of the positions of {\sc Robin Hood} are {\em hot}. The temperature of {\sc Robin Hood} on a single large heap shows a dichotomy in behavior depending on the ratio of the wealths of the players. Interestingly, this bifurcation is related to Pingala (Fibonacci) sequences and the Golden Ratio $\phi$: when the ratio of the wealths lies in the interval $(\phi^{-1},\phi)$, the temperature increases linearly with the heap size, and otherwise it remains constant, and the mean values has a reciprocal property. It turns out that despite {\sc Robin Hood} displaying high temperatures, playing in the hottest component might be a sub-optimal strategy.
% Combinatorial Games are 2-player (Left and Right) sequential games with no hidden move and no chance element. \textsc{NIM} is an standard example of Combinatorial games. There is a class of rulesets \textcolor{red}{[refer to Urban sir's paper]} on \textsc{NIM} where both players have their own wealth and the play depends on the players wealth. Here, We study NIM one such ruleset, \textsc{Robin Hood}.  
\end{abstract}
% \noindent
% \textit{MSC 2020:} Primary 91A46; Secondary 91A05.\\
% \textbf{Keywords:} Combinatorial games, {\sc Wealth Nim}, Golden Ratio, Temperature, Hotstrat
\maketitle

\section{Introduction}
In the Era of Pingala, two wetland tribes engage in a dispute over land pieces on various islands for farming, which escalates into a war. Being honorable tribes, their chiefs agreed to certain rules for fighting: as a preparation, each tribe allocates a number of soldiers to each island, and the tribe with the smaller number of soldiers gets to start. % a toss of a fair coin. 
On day one, this tribe gets to challenge their opponent on an island of their choice, and by the principle of ``first player advantage'' they get to weaken the opponent's strength on that island. The next day, the other tribe retaliates and attacks any island of their choice. This continues, and, while fighting, every day some land pieces gets ruined. The war concludes when only one tribe is left on each island. 
At that point, they sum up the total remaining fertile land pieces on their respective conquered islands, and return home to respective villages to celebrate the end of the war. %and the one with the larger number of land pieces was declared the winner. 
\footnote{This has some resemblance with the classical ``Colonel Blotto'' war game \cite{EB1921}. The main difference is that they use simultaneous play, and their game concerns the assignment of the forces to the islands, while in our setting, we will regard the assignments of soldiers as given, and the main challenge is the sequential selections for `the next fight'. The story is 100\% fictional.}
% was left under the control of a single 

% The tribes attack their opponents alternately, targeting islands of their choice to weaken their opponents' strength. some land pieces were also ruined due to the war. The war concluded when each island was left under the control of a single tribe  %The victorious tribes on each island then resumed farming and lived in harmony. 

How can the tribes maximize their gains in terms of conquered land pieces? And, which island should they target first?

\begin{figure}[ht]
    \centering
    \begin{tikzpicture}
        \pic  at (0,0) {position1};
        \pic at (5.5,0) {position2};
        \draw[->,black,thick,red] (2.2,0)--(3.26,0); %node[above,midway] (play1) {\textcolor{red}{Red}};
        
        \pic at (5.5,-4.2) {position3};
        \draw[->,black,thick,blue] (5.5,-1.69)--(5.5,-2.55); %node[right,midway] (play2) {\textcolor{blue}{Blue}};
        \pic at (0,-4.2) {position4};
        \draw[->,black,thick,red] (3.3,-4.2)--(2.2,-4.2); % node[above,midway] (play3) {\textcolor{red}{Red}};  
    \end{tikzpicture}
    \caption{A three days' war started by the red tribe. The war ended on the 3rd day as only one tribe is left on each island. Ruined land pieces and beaten soldiers are colored gray. The colored arrows specify the attacking tribe.}
    \label{fig:3 days War in the village}
\end{figure}
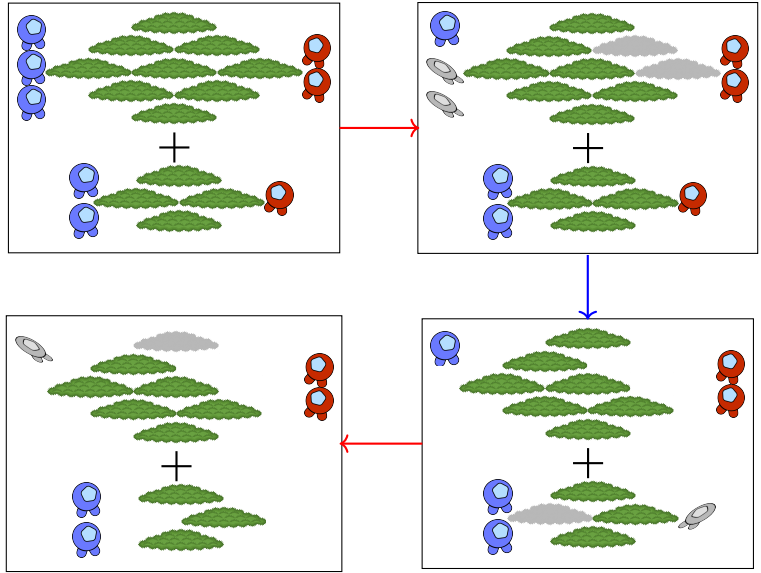

This scenario describes a game of a ruleset dubbed {\sc Robin Hood}, which is an instance of {\sc Wealth Nim}, a {\sc Cumulative Game} \cite{URYCG2020}. This is a two-player alternating play combinatorial game \cites{S2013,BCG2004} that is played on a finite number of heaps of finite sizes. To each heap, each player has an associated wealth, an integer that we shall refer to as the {\em heap wealth}, which determines their strength on that heap. On their turn, a player 
\begin{itemize}
    \item chooses one of the heaps,
    \item removes a certain number of tokens from that heap not exceeding their own heap wealth, and
    \item reduces the opponent's wealth on that heap by the same amount.
\end{itemize}
 
 A player who cannot move loses (normal play). Let $\Nat=\{1,2,\ldots\}$, and let $\Nat_0=\Nat\cup\{0\}$. We denote by $(n;\, a,b)$ a {\sc Robin Hood} instance comprising of a single heap of size $n\in \Nat_0$, Left's wealth equal to $a\in \Nat_0$ and Right's wealth equal to $b\in \Nat_0$. In case the player with greater wealth removes more tokens than the opponent's wealth, their wealth drops below zero. If so, by convention, we set the opponent's wealth to zero, since they cannot play further. 
In the scenario described above, each island represents a heap, the number of land pieces represents the heap size and the tribes' strengths represent the players' wealths. The blue (resp. red) tribe represents Left (resp. Right) player.\footnote{Rumors, from the Sherwood forest, tell that Robin Hood got inspired by folktales about the wetland tribes' wars, in the spirit: ``I take, and you pay!''.}

Figure~\ref{fig:3 days War in the village} illustrates a play on a {\sc Robin Hood} instance with two heaps of sizes 9 and 4 (land pieces). Left (blue tribe) has heap wealths of 3 and 2, and Right (red tribe) has heap wealths of 2 and 1, respectively. The `$+$' sign denotes the disjunctive sum, meaning that on their turn, a player chooses exactly one heap (island) to play on, while the other heap remains the same. The war ends when only one tribe remains on each island, but {\sc Robin Hood} continues due to the normal play convention. %Regardless, the winner remains the same in both the war and the game. 
It ends when the current player has no token to remove. In this example, after the war, Left has at most three rounds of play before she runs out of tokens, while Right's resources can still be abundant. Thus, Right wins the game. (By looking at the fine details, Left felt somewhat disappointed as she realized that if she instead would have kept responding in the first component, then Right would have won by only two tokens.)
%she could have made a better choice at the beginning of the war. If she 
%However, it was not the best play for Blue at the beginning of the war. If Blue keeps responding in the first component from the beginning, it turns out that Red still wins, but by only 2 tokens. 

By separating an instance of {\sc Robin Hood} into a war phase, and a conclusion phase, we make a first observation. After the war phase, for each player there is a total number of remaining tokens in all heaps, where they still have non-zero heap wealth. This number can be considered as `their number of free moves', as those tokens can be removed one at a time, without interference from their opponent. 

%the players continue to claim land pieces, one at a time, on their turns. The game ends when Blue has 0 land pieces to claim (no moves) and Red still has 3. 
Intuitively, both players desire to start as they get to diminish the opponent's strength as early as possible by as much as possible. Additionally, it turns out that, in the Figure~\ref{fig:3 days War in the village} example, playing in the first component is preferential for either player.
%both prefer to move in the first component as it seems more ``urgent'' compared to the second one. Why is this the case?

 To elucidate: If Right starts in the first component, then as discussed before, Left can respond in the first component, and Right can win by $(5-3)=2$ moves,  whereas, if Right starts in the second component, Left can respond in the first component and gain 7 moves, while Right receives only 2 moves from the second component. Therefore, Right loses by $(7-2)=5$ moves. Thus, the total loss for Right by starting in the second component instead of the first one is $2-(-5)= 7$ moves. 
%\an{Note that the war shown in Figure~\ref{fig:3 days War in the village} is not an optimal, as Red wins 3 moves there.}

If Left starts and plays in the first component, she gains 7 land pieces in this component and also receives 2 land pieces from the second one, leading to a win by 9 moves. However, if Left starts in the second component, Right can play in the first component on his turn. In the end, Left gains 3 land pieces from the second component, while Right gets 5 land pieces from the first component. Therefore, Left loses by 2 moves. Thus, the total loss for Left by starting in the second component instead of the first is $9-(-2)=11$.

The notion of {\it temperature} \cites{S2013,BCG2004}  (here Definition~\ref{def:temp}) is an estimate,  which attempts to capture this sense of urgency. In many cases, a higher temperature indicates greater urgency, while a lower temperature indicates less urgency. In keeping with this terminology, a game with positive temperature is called a ``hot'' game. We will discuss this in greater detail in Section~\ref{sec: basicsetup}. Another related concept is the {\em mean value} of a game. It is an estimate of how good a game is for the respective player; a larger mean value is usually better for Left and vice versa.

% , where temperature of a game $G$ is denoted by $t(G)$.
% This urgency, which is termed as Temperature in CGT, is a well studied invariant \cite{S2013}.
% Intuitively, the temperature of a game component indicates how urgent it is to make a move in that game component. 
Based on this  intuition %the description of the game and the intuitions that arise,  
our main question is:
%the following questions seem natural:\vspace{2mm}
\vspace{3mm}

\begingroup
    \centering
    \textit{What are the temperatures and mean values of {\sc Robin Hood}?}\\
    %\textit{When is play on the hottest component a winning strategy?}\\
\endgroup
\vspace{3mm}

CGsuite \cite{CGSuite} guides us on the temperatures and the mean values of the two {\sc Robin Hood} games, $(n;5,4)$ and $(n;5,3)$, with varying $n$. For any game $G$, let $\te(G)$ and $\m(G)$ denote the temperature and mean of $G$, respectively. See Table~\ref{tab:tempwithmean}.

\begin{comment}

\renewcommand{\arraystretch}{1}%
\begin{table}[ht]
    \centering
    \begin{tabular}{|c|c|c|c|c|}
        \hline
        $ n$ & $t(n;3,2)$  &$m(n;3,2)$ & $t(n;2,1)$ & $m(n;2,1)$ \\ \hline
        \hline
         $1$  & $0$ &0& $0$ & 0\\
        \hline
         $2$  & $0$ &0& $1/2$ & 1/2\\
         \hline
         $3$  & $1/2$ &1/2& $1$ &1/2\\
         \hline
         $4$  & $5/4$ &3/4& $7/4$ &\pmb1\\
         \hline
         $5$  & $\it 2$ &\pmb 1& $19/8$ &\pmb2\\
         \hline
         $6$  & $\it 3$ &\pmb 1& $3$ &\pmb3\\
         \hline
         $7$  & $\it 4$ &\pmb1& $7/2$ &\pmb4\\
         \hline
         $8$ & $\it 5$ &\pmb1& $\it 4$ &\pmb5\\ 
         \hline
         $9$ & $\it 6$ &\pmb1& $\it 4$ &\pmb6\\
         \hline
         $10$ & $\it 7$ &\pmb1& $\it 4$ &\pmb7\\
         \hline
         $11$ & $\it 8$ &\pmb1& $\it 4$ &\pmb8\\
         \hline
         
    \end{tabular}
    \caption{The temperatures and mean values of the games $(n;5,4)$ and $t(n;5,3)$ for a few initial heap sizes $n$. We indicate in italicized and bold when a pattern is seen.}
    \label{tab:tempwithmean}
\end{table}
\end{comment}

\renewcommand{\arraystretch}{1}%
\begin{table}[ht]
    \centering
    \begin{tabular}{|c|c|c|c|c|}
        \hline
        $ n$ & $\te(n;5,4)$&$\m(n;5,4)$& $\te(n;5,3)$& $\m(n;5,3)$\\ \hline
        \hline
         $3$  & $0$ &0& $0$ & 0\\
        \hline
         $4$  & $0$ &0& $1/2$ & 1/2\\
         \hline
         $5$  & $1/2$ &1/2& $1$ &1\\
         \hline
         $6$  & $5/4$ &3/4& $7/4$ &5/4\\
         \hline
         $7$  & $17/8$ &7/8& $19/8$ &13/8\\
         \hline
         $8$  & $49/16$ &15/16& $3$ &2\\
         \hline
         $9$  & $\pmb 4$ &\it1& $7/2$ &5/2\\
         \hline
         $10$ & $\pmb5$ &\it 1& $\pmb4$ &\it 3\\ 
         \hline
         $11$ & $\pmb6$ &\it 1& $\pmb4$ &\it 4\\
         \hline
         $12$ & $\pmb7$ &\it 1& $\pmb4$ &\it 5\\
         \hline
         $13$ & $\pmb8$ &\it 1& $\pmb4$ &\it 6\\
         \hline
         
    \end{tabular}
    \caption{The temperatures and mean values of the games $(n;5,4)$ and $(n;5,3)$ for a few initial heap sizes $n$. We indicate in italicized and bold when patterns emerge.}
    \label{tab:tempwithmean}
\end{table}

% \par Table~\ref{tab:tempwithmean} provides the temperatures and means of two {\sc Robin Hood} instances with fixed wealth and varying heap sizes.

Consistency can be observed in the values of the temperature and the mean of the two games in Table~\ref{tab:tempwithmean} for large heap sizes. 
It also shows a reciprocal behavior between the temperature and the mean values. The temperature (mean value) of the game $(n;5,4)$ increases (stabilizes) as the heap size grows. Whereas, the temperature (mean value) of the game $(n;5,3)$ stabilizes (increases) with increasing heap size.

Next, we observe the temperatures for different wealth pairs in Table~\ref{tab: Relation between Temperature and $n,a$ and $b$}. 

\renewcommand{\arraystretch}{1}%
\begin{table}[ht]
    \centering
    \begin{tabular}{|l|l|l|l|}
        \hline
        
        $(n;a,b)$ & \textit{Property} & $\te(n;a,b)$ & \textit{Bound}  \\ 
        \hline
        \hline
        \bm{$(n;1,1)$}   & {\bf Increasing}     & \bm{$n-1$}     & \bm{$ 1$} \\  
        \hline
        $(n;1,2)$   & Stabilizing    & $1$       & $ 3$  \\
        \hline
        $(n;1,3)$   & Stabilizing    & $1$       & $ 4$  \\
        \hline
        \bm{$(n;2,3)$}   & {\bf Increasing }    & \bm{$n-3$}     & \bm{$8$}  \\
        \hline
        $(n;2,4)$   & Stabilizing    & $2$       & $ 6$ \\ 
        \hline
        $(n;2,9)$   & Stabilizing    & $2$       & $ 11$ \\
        \hline
        \bm{$(n;3,4)$ }  & {\bf Increasing}     & \bm{$n-4$ }    & \bm{$7$ } \\
        \hline
        $(n;3,5)$   & Stabilizing    & $4$       & $10$ \\
        \hline
        $(n;3,6)$   & Stabilizing    & $3$       & $9$ \\
        \hline
        %$(n;3,7)$   & Stabilizing    & $3$       & $ n\geq 10$ \\
        %\hline
        \bm{$(n;5,8)$}  & {\bf Increasing}    & \bm{$n-8.5$ }  & \bm{$15$ }\\
        \hline
        $(n;5,9)$   & Stabilizing    & $6$       & $ 16$ \\
        \hline
        %$(n;5,10)$  & Stabilizing    & $5$       & $ n\geq 15$ \\
        %\hline
        \bm{$(n;7,11)$ } & {\bf Increasing }    & \bm{$n-11.5$}  & \bm{$20$}  \\
        \hline
        $(n;7,12)$  & Stabilizing    & $9$       & $15$ \\
        \hline
    \end{tabular}
    \caption{Temperatures of the  games $(n;a,b)$ for different pairs $(a,b)$. The second column specifies the behavior of the temperature with increasing $n$. The last column is the lower bound of $n$ for which the property holds.%\an{should we also put mean values here}\ur{If there is room...}
    }
    \label{tab: Relation between Temperature and $n,a$ and $b$}
\end{table}

Table~\ref{tab: Relation between Temperature and $n,a$ and $b$} illustrates whether the temperature of $(n;a,b)$ is stabilizing or increasing with $n$ for different pairs $(a,b)$. 

When one tribe is significantly stronger than the other and the number of land pieces is large, the stronger tribe will have a high chance of getting huge loot of the war, regardless of the actions of the weaker tribe. As a result, neither tribe has a strong desire to start the war, irrespective of the number of land pieces. The less desire to start the war shows that the heat in this situation should not increase with the number of land pieces.
However, when the players' wealth is relatively equal, both players benefit from starting the game. This increases their desire to begin, leading to a rise in temperatures as the heap size increases.

%When one tribe is significantly stronger than the other and the number of land pieces is large, the stronger player will have a high chance of winning, regardless of the actions of the weaker player. As a result, neither player has a strong desire to start the game, irrespective of the heap size, as long as it is very large. The less desire to start the game shows that the temperature of such games should not increase with the heap sizes. However, when the players' wealth is relatively equal, both players benefit from starting the game. This increases their desire to begin, leading to a rise in temperatures as the heap size increases.

%When one of the players is quite stronger than the other in terms of wealth, then no matter what the other player does, the first player will have a high chance of winning. Due to that, the players do not have a strong desire to start the game irrespective of the heap size. Whereas, if the wealths of the players' are in close proximity, then both players gain benifit by starting the game. This means both players have strong desire to start the game, leading to increase in temperatures with increasing heap size.
This indicates that the two different patterns of the temperature values is related to the ratio of players' wealth. It is clear from Table~\ref{tab: Relation between Temperature and $n,a$ and $b$} that whenever the ratio of wealth (larger to smaller) is bigger than $12/7$, the temperature stabilizes, and it keeps increasing when the ratio is larger than $11/7$. 
%It indicates that . This leads to a guess that the bifurcation is related to the Golden ratio $\phi=\frac{1+\sqrt{5}}{2}$.

%In this paper, we address both these questions.

Our main result provides a complete description of the temperatures and mean values of {\sc Robin Hood} when played on a single large heap. Let $\phi=\frac{1+\sqrt{5}}{2}$ denote the {\em golden ratio}.
%The following theorems are the main results of this study.
\begin{theorem}[Main Theorem]\label{thm: main theorem}
Let $G = (n;a,b)$ be an instance of {\sc Robin Hood}, where \(n\), \(a\) and \(b\) are %\sout{non-negative integers} 
positive integers.  Let $(U_k)_{k\geq 0}$ be the unique sequence of positive integers such that
\begin{enumerate}
    \item $U_0\geq U_1$,
    \item $U_{k+2}=U_{k+1}+U_k \text{ for all } k\geq 0$, and
    \item for some $q\geq0$, $U_q=\min\Set{a,b}$ and $U_{q+1}=\max\Set{a,b}$. 
\end{enumerate} 
    If $n\geq a+b$, then $G$ is a hot game and for all sufficiently large $n$, the temperature of the game $G$, denoted by $\te(G)$, is\\   $$\te(G)=
  \begin{cases} 
        b-U_0 & \text{ if }~ \frac{a}{b} < \phi^{-1};\\
        n-a + \frac{U_0-b}{2} & \text{ if }~ \phi^{-1}<\frac{a}{b} <1;\\
        n-a & \text{ if }~ \frac{a}{b} = 1;\\
        n-b+\frac{U_0-a}{2} & \text{ if }~ 1<\frac{a}{b} < \phi;\\
        a-U_0 & \text{ if }~ \phi<\frac{a}{b},\\
  \end{cases}$$
  
  and the mean value of the game $G$, denoted by $\m(G)$, is\\   $$\m(G)=
  \begin{cases} 
        -\left(n-(a+b)+U_0\right) & \text{ if }~ \frac{a}{b} < \phi^{-1};\\
        \frac{U_0-b}{2} & \text{ if }~ \phi^{-1}<\frac{a}{b} <1;\\
        0 & \text{ if }~ \frac{a}{b} = 1;\\
        \frac{a-U_0}{2} & \text{ if }~ 1<\frac{a}{b} < \phi;\\
        n-(a+b)+U_0 & \text{ if }~ \phi<\frac{a}{b}.\\
  \end{cases}$$
\end{theorem}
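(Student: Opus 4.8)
The plan is to compute the thermograph of $G=(n;a,b)$ explicitly by strong induction on $a+b$, with the heap size $n$ taken large (an explicit but unoptimized threshold $N(a,b)$), and then to read off $\te(G)$ and $\m(G)$ from the mast of that thermograph. The Left--Right symmetry $(n;a,b)=-(n;b,a)$ fixes the temperature, negates the mean, and interchanges the regime $a/b<\phi^{-1}$ with $a/b>\phi$ and the regime $\phi^{-1}<a/b<1$ with $1<a/b<\phi$; so it suffices to treat $a\le b$. The base of the induction is the case $a=b$, where I would show directly that $(n;a,a)$ equals the switch $\{\,n-a\mid -(n-a)\,\}$ (hence $\te=n-a$, $\m=0$), together with the smallest values of $a+b$ and the ``bottom'' of the subtractive--Euclidean recursion described below. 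That $G$ is a hot game whenever $n\ge a+b$ is the easy half: one exhibits integers $x<y$ together with strategies showing that Right moving first in $G$ cannot reach more than $x$ while Left moving first can guarantee at least $y$; I would dispose of this first, independently of the thermograph computation.

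For the inductive step I would first record the option structure: for $n\ge a+b$ and $a<b$, using $(m;a',0)=m$ and $(m;0,b')=-m$, Left's moves from $(n;a,b)$ go to $(n-k;a,b-k)$ for $1\le k\le a$ (Left can never zero Right, since $k\le a<b$), while Right's moves go to the fighting positions $(n-k;a-k,b)$ for $1\le k\le a-1$ together with the cash-out move to the integer $-(n-a)$ (which dominates the cruder cash-outs $-(n-a-1),\dots,-(n-b)$). The two crucial claims are then: (i) among Right's options the cash-out $-(n-a)$ dominates every fighting option at all temperatures $t\in[0,\te(G)]$, because each fighting option is again an $a'<b$ position whose thermograph, known by the induction on $a+b$, never sinks as low as $-(n-a)$ once $n$ is large; and (ii) among Left's options the ``take-all'' move to $(n-a;a,b-a)$ dominates the left wall, because its thermograph (again known by induction) has its right wall pinned at a value of order $+n$ throughout $[0,\te(G)]$, whereas every other Left option contributes only $O(1)$ or $O(-n)$. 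Granting (i)--(ii), the thermograph combination rule gives the right wall of $G$ as the line $t\mapsto -(n-a)+t$ and the left-wall contribution of $(n-a;a,b-a)$ as the relevant piece of the claimed boundary; matching the intersection against the claimed apex, and verifying that the remaining Left options lie strictly inside, completes the step.

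The recursion on the wealth pair is exactly the step $(a,b)\mapsto(a,b-a)$ when $a<b$ (and symmetrically $(a,b)\mapsto(a-b,b)$) -- the subtractive Euclidean algorithm -- which is precisely why the Pingala/Fibonacci sequence $(U_k)$ surfaces: unwinding the recursion to its base expresses the corner-values of the thermograph through $U_0$, and the arithmetic identities one must check (for instance, that $(n-a;a,b-a)$ contributes exactly the claimed $\m(G)$, resp. $\m(G)+\te(G)$, to the left wall) collapse, via the defining relation $U_{k+2}=U_{k+1}+U_k$ and $U_0\ge U_1$, to short computations; e.g.\ when $a=U_q,\ b=U_{q+1}$ with $q\ge2$ the pair $(a,b-a)=(U_q,U_{q-1})$ has the \emph{same} sequence $(U_k)$ with index $q-1$. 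The golden-ratio threshold is the parity dichotomy in disguise: since consecutive ratios $U_k/U_{k+1}$ oscillate around $\phi^{-1}$, the index $q$ with $U_q=\min\Set{a,b},\ U_{q+1}=\max\Set{a,b}$ is odd exactly when $a/b<\phi^{-1}$ and even exactly when $\phi^{-1}<a/b<1$, and this parity decides whether the take-all move lands in a ``flat-wall'' regime (temperature growing like $n$) or a ``triangular'' regime (temperature bounded). I would isolate this as a preliminary lemma about Fibonacci-like sequences.

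The hardest part is making the induction honest. Because every option has a strictly smaller heap, I must choose $N(a,b)$ so that each option needing the inductive hypothesis still has an admissible heap (conditions such as $N(a,b)-a\ge N(a,b-a)$), while separately checking that the finitely many genuinely small-heap options -- to which the inductive hypothesis does not apply -- are nonetheless dominated by the $\Theta(n)$-valued take-all and cash-out moves once $n$ is large. I expect the uniform-in-$t$ domination statements (i)--(ii) to be the main obstacle, since they demand control of the \emph{entire} wall (slopes and clipping) of each sub-position in all five regimes, not merely of its mean and temperature.
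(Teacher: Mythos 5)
Your architecture is in substance the paper's own: induct on $a+b$ with $n$ large, identify the two ``orthodox'' options (your cash-out $-(n-a)$ and take-all $(n-a;a,b-a)$ are exactly the paper's {\sc Little John} options), let the subtractive Euclidean recursion on the wealth pair produce the Pingala data and the $\phi$-dichotomy, and read $\te$ and $\m$ off the resulting tent-shaped thermograph (the paper does this through the auxiliary ruleset {\sc Little John}: Lemma~\ref{lem: stop value}, Theorem~\ref{thm: LJthermograph}, then Theorem~\ref{thm:RHthermographs}). Your claim (i) is correct and in fact easier than you make it: every fighting Right option $(n-k;a-k,b)$ with $k<a$ has penalized Left stop at least $-(n-k)>-(n-a)$ simply by the heap-size bound, so the cash-out alone gives the right wall.

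The genuine gap is claim (ii), and the justification you offer for it would not close it. The ``order $+n$ versus $O(1)$ or $O(-n)$'' separation fails in both hot regimes. When $a/b<\phi^{-1}$ (your convention $a<b$), the take-all's right wall is itself of order $-n$, and \emph{every} Left option $(n-k;a,b-k)$ contributes a constant of order $-n$ to the left wall; e.g.\ for $(n;5,9)$ the five contributions are $6-n,7-n,8-n,9-n,11-n$, so which option wins is decided entirely by bounded correction terms governed by the associated Pingala sequences. When $\phi^{-1}<a/b<1$, several non-take-all options can also be of left-single-tent type with vertical right walls at height $n+O(1)$ (for $(n;20,31)$, both $(n-20;20,11)$ and $(n-19;20,12)$ are such, at $n-44$ and $n-47$), so again only $O(1)$ terms decide. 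What is actually needed is a uniform-in-$p$ comparison of the options' walls, which reduces to stop monotonicity across options and across wealth perturbations together with control of the tent shapes; this is precisely the content of the paper's Theorem~\ref{thm: stop monotonicity} (proved by a delicate MP-sequence case analysis using the alternating Pingala identities of Proposition~\ref{prop: new old MPS relation}), Lemma~\ref{lem: osm (option stop monoto)}, and the case analysis of Theorem~\ref{thm:RHthermographs}. You correctly flag this as ``the main obstacle,'' but you supply no mechanism for it, and the mechanism you sketch is false in regime A and incomplete in regime B, so the proposal as it stands does not prove the theorem. A smaller but real slip: $(n;a,a)$ does not equal the switch $\{\,n-a\mid-(n-a)\,\}$ as a game --- the paper records $(4;2,2)=\pm\bigl(2,\{2\mid\pm1\}\bigr)$ --- only its thermograph agrees with that switch, and establishing even this already requires the same option-domination argument you are assuming.
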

\vspace{1 mm}

{\em Hotstrat} is a playing strategy \cite{S2013} that implies `playing in the hottest component in a disjunctive sum of games is an optimal strategy for either player'.\footnote{Hotstrat also specifies the move to be made in the hottest component \cite{S2013}.} Here, by optimal strategy, we mean the alternating play strategy that maximizes the earning of the players and minimizes the loss in terms of `number of moves'. 
% [[A ruleset belongs to {\em hotstrat} if, for all disjunctive sums with positions from the ruleset, playing in a hottest component is optimal play.]] [[How do we define optimal play? Via outcomes, or via game values?]] 
To compute the temperature, a geometric structure, called {\em thermograph} \cite{S2013} (here Definition~\ref{def:thermograph}), is used.
The notions of `urgency' and temperature are not always the same in the sense that playing on the hottest component (hotstrat) is not always an optimal strategy. If a ruleset satisfies hotstrat, then knowing the temperature and the thermograph of a game reveals all the facets of the game. Many {\sc Robin Hood} positions satisfy hotstrat, for instance the position in Figure~\ref{fig:3 days War in the village}. However, {\sc Robin Hood} is even more interesting, as the following theorem shows.
%\todo{}The notions of `urgency' and temperature are not the same in {\sc Robin Hood}.
\begin{theorem}[No Hotstrat]\label{thm:nohotstrat}
There exists a {\sc Robin Hood} disjunctive sum game, with components of distinct temperatures, such that the unique winning  move is in the coolest component.
    %Playing in the hottest component in a disjunctive sum of {\sc Robin Hood} is not necessarily optimal.
\end{theorem}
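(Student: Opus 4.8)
The plan is to prove Theorem~\ref{thm:nohotstrat} constructively, by exhibiting an explicit witness: a disjunctive sum $G=H_1+\dots+H_m$ of {\sc Robin Hood} positions with $m$ small, the $H_i$ being single-heap positions $(n_i;a_i,b_i)$ (a few of them large enough for Theorem~\ref{thm: main theorem} to pin down their temperatures, the rest small enough to compute directly), chosen so that the temperatures $\te(H_1)>\te(H_2)>\dots>\te(H_m)$ are pairwise distinct. One then shows that, with one of the players to move --- say Left, after reflecting the position if necessary --- Left has a \emph{unique} winning move in $G$, and that this move is played in the coolest component $H_m$. Since hotstrat prescribes a move in the hottest component $H_1\neq H_m$, this is the asserted failure.

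The workhorse is the war-phase/conclusion-phase decomposition noted right after the rules: once the war phase has ended in every summand, each continuation of $H_i$ is just an integer --- the difference between the two players' stocks of free moves --- so sums of such continuations are numbers whose outcomes are read off at once. I would compute the outcome of $G$, and of each relevant option, by propagating those integer endgame values back up the game tree. Three things must be checked: (i) the outcome of $G$ is such that the player to move, Left, has a winning move; (ii) in the coolest component $H_m$ there is exactly one Left option $H_m^{L}$ with $H_m^{L}+\sum_{j<m}H_j\ge 0$, identified explicitly (so every other Left option of $H_m$ loses); and (iii) for each hotter component $H_i$ ($i<m$) and each Left option $H_i^{L}$ of $H_i$ --- remove some tokens, reducing Right's wealth accordingly --- one has $H_i^{L}+\sum_{j\neq i}H_j\not\ge 0$, that is, Right (to move) wins.

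For the refutations in (iii) it suffices, for each such option $X$, to name one Right reply $X^{R}\le 0$ and certify it; via the decomposition one drives $X^{R}$ either to numeric form or to a strictly smaller hot sub-sum admitting a short explicit Right strategy. To prevent this from fragmenting into a token-by-token case analysis, I would first establish a monotonicity lemma for {\sc Robin Hood} --- giving a player more heap wealth yields a game that is $\ge$ (resp.\ $\le$) in the partial order --- so that a single inequality, applied at the ``best'' Left option of $H_i$, dominates the whole family $\{H_i^{L}\}$. Run the other way, the same decomposition certifies the positivity required for the winning move in (ii): for each Right reply one exhibits a Left answer reaching a $\ge 0$ position, and since $H_m$ is cool its war phase ends after few moves, so only a bounded tree must be inspected.

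The genuine obstacle is part (iii) --- ruling out \emph{all} moves in the hot components. Each such move again leaves a sum of hot games, so there is no one-line reduction to numbers, and the argument has to be uniform over the many options and robust against Right's later choices; making the monotonicity lemma sharp enough to collapse this to one inequality per hot component is where the real effort goes. As an independent check, the chosen instance can be handed to CGSuite to confirm the temperatures, the overall outcome, and the unique winning move, with the write-up supplying the human-verifiable strategies; what then remains delicate is only to present the example so that every inequality in (ii)--(iii) holds with a comfortable margin.
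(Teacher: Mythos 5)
There is a genuine gap: your text is a plan for a proof, not a proof. The theorem is purely existential, so the burden is to exhibit a concrete sum and verify it, and you never do either. You do not name the components $(n_i;a_i,b_i)$, you do not compute their temperatures or the outcome of the sum, and you explicitly concede that step (iii) --- showing every move in a hotter component loses --- is unresolved ("where the real effort goes"). The auxiliary tools you lean on are also not available as stated: the paper proves stop monotonicity in wealth for {\sc Little John} (Theorem~\ref{thm: stop monotonicity}) and option stop monotonicity (Lemma~\ref{lem: osm (option stop monoto)}), but not a game-value monotonicity ``more wealth $\Rightarrow$ $\ge$ in the partial order,'' so that lemma would itself need a proof before it could collapse the case analysis; and a CGSuite run is a sanity check, not a certificate.

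The actual proof in the paper shows that all this machinery is unnecessary, because the witness can be chosen so that the mover has exactly one option in each component. Take $G_1=(11;1,1)=\{10\mid-10\}$ and $G_2=(12;2,1)=\{11\mid\{10\mid-10\}\}$, so $t(G_1)=10$ and $t(G_2)=1$, and let Right move first in $G_1+G_2$. Right has only two moves in total: moving in the hot component $G_1$ gives $-10+G_2$, and Left replies in $G_2$ to $-10+11=1>0$, so Right loses; moving in the cool component $G_2$ gives $(11;1,1)+(11;1,1)$, which Right wins by mimicry. Uniqueness and the ``coolest component'' claim are then immediate, with no need for monotonicity in the game-value order, no war-phase bookkeeping over many options, and no appeal to Theorem~\ref{thm: main theorem}. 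If you want to salvage your approach, the fastest repair is to replace the open-ended construction by such a rigid example (one legal move per component for the player to move, and a symmetric position after the correct move), at which point your items (i)--(iii) reduce to two lines of computation.
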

We postpone the proof of this result until the end of Section~\ref{sec: organized RH}.
%[[I think we can include the proof here since it is soo easy. It helps the reader to distinguish between the concepts of urgency and heat. However temperature has not yet been defined... It should still be ok.]]

%A ruleset is said to satisfy {\em hotstrat} \cite{WW2004} if, playing in the hottest component in a disjunctive sum of games is a winning strategy for either player. If a ruleset satisfies hotstrat, then knowing the temperature of a game reveals all the facets of the game. Many {\sc Robin Hood} positions satisfy hotstrat, for instance, the position in Figure~\ref{fig:3 days War in the village}. However, {\sc Robin Hood} is more interesting, as the following theorem shows.

%{\color{red} In the main theorem statement, we are not giving any lower bound for the values of "n" for which the theorem follows. Due to that, we cannot use our main theorem to find the temperature of the games over here.}{\color{blue} Agreed.The easy way out is to put this proof as an illustration o fhow to use termographs, at the end of the next section.}

The rest of the paper is organized as follows. 

\begin{itemize} 
% \item In Section~\ref{sec: basicsetup}, we begin with some preliminaries on games that are Numbers, and review the notion of temperature in some detail, using the notions of the Left and Right Stops to compute the temperature, along with the {\it thermograph} of a game. [[Improve writing. Two sentences? or  Remove some?]] 
    \item In Section~\ref{sec:literature} we mention two papers that inspired this work. 
    \item In Section~\ref{sec: basicsetup} we review the notion of temperature and mean value using Left and Right stops, along with thermographs.
    \item In Section~\ref{sec: organized RH} we present some overarching facts about {\sc Robin Hood}, and we provide an example where hotstrat fails.
    \item In Section~\ref{sec:RH meets Little John}, we see the connection between Pingala sequences and {\sc Robin hood}.
    \item  Section~\ref{sec:ping} collects some relevant facts about Pingala sequences and its generalizations. 
    \item In Section~\ref{sec:LJ}, we study a simpler game that we dub {\sc Little John}.
    \item In Section~\ref{sec:stops} we show that for large heaps, the Left and Right stops for {\sc Robin Hood} and {\sc Little John} are the same, and we also study the thermographs of {\sc Little John}.
    \item In Section~\ref{sec:main}, we prove the main result, Theorem~\ref{thm: main theorem}, by first justifying that the thermographs of {\sc Little John} and {\sc Robin Hood} are the same for large heap sizes.
\end{itemize}%%%%%%%%%%%%%%%%%%%%%%%%%%%%%%%%%%%%%%%%%%%%%%%%%%%%%%%%%%%%%%%%%%%%%%%%%%%%%%%%%%%%%%%%%%%%%%%%%%%%%%%%%%%%%%%%%%%%%%%%%%%%%%%%%%%%%%%%%%%%%%%%%%%%%%%%%%%%%%%%%%%%%%%%%%%%%%%%%%%%%%%%%%%%%%%%%%%%%%%%%%\item In the Appendix, we review some standard CGT results.%The hotstrat result is proved in...

\section{Literature review}\label{sec:literature}
%\todo{introduce Euclid game here.}
%\ur{This is from the lecture notes. The proof is not required. But the reference is. And the idea should be explained even if we do not prove it.}
Our ruleset has an, at first surprising but after a while fairly obvious, resemblance of the impartial ruleset {\sc Euclid}, which is played on two non-empty heaps of pebbles. A player must remove a multiple of the size of the smaller heap from the larger heap. A position is represented by a pair of positive integers $(x,y)$, where say $x\le y$. Note that if $x=y$, then the position is terminal. Example: $(2,7)\rightarrow (2,3)\rightarrow (1,2)\rightarrow (1,1)$. Since we put the requirement that (both) heaps remain non-empty, then no more move is possible. Note that the losing moves are forced. 
 
Optimal play reduces to minimizing the relative distance of the heaps. %Recall the golden section, $\phi=\frac{1+\sqrt{5}}{2}$.

\begin{theorem}[\cite{cole1969game}]\label{thm:euclid}
A player wins {\sc Euclid} if and only if they can remove a multiple of the smaller heap such that the ratio of the heap sizes $(x,y)$, satisfies $1\le y/x<\phi$. 
\end{theorem}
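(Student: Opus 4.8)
The plan is to classify the positions of {\sc Euclid} into $P$-positions (a loss for the player to move) and $N$-positions (a win), and to prove by backward induction that a position $(x,y)$ with $x\le y$ is a $P$-position if and only if $y/x<\phi$, where the terminal positions $(x,x)$, of ratio $1<\phi$, are counted among the $P$-positions. Granting this, Theorem~\ref{thm:euclid} follows at once: the player to move wins precisely when the position is an $N$-position, and this is precisely when some legal move reaches a position of ratio in $[1,\phi)$ --- the bound $y/x\ge 1$ being automatic for any heap pair, while the bound $y/x<\phi$ is exactly what characterizes the $P$-positions reached.

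Two elementary facts about $\phi$ drive the argument: $\phi^{-1}=\phi-1$ and $1<\phi<2$. The characterization then reduces, via the standard backward induction on the total heap size $x+y$ (which strictly decreases with every move), to two claims. \emph{Claim A}: if $1<y/x<\phi$, then every legal move leads to a position of ratio $\ge\phi$. Here $y/x<\phi<2$ forces $y<2x$, so the only legal move is $y\mapsto y-x$ (and $y-x\ge 1$ since $y>x\ge 1$); writing $r:=y/x$, the new ratio is $x/(y-x)=1/(r-1)$, which exceeds $\phi$ since $r-1<\phi-1=\phi^{-1}$. \emph{Claim B}: if $y/x\ge\phi$, then some legal move reaches a position of ratio in $[1,\phi)$. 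Writing $y=qx+s$ with $q\ge 1$ and $0\le s<x$: if $s=0$ then $q\ge 2$ and removing $(q-1)x$ yields $(x,x)$, of ratio $1$; if $s\ge 1$ and $s>x/\phi$, removing $qx$ yields $(s,x)$, of ratio $x/s<\phi$; and if $s\ge 1$ and $s<x/\phi$, then $q\ge 2$ (else $y/x=1+s/x<1+\phi^{-1}=\phi$), so removing $(q-1)x$ yields $(x,x+s)$, of ratio $1+s/x<\phi$. Since $x/\phi$ is irrational it never equals the integer $s$, so the trichotomy is exhaustive.

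Claim A says $P$-positions move only to $N$-positions, Claim B says $N$-positions (never terminal, since ratio $1<\phi$) move to some $P$-position, and terminal positions $(x,x)$ have ratio $1$ and hence belong to the $P$-positions; together these pin down the $P$-positions as exactly $\{(x,y):x\le y,\ y/x<\phi\}$, which is Theorem~\ref{thm:euclid}. I expect the only delicate part to be the bookkeeping in Claim B: one must confirm in each of the three sub-cases that the prescribed number of tokens removed is a positive multiple of the smaller heap and that the larger heap remains non-empty afterwards, and that together with the irrationality of $\phi$ the sub-cases cover every position with $y/x\ge\phi$. The remaining verifications are routine.
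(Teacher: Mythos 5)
Your argument is correct. Note that the paper does not prove this statement at all: it is quoted from Cole and Davie \cite{cole1969game} as background for the analogy with {\sc Robin Hood}, so there is no in-paper proof to compare against. Your proposal supplies the standard (and essentially the classical) argument: identify the candidate $P$-positions as $\{(x,y): x\le y,\ y/x<\phi\}$ and run the usual induction on $x+y$, checking that terminal positions $(x,x)$ lie in the set, that every move from the set leaves it (your Claim A, where $y<2x$ forces the unique move and $1/(r-1)>\phi$ because $r-1<\phi^{-1}=\phi-1$), and that every position outside the set has a move into it (your Claim B, with the three sub-cases on $y=qx+s$ and the irrationality of $x/\phi$ ruling out the boundary case). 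All the side conditions you flag as delicate (positivity of the multiple removed, non-emptiness of the remaining heap, exhaustiveness of the trichotomy) do check out, and the translation from the $P$/$N$-classification to the statement ``the mover wins iff some removal reaches ratio in $[1,\phi)$'' is immediate. So your write-up can stand as a self-contained justification of the cited theorem.
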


% In {\sc Robin Hood}, there is no option to remove a non-trivial multiple of the other player's wealth, but since the losing player in {\sc Euclid} is forced to remove exactly the smaller heap size, the removal recurrences become the same as on the player wealths.

In {\sc Robin Hood}, the player with lesser wealth cannot remove a non-trivial multiple of their own wealth. Similarly, in {\sc Euclid}, for positions $(x,y)$ where $y/x<\phi$, players are restricted from removing non-trivial multiples. Consequently, the removal recurrences are identical in both scenarios.

%\begin{proof}
%Suppose that the current player is in a position of the form $(a,b)$ with $b/a>\phi$. We must prove that they can find a move to a position $(c,d)$ of the form 
%\begin{align}\label{eq:cdphi}
%1\le d/c <\phi. 
%\end{align}

%We claim that there is a positive integer $k$ such that either 
%\begin{itemize}
%\item $(c,d) = (b-ka,a)$, or
%\item $(c,d)=(a,b-(k-1)a)$
%\end{itemize}
%satisfies the desired inequality \eqref{eq:cdphi}. If $1\le \frac{a}{b-ka}<\phi$, we are done, so suppose that 
 
%\begin{align}\label{eq:gephi}
%\frac{a}{b-ka}>\phi.
%\end{align}
%Then $\frac{b-ka}{a}<\phi^{-1}$. And so 
%\begin{align*}
%\frac{b-(k-1)a}{a}&=\frac{b-ka}{a}+1\\
%&<\phi^{-1}+1\\
%&=\phi. 
%\end{align*}
%Namely, if $c=b-ka<\varphi a$, then we claim that $\varphi (b-(k-1)a)\le a=c$. 
%That is, if $(c,d) = (b-ka,a)$ does not satisfy the inequality, play instead $(c,d)=(a,b-(k-1)a)$. This strategy suffices since,
%\begin{align*}
%\varphi b&<\varphi(\varphi+k)a\\
%&=\varphi^2+\varphi(k-1)a+\varphi a\\
%&\le \varphi^2a+\varphi(k-1)a+\varphi a\\
%&= \varphi(\varphi a+k).
%\end{align*}
%since $a\ge 1$.

%Suppose next that the current player is in a position of the form $(a,b)$, with $1\le b/a<\phi $. Then there is only one move option, namely $(b-a,a)$, and it follows that 
%\begin{align*}
%\frac{b-a}{a}&=\frac{b}{a}-1\\
%&<\phi-1\\
%&=\phi^{-1}. 
%\end{align*}
%And hence, $\frac{a}{b-a}>\phi$.
%$\frac{a}{b-a}>\frac{a+b}{b}=a/b +1> \phi$.% $\phi<\frac{a}{b-a}$. %Note that $b\varphi\le a$ implies $b\varphi< a$, since $\varphi$ is irrational. Therefore, $b-a < \varphi^{-1} a-a=\varphi a$. 
%\end{proof}

Cumulative Games were defined in a broad sense in the preprint \cite{URYCG2020}. The purpose of that monograph is to explore an intersection of classical game theory with combinatorial game theory. The ruleset {\sc Wealth Nim} a.k.a. {\sc Wealth Pebbles} is introduced as an example where player cumulations are part of the rules of how to move, but do not contribute to the payoffs that the players gain when the game ends. Since the purpose of \cite{URYCG2020} is to provide a broad framework for further study, no efforts were made to solve proposed individual games and rulesets. The current paper is among the first ones to do so.

%%%%%%%%%%%%%%%%%%%%%%%%%%%%%%%%%%%%%%%%%%%%%%%%%%%%%%%%%%%%%%%%%%%%%%%%%%%%%%%%%%%%%%%%%%%%%%%%%%%%%%%%%%%%%%%%%%%%%%%%%%%%%%%%%%%%%%%%%%%%%%%%%%%%%%%%%%%%%%%%%%%%%%%%%%%%%%%%%%%%%%%%%%%%%%%%%%%%%%%%%%

\section{Some basics}\label{sec: basicsetup}

In order to establish the foundation for proving our main results regarding temperatures and mean values, we revisit the concept of a `thermograph'. Understanding thermographs requires familiarity with several key terminologies from CGT. We will not discuss standard CGT concepts, such as game comparison and canonical forms, while they are less critical in this context, and these topics are well-covered in the existing literature (e.g., \cites{BCG2004,S2013}). We begin by defining the pivotal concept of a Number game. %which will serve as the foundation for the discussion that follows.Standard CGT results and terminology, such as game comparison and canonical forms that we do not review here are covered in standard literature \cite{BCG2004,S2013}. %the Appendix~\ref{sec:appendix: Basics of CGT}. 
%Here, we start by defining Number games. 

Intuitively, a game is called a `Number' if each player prefers that the other player starts. 
Recall that a {\em sub-position} of a game can be the game itself or any option of the game or any option of options, etc.

\begin{definition}[Numbers]\label{def: Number}
     A short game $x$ is a Number if, in the canonical form of $x$, every sub-position $y$ satisfies $y^L<y^R$ for all $y^L$ and $y^R$.\footnote{The definition of a Number in \cite{S2013} does not hold for $\{*\mid*\}$ and many other literal form games that equal some canonical form Number.}
\end{definition}

As usual, $\varnothing$ denotes the empty set (of options). Every game is associated with a Number game in the following sense. %a game with no option for either player.              
% For all $m\in \Z_{>0}$, we define the {\em integer games} $[ m]=\left\{\;[m-1] \mid \varnothing\right\}$, $ [-m] = \left\{\varnothing\mid  [-m+1]\;\right\} $ recursively with the base element being $[0]=\{\varnothing \mid \varnothing \}$.

\begin{definition}[Mean Value]
Consider a short game $G$. Its mean value $\m(G)$ is the Number such that, for any positive dyadic $\epsilon$, for all sufficiently large $n$, $n\cdot m(G)-\epsilon\le n\cdot G\le n\cdot m(G)+\epsilon$.
\end{definition}
In \cite{S2013}*{Theorem 3.23} it is proved that every game has a mean value.

The most basic Number games are as follows: For all \( k \in \mathbb{Z}_{>0} \), we define the \emph{integer games} \( k \) and \( -k \) recursively as: 
\begin{itemize}
    \item $k = \left\{ k-1 \mid \varnothing \right\}$;
    \item $-k = \left\{ \varnothing \mid -k+1 \right\}$,
\end{itemize}
where $0 = \{\varnothing \mid \varnothing \}$. 

For all odd $k\in \Z$ and $n\in \Nat$, we define the {\em dyadic rational games} recursively as: $$ \frac{k}{2^n}=\left\{ \left[\frac{k-1}{2^n}\right] \mid \left[\frac{k+1}{2^n}\right] \right\},$$ 
where the brackets denote the reduction of the fraction such that the numerator is odd, unless $0$. For example, with $k=1$ and $n=3$, the game $1/8=\{0\mid 1/4\}$.
%with the base case being $0$.
 
By {\cite{S2013}*{Proposition 3.5}}, integer and dyadic rational games follow the standard arithmetic properties. %the corresponding numbers in the usual arithmetic sense. 
For instance, the disjunctive sum of the games $1$ and $\frac{1}{2}$ equals the game $1+\frac{1}{2}=\frac{3}{2}$. 
%Thus, we shall abuse notation and simply replace $[m]$ by $m$ and $\left[\frac{m}{2^n}\right]$ by $\frac{m}{2^n}$.

The games $0$ and $1$ are vacuously Numbers and thus, all integers are also Numbers. Similarly, $\frac{1}{2} = \left\{0 \mid 1\right\}$ is a Number, since $0<1$. The game $\{*\mid *\}$ is also a Number since its canonical form is $0$, which is a Number. 
%{\color{red} Why is this here? After all you have said more in the previous paragraph?}

Henceforth, we shall call all integers and dyadic rationals simply as {\em dyadics}. Let the set of dyadics is denoted by $\mathbb{D} = \Set{ \frac{k}{2^n} \SetSymbol k\in \Z,\; n\in \Nat_0}$. Since our definition of ``Number'' is not the same as in most textbooks, we include a proof of the consistency of terminology.
\begin{theorem}\label{thm: dyadic bijection with Numbers}
    The set of dyadics 
    %$\left\{\frac{m}{2^n} \SetSymbol m\in \Z \text{, }n\in \Nat \cup \Set{0}\right\} $ 
    has a bijective relation with the set of canonical form Numbers.
\end{theorem}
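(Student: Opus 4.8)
The plan is to establish the bijection by exhibiting two mutually inverse maps. In one direction, every dyadic rational $d = k/2^n$ (in lowest terms) is sent to the game defined recursively in the text: integers $k$ go to $\{k-1 \mid \varnothing\}$ and $-k$ to $\{\varnothing \mid -k+1\}$, while $k/2^n$ with $k$ odd goes to $\{[(k-1)/2^n] \mid [(k+1)/2^n]\}$. I would first verify by induction on $n$ (the ``birthday'' / denominator complexity) that each such literal form is already in canonical form: one checks there are no reversible moves and no dominated options, using that $[(k-1)/2^n] < d < [(k+1)/2^n]$ and that these are the dyadics of strictly smaller complexity that are closest to $d$ from each side. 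This simultaneously shows each such game is a Number in the sense of Definition~\ref{def: Number}, since at every sub-position the unique Left option is strictly less than the unique Right option (or one side is empty).

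In the other direction, I would argue that every canonical-form Number $x$ equals one of these dyadic games. Proceed by induction on the birthday of $x$. Since $x$ is a Number, every $x^L < x^R$; by the induction hypothesis every option is (equal to, hence being canonical, literally) one of our dyadic games, so $x = \{x^L \mid x^R\}$ with $x^L, x^R \in \mathbb{D}$ and $\max x^L < \min x^R$ (interpreting empty sets as $-\infty$, $+\infty$). The Simplicity Theorem (a standard CGT fact; it is implicit in \cite{S2013}*{Proposition 3.5} and the surrounding development) then tells us $x$ equals the simplest dyadic $d$ strictly between $\max x^L$ and $\min x^R$. Because $x$ is in canonical form, no option is dominated or reversible, which forces $x^L = \{[(k-1)/2^n]\}$ or $\varnothing$ and $x^R = \{[(k+1)/2^n]\}$ or $\varnothing$ for the appropriate $k/2^n = d$; that is, $x$ is literally the dyadic game attached to $d$. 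Hence the two maps are inverse to each other, giving the bijection.

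The main obstacle I anticipate is the bookkeeping around edge cases where one option set is empty — the integer games $k = \{k-1 \mid \varnothing\}$ — and making precise, within the paper's slightly nonstandard Definition~\ref{def: Number}, why no nonempty-optioned canonical Number can represent an integer and vice versa. This amounts to checking that $\{k-1\mid\varnothing\}$ is canonical (the move $k \to k-1$ is not reversible because $k-1$ has no Right option to return at or below $k$) and that any canonical form with an empty Right option must, by the Simplicity Theorem applied with $\min x^R = +\infty$, be the least integer exceeding $\max x^L$. The recursive/dyadic arithmetic facts cited from \cite{S2013}*{Proposition 3.5} let me avoid re-deriving the Simplicity Theorem from scratch, so the argument stays short: it is essentially two inductions on birthday plus an appeal to simplicity.
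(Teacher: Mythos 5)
Your argument is correct and follows essentially the same route as the paper's proof: show that the literal dyadic forms are canonical-form Numbers (integers vacuously, non-integers since the unique Left option is less than the unique Right option and neither domination nor reversibility applies), and conversely, by induction on the options of a canonical-form Number together with domination and the Simplicity Theorem, that every Number equals a dyadic. Your extra bookkeeping about the empty-option (integer) cases and about the canonical form being literally the standard dyadic game only makes explicit what the paper leaves implicit.
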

\begin{proof}
% The equivalence classes are represented by its canonical forms. 
    All integers are vacuously Numbers, and they are in canonical form. Next, we show that every non-integer dyadic $\frac{k}{2^n}$ is a canonical form Number. First note that $\frac{k-1}{2^n}<\frac{k+1}{2^n}$, and so it is a Number. Moreover, this game cannot be reduced (indeed, domination does not apply, and it does not reverse out). %if $\frac{k}{2^n}$ equals other number games, then by game equivalence, all are the same. 
    \iffalse i.e., $\frac{k+1}{2^n}-\frac{k-1}{2^n}$ is a Left winning game. The dyadic $\frac{k+1}{2^n}-\frac{k-1}{2^n}$ is $\frac{1}{2^{n-1}}>0$. Hence, it is a Left winning game and we are done.\fi

    Now, we will show that any literal form Number $x$ equals a dyadic. Let $y$ be the canonical form of $x$.  %$\frac{m}{2^n}$ for some $m\in \Z$ and $n\in \Nat$. 
    By induction, every $y^L$ and $y^R$ is dyadic. Now, by domination, $y$ will have only one left and one right option, (say) $y^L$ and $y^R$. By the simplicity theorem, $y$ is the simplest dyadic between $y^L$ and $y^R$.   
\end{proof}

\par A Number game can also be interpreted as the number of `free moves' available for Left (if positive) or Right (if negative). This raises the question of the maximum number of free moves a player is guaranteed in alternating play in a game. In CGT, the maximum number of free moves Left is guaranteed in alternating play, when starting a game, is called the Left stop of the game (this may be negative, if so, it is the minimum guaranteed loss for Left). Similarly, the negative of the maximum number of free moves Right is guaranteed when starting the game is called the Right stop of the game (this may be positive). %The definition of the Left and Right stop is given in Appendix~\ref{def: Left and Right stops}.
%the largest Number at which the play of a game is stopped when Left starts the game is called the Left stop. The Right stop is defined in similar fashion. 
%Also recall that neither player gains moves by playing in a Number. The two statements together raises the question of maximum number of moves earned by players. Left stop of a game is the largest Number at which the play of the game is stopped in alternating play setup when Left starts the game and both players play optimally and similarly, Right stop is smallest such number when Right starts the game. In other words, 

%\par Numbers as games are `cold' in the sense that it is undesirable to play first in a Number game unless there is no alternative (see \cite[Number Avoidance Theorem~$3.22$]{S2013}). Therefore, when a play of a game reaches a number, it can be stopped as neither player wants to play further. players can stop playing further and find the result of the play just by looking at the number. This introduces the concept of Left and Right stops. The Left stop of a game is the largest number at which the play of the game is stopped in alternating play setup when Left starts the game and both players play optimally and similarly, Right stop is smallest such number en Right starts the game.
% we cannot say The maximum number eft can reach because the Left stop can be a number that Right has reached.

\begin{definition}[Stops]\label{def: Left and Right stops}
For a game $G$, the Left stop $\Ls(G)$ and the Right stop $\Rs(G)$ are defined as:
\begin{align*}
    \Ls(G)&=\begin{cases}
          x & \text{ if } G \text{ equals a dyadic } x;\\
          \max_{G^L} \left(\Rs(G^L)\right) & \text{ otherwise;}
    \end{cases}\\
    \Rs(G)&=\begin{cases}
          x & \text{ if } G \text{ equals a dyadic } x;\\
          \min_{G^R} \left(\Ls(G^R)\right) & \text{ otherwise.}
    \end{cases}   
\end{align*}
\end{definition}
The stops of a game $G$ is the ordered pair $s(G)=(\Ls(G),\Rs(G))$. 
%\ur{Should we have equal a dyadic or is a dyadic? I think it gives the same thing.}\an{$G$ might not be in the canonical form, so '$G$ equals a dyadic $x$' is correct}\ur{Are you sure? For example the game $G=*|*$, after minmax we have 0 anyway, so why not continue until G is a dyadic? By the way, whenever we write "is" then this should mean literal form, for otherwise we should say equals.}\an{I see, I took equals as the equality of games, which means $*|*$ equals $0$. I think before we say anything here, we must describe what does `is' mean and what does `equals' mean. For me, `is' is more precise than`equals', as `equals' is a mathematical notion, so mathematically $G$ equals $x$.}
%{\color{red} I think to avoid confusion everytime, we should say that now onwards all the games will be considered in the canonical form.}{\color{blue} I am not sure if that is practical. What if you are interested in the stops of a game position from a ruleset? They don't come in canonical form.}{\color{red} You mean a game position that is either dominated or reversible.. but it can be written in the form of (n;a,b)}{\color{blue} so...}

\begin{remark}\label{obs: characterization of non-number game}
    If the Left and Right stops of a game $G$ are not the same, then $G$ does not equal a dyadic. %number.
\end{remark}

Now we see a few results about the stops from \cite{S2013}.

\begin{proposition}[{\cite{S2013}*{Proposition~3.17}}]\label{prop: relation between stop and game}
    Let $G$ be a game and let $x$ be a number. Then,
    \begin{enumerate}
    \item\label{item:prop: relation between stop and game:1} $\Ls(-G) = -\Rs(G)$ and $\Rs(-G) = -\Ls(G)$;
    \item\label{item:prop: relation between stop and game:2} if $G\geq x$, then $\Ls(G)\geq \Rs(G)\geq x$. Likewise if $G\leq x$ then, $\Rs(G)\leq \Ls(G)\leq x$.
    \end{enumerate}
\end{proposition}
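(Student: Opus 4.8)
\noindent\textit{Proof proposal.}
For the first part the plan is a straightforward structural induction on $G$. If $G$ equals a dyadic $x$, then $-G$ equals the dyadic $-x$, and all of $\Ls(-G),\Rs(-G),-\Rs(G),-\Ls(G)$ equal $-x$. If $G$ does not equal a dyadic, then neither does $-G$, so both sides of each identity are evaluated by the recursive clause of Definition~\ref{def: Left and Right stops}; using that the left options of $-G$ are precisely the games $-G^R$ (and dually the right options), that $-\min_i y_i=\max_i(-y_i)$, and the induction hypothesis applied to the options, one gets $\Ls(-G)=\max_{G^R}\Rs(-G^R)=\max_{G^R}(-\Ls(G^R))=-\min_{G^R}\Ls(G^R)=-\Rs(G)$, and symmetrically $\Rs(-G)=-\Ls(G)$.

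For the second part, the ``likewise'' half reduces to the first half: if $G\le x$ then $-G\ge -x$, so the first half applied to $-G$ and the number $-x$ gives $\Ls(-G)\ge\Rs(-G)\ge -x$, which by Part~(1) is exactly $\Rs(G)\le\Ls(G)\le x$. So I would focus on showing, for a game $G$ and a number $x$ with $G\ge x$, that $\Ls(G)\ge\Rs(G)$ and $\Rs(G)\ge x$. Both would come from the following four implications, valid for every game $H$ and number $y$ (taken in canonical form): (A) $H\ge y\Rightarrow\Rs(H)\ge y$; (B) $H\not\le y\Rightarrow\Ls(H)\ge y$; (C) $H\le y\Rightarrow\Ls(H)\le y$; (D) $H\not\ge y\Rightarrow\Rs(H)\le y$. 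Implications (C) and (D) are the images of (A) and (B) under $H\mapsto -H$, $y\mapsto -y$ via Part~(1), so I only need (A) and (B), which I would prove together by induction on the sum of the birthdays of $H$ and $y$. The base case is $H$ a dyadic, where everything reduces to the total order on $\mathbb D$. In the inductive step, for (A): the recursive test for $\ge$ (no right option of $H$ is $\le y$ and no left option of $y$ is $\ge H$) shows $H\ge y$ forces $H^R\not\le y$ for every right option $H^R$, so $\Ls(H^R)\ge y$ by (B) applied to $(H^R,y)$, hence $\Rs(H)=\min_{H^R}\Ls(H^R)\ge y$. For (B): the same test shows $H\not\le y$ means either some left option has $H^L\ge y$, in which case $\Ls(H)\ge\Rs(H^L)\ge y$ by (A) applied to $(H^L,y)$; or some right option $y^R$ of the \emph{number} $y$ has $y^R\le H$, in which case $H\ge y^R>y$ and, $H$ not being a number, $H\ne y^R$, so $H\not\le y^R$ and $\Ls(H)\ge y^R>y$ by (B) applied to $(H,y^R)$ --- legitimate because the birthday of $y^R$ is strictly smaller than that of $y$. (This last branch is essentially the Number Avoidance Theorem.)

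Granting (A)--(D), the inequality $\Ls(G)\ge\Rs(G)$ follows by a squeeze: if instead $\Ls(G)<\Rs(G)$, pick a dyadic $t$ with $\Ls(G)<t<\Rs(G)$; from $\Ls(G)<t$ the contrapositive of (B) gives $G\le t$, and from $\Rs(G)>t$ the contrapositive of (D) gives $G\ge t$, so $G=t$ is a dyadic and hence $\Ls(G)=\Rs(G)=t$, contradicting $\Ls(G)<t<\Rs(G)$. Combining this with (A), any $G\ge x$ satisfies $\Ls(G)\ge\Rs(G)\ge x$, which is Part~(2).

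I expect the only routine steps to be Part~(1) and the base cases. The delicate point is the simultaneous induction for (A)--(B): it does not suffice to carry only the two ``$\ge$'' implications, since their ``fuzzy'' companions (B) and (D) are precisely what both the induction and the squeeze for $\Ls(G)\ge\Rs(G)$ consume; moreover one must separately treat the branch of (B) in which $H\not\le y$ is caused only by a right option of the \emph{number} $y$, which is why the induction runs on the combined birthday of $H$ and $y$ rather than that of $H$ alone. A minor but genuine nuisance throughout is that the base case ``$H$ equals a dyadic'' is a semantic, not structural, hypothesis, so one must check $H$ does not equal a dyadic before invoking the recursive clause of Definition~\ref{def: Left and Right stops}. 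Somewhat against first intuition, the most innocuous-looking conclusion, $\Ls(G)\ge\Rs(G)$, is the one that resists a direct induction and needs the indirect squeeze argument.
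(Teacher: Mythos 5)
Your proposal is correct. The paper does not prove this statement at all --- it is quoted from Siegel's book --- and your argument (part~(1) by structural induction, part~(2) via the four implications (A)--(D) proved by simultaneous induction on the combined birthday, followed by the squeeze giving $\Ls(G)\geq\Rs(G)$) is essentially the standard textbook proof that the citation points to, so there is nothing in the paper to contrast it with.
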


\begin{proposition}[{\cite{S2013}*{Proposition~3.18}}]\label{prop: Lstop>Rstop}
    Let $G$ be a short game and let $x$ be any dyadic. Then, \begin{enumerate}
        \item $\Ls(G)\geq \Rs(G)$;
        \item $\Ls(G+x)=\Ls(G)+x$ and $\Rs(G+x)=\Rs(G)+x$.
    \end{enumerate}
\end{proposition}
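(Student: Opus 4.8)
The plan is to establish part~(1) first, since part~(2) leans on it. (This is \cite{S2013}*{Proposition~3.18}, so the plan below just reconstructs the argument.)

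\emph{Part (1).} Every short game $G$ satisfies $G\le N$ for some integer $N$; one may take $N$ to be the formal birthday of $G$ (the standard fact that a game is bounded by its birthday). Since $N$ is a number, part~\ref{item:prop: relation between stop and game:2} of Proposition~\ref{prop: relation between stop and game} applies with $x=N$ and yields $\Rs(G)\le\Ls(G)\le N$; the inequality $\Rs(G)\le\Ls(G)$ is precisely~(1). In words, the Right stop never exceeds the Left stop.

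\emph{Part (2), setup.} I would prove the two identities $\Ls(G+x)=\Ls(G)+x$ and $\Rs(G+x)=\Rs(G)+x$ \emph{simultaneously}, by formal induction on $b(G)+b(x)$ (where $b$ denotes formal birthday and $x$ ranges over dyadic rationals); the two must be handled in tandem because the recursion for $\Ls$ in Definition~\ref{def: Left and Right stops} invokes $\Rs$ of options and conversely. Applying the statement to $-G$ and $-x$ and using $\Ls(-H)=-\Rs(H)$, $\Rs(-H)=-\Ls(H)$ (part~\ref{item:prop: relation between stop and game:1} of Proposition~\ref{prop: relation between stop and game}) simply interchanges the two identities, so it suffices to treat $x\ge 0$. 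The base cases: $x=0$ is trivial, and if $G$ equals a dyadic $d$ then $G+x$ equals the dyadic $d+x$ by the arithmetic of dyadic games (\cite{S2013}*{Proposition~3.5}), so $\Ls(G+x)=\Rs(G+x)=d+x=\Ls(G)+x$. In the inductive step $G$ does not equal a dyadic, and consequently has options on both sides (a game with an empty option set on one side equals an integer); also $x>0$.

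\emph{Part (2), inductive step.} If $x=k\in\Nat$, then $k=\{\,k-1\mid\varnothing\,\}$, so the Left options of $G+k$ are the $G^L+k$ together with $G+(k-1)$, and its Right options are exactly the $G^R+k$. The induction hypothesis (applicable because $b(G^L)+b(k)$, $b(G^R)+b(k)$ and $b(G)+b(k-1)$ are all below $b(G)+b(k)$) gives
$$\Rs(G+k)=\min_{G^R}\bigl(\Ls(G^R)+k\bigr)=\Rs(G)+k,\qquad \Ls(G+k)=\max\Bigl(\max_{G^L}\Rs(G^L)+k,\ \Rs(G)+k-1\Bigr),$$
and by part~(1) we have $\Rs(G)+k-1<\Ls(G)+k$, so the latter maximum equals $\Ls(G)+k$. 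If instead $x=k/2^n$ with $k$ odd and $n\ge 1$, then $x=\{\,x^L\mid x^R\,\}$ with $x^L<x<x^R$ dyadic rationals of strictly smaller birthday; the Left options of $G+x$ are the $G^L+x$ and $G+x^L$, the Right options the $G^R+x$ and $G+x^R$, and the same computation produces
$$\Ls(G+x)=\max\bigl(\Ls(G)+x,\ \Rs(G)+x^L\bigr),\qquad \Rs(G+x)=\min\bigl(\Rs(G)+x,\ \Ls(G)+x^R\bigr).$$
Since $\Rs(G)\le\Ls(G)$ (part~(1)) and $x^L<x<x^R$, the terms $\Rs(G)+x^L$ and $\Ls(G)+x^R$ are dominated, so $\Ls(G+x)=\Ls(G)+x$ and $\Rs(G+x)=\Rs(G)+x$, closing the induction.

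The one genuinely conceptual point — everything else being bookkeeping — is this last step: one must see that a move inside the ``cold'' summand $x$ is never worth making, and the fact that guarantees it is exactly part~(1), namely $\Rs(G)\le\Ls(G)$, together with $x^L<x<x^R$. The remaining care is routine: reading off the option sets of $G+x$ correctly in the integer versus non-integer-dyadic cases, verifying that birthdays strictly decrease so that the induction hypothesis is available, and absorbing the degenerate one-sided (hence integer, hence dyadic) positions into the base case.
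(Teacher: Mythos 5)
The paper does not actually prove this proposition — it is imported verbatim from Siegel (Proposition 3.18) and used as a black box — so there is no in-paper argument to compare yours against. Your reconstruction is correct: the birthday induction, the case split between integer and non-integer dyadic $x$, and the key observation that the moves $G+x^L$ and $G+x^R$ inside the number component are dominated precisely because $\Rs(G)\le\Ls(G)$ and $x^L<x<x^R$, are exactly the standard argument, and the bookkeeping (option sets of $G+x$, strict decrease of $b(G)+b(x)$, reduction to $x\ge 0$ via Proposition~\ref{prop: relation between stop and game}(\ref{item:prop: relation between stop and game:1})) checks out.

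Two small caveats, neither fatal. First, your part (1) is immediate only because the paper's restatement of Proposition~\ref{prop: relation between stop and game}(\ref{item:prop: relation between stop and game:2}) already carries $\Ls(G)\ge\Rs(G)$ inside its conclusion; within this paper that is a legitimate citation, but as a self-contained proof it borrows the inequality rather than establishing it (in Siegel's own development this inequality \emph{is} essentially 3.18(a), proved by comparing $G$ with a number strictly between the putative stops and deriving a contradiction), so be aware the shortcut works only relative to the strengthened form of 3.17 quoted here. Second, the parenthetical claim that a game with an empty option set on one side equals an integer is true but not free: it needs the simplicity rule (or a short induction), and it is what guarantees that in your inductive step the recursions $\Ls(G)=\max_{G^L}\Rs(G^L)$ and $\Rs(G)=\min_{G^R}\Ls(G^R)$, and likewise those for $G+x$, range over nonempty sets; you should either cite it or note that the paper's Definition~\ref{def: Left and Right stops} already presupposes it for well-definedness.
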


% Let $G$ be a short combinatorial game. Then, the maximum number of free moves Left can earn by starting in a game $G$, when both the players are playing wisely/optimally, is called the Left stop, denoted by $\Ls(G)$. Well the stop can also be a dyadic rational as earning $\frac{1}{2}$ move means playing on two copies of this game leads to earn $1$ move in total. Similarly, the Right stop, denoted by $\Rs(G)$, is negative of maximum number of free moves he can earn when he starts on $G$. This clearly says that a bigger number is good for Left and a smaller number os good for Right.\\

When we analyze games in terms of the stops, we momentarily stop thinking about winning, while rather emphasizing the stops. Sometimes we abuse language and instead of ``stops'' say {\em scores} (the loot of war). This terminology would be consistent with Milnor's positional games \cite{M1953}, where his `scoring  functions' correspond to normal play stops.\footnote{It turns out that the normal play reduced canonical forms \cite{GS2009} correspond to Milnor's positional games \cite{M1953}.}

In a disjunctive sum of games, if the first player can guarantee a higher total score by playing on a particular component, in comparison to the other components, then that component is considered `urgent'. For example, in the sum $\{4\mid -5\}+\{1\mid -2\}$ the game $\{4\mid -5\}$ is urgent compared to $\{1\mid -2\}$. 

To numerically estimate this notion of urgency, we recursively apply equal penalties to both players. The minimum penalty at which the Left and Right stops of a game become equal (i.e., the game is no longer urgent) provides an estimate of the urgency. Note that this estimate does not depend on any other components. Let $\mathbb{D}^+$ denote the set of non-negative dyadics.

%As an estimate of this notion of urgency, that does not depend on any other components, we penalize both players recursively with the same amount, until the penalized game becomes equal to a dyadic. 
%{\color{red} I think dyadic is incorrect, it should be dyadic+infinitesimal. Also, this needs to be rephrased.}{\color{blue} As the mast continues, it for sure is a dyadic.}{\color{red} the game does not become a dyadic, it becomes a dyadic + infinitesimal, then we ignore(remove) the infinitesimal part. For example $G=\{3|-3\}$ becomes $ \{0|0\} $ which is $0+*$, but we ignore $*$ for the calculation.}{\color{blue} second item $G_t=x$ for all $t>t'$} {\color{red} we define it that way, it does not 'become' a dyadic.}{\color{blue} Why not?}

% \begin{definition}[Penalized Game]\label{def: penalized game}
%     Let $G$ be a short game in canonical form and let $t\geq 0$. Then, {\it $G$ penalized by $t$}, denoted by $G_t$, is recursively defined as 
%     \begin{itemize}
%         \item $G_t=\left\{{G^{\mathcal L}}_t-t\;\mid\;{G^{\mathcal R}}_t+t\right\}$ for all $0\leq t\leq t'$ where $t'$ is the minimum $t$ for which the Left and Right stops of $G_t$ are equal to a dyadic, say $x$,
%         \item $G_t= x$ for all $t> t'$.\footnote{Note that the concept of `cooling by $t$' defined in \cite{S2013} is similar (but not the same) as this.}
%     \end{itemize} 
%     Here ${G^{\mathcal L}}_t$ denotes the set of games of the form ${G^L}_t$, and similarly for Right.
% \end{definition}

\begin{definition}[Penalized Position]\label{def: penalized game}
    Let $G$ be a short game in canonical form and let $p\in \mathbb{D}^+$. Then, {\it $G$ penalized by $p$}, denoted by $G_p$, is recursively defined as 
    \begin{itemize}
        \item $G_p=\left\{{G^{\mathcal L}}_{\!p}-p\;\mid\;{G^{\mathcal R}}_{\!p}+p\right\}$ for all $0\leq p\leq t$ where $t$ is the minimum $p$ for which the Left and Right stops of $G_p$ are equal to a dyadic, say $x$,
        \item $G_p = x$ for all $p > t$.
    \end{itemize} 
    Here ${G^{\mathcal L}}_{\!p}$ denotes the set of games of the form ${G^L}_{\!p}$, and similarly for Right.
\end{definition}

\begin{example}\label{ex:temp1}
    Let $G= \{9\mid 7\}$. Then, $G_p=\{9-p\mid 7+p\}$, for all $p\leq 1$. At $p=1$, $G_p$ becomes $\{8\,\mid \,8\}$, and hence, for all $p>1$, $G_p = 8$. In summary,
     $$G_p = \begin{cases}
         \{9-p\mid7+p\} & \text{ if } p\leq 1;\\
         8 & \text{ if } 1< p.\\
     \end{cases}$$
\end{example}

%Another example of Penalized game is given in Appendix~\ref{ex:temp2}.
\begin{observation}\label{obs: cooling and penalizing}
Although the concept of ``cooling by $t$'' defined in \cite{S2013} is similar but not identical to ``penalized by $p$'', the stops of ``$G$ penalized by $p$'' and ``$G$ cooled by $t$'' remain the same for all $p=t\geq0$.
\end{observation}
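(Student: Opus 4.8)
The plan is to prove a slightly sharper statement by induction on the game tree of $G$ (equivalently, on its formal birthday): for every $p\ge 0$, the Left stop and the Right stop of ``$G$ penalized by $p$'' (Definition~\ref{def: penalized game}) coincide, respectively, with those of ``$G$ cooled by $p$'' in the sense of \cite{S2013}. The observation that makes this plausible is that the two operations run the \emph{same} recursion on options — each replaces a Left option by its own (penalized, resp.\ cooled) version shifted by $-p$ and a Right option by its version shifted by $+p$ — and differ only in how they truncate at the top: penalizing collapses to a dyadic $x$ as soon as the stops of the recursively built ``bracket'' first coincide, whereas cooling collapses to (a game infinitesimally close to) the mean value $\m(G)$ as soon as that bracket first becomes infinitesimally close to a number, i.e.\ at the temperature $\te(G)$. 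So everything reduces to showing that these two freezing events occur at the same value of $p$ and leave behind games with the same stops.

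As a preliminary I would isolate the fact that an infinitesimal perturbation of a dyadic does not move the stops: if $x$ is a dyadic and $-y<H-x<y$ for every positive dyadic $y$, then $\Ls(H)=\Rs(H)=x$. Indeed $H\le x+y$ and $H\ge x-y$ for all such $y$, so Proposition~\ref{prop: relation between stop and game}(2) gives $\Ls(H)\le x+y$ and $\Rs(H)\ge x-y$; letting $y$ range over all positive dyadics, together with $\Rs(H)\le\Ls(H)$ from Proposition~\ref{prop: Lstop>Rstop}(1), this forces $x\le\Rs(H)\le\Ls(H)\le x$. Combined with standard thermograph theory (\cite{S2013}) this yields: ``$G$ cooled by $p$'' equals its bracket for all $p<\te(G)$, and has coincident Left and Right stops exactly when $p\ge\te(G)$, the common value then being $\m(G)$. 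By construction the analogous statement holds for penalizing: ``$G$ penalized by $p$'' equals its bracket below its threshold $t_G$ (using $t_G$ for the value called $t$ in Definition~\ref{def: penalized game}) and has coincident stops exactly for $p\ge t_G$, with common value the dyadic $x$ there.

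For the base case, $G$ equal to a dyadic $x$: both ``$G$ penalized by $p$'' and ``$G$ cooled by $p$'' equal $x$ as games (though perhaps not literally, e.g.\ $G=\{*\mid*\}$), and since the stops are invariants of game value both stop pairs are $(x,x)$. For the inductive step, apply the hypothesis to each option and Proposition~\ref{prop: Lstop>Rstop}(2) to the shifts $\pm p$: then for every $p$ the penalizing bracket and the cooling bracket have equal Left stops and equal Right stops. Now I compare $t_G$ with $\te(G)$. If $t_G<\te(G)$, the penalizing bracket at $t_G$ has coincident stops, hence so does the cooling bracket at $t_G$; but for $p<\te(G)$ the cooling bracket equals ``$G$ cooled by $p$'', which has coincident stops only for $p\ge\te(G)$ — a contradiction, so $t_G\ge\te(G)$. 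If $t_G>\te(G)$, then ``$G$ cooled by $\te(G)$'' has coincident stops $(\m(G),\m(G))$; either it equals the cooling bracket there, or the bracket is infinitesimally close to $\m(G)$ and so, by the preliminary fact, still has coincident stops — either way the penalizing bracket at $\te(G)$ has coincident stops, forcing $t_G\le\te(G)$. Hence $t_G=\te(G)$, and matching common stop values gives $x=\m(G)$. It follows that for $p<t_G=\te(G)$ the two games are their (stop-equal) brackets, and for $p\ge t_G=\te(G)$ both have stop pair $(\m(G),\m(G))$; this closes the induction.

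The step I expect to cost the most care is the behaviour exactly at $p=\te(G)$: there the cooled and the penalized games genuinely differ — the former may carry a $*$ or another infinitesimal, or may already be collapsed to $\m(G)$, while the latter is the last unfrozen bracket — so one must use the precise formulation of ``cooling'' in \cite{S2013} (that it truncates exactly when the bracket becomes infinitesimally close to a number) to see that the elementary stop-stability fact above is exactly the bridge needed. A minor point to note, not prove, is that penalizing is defined only on canonical forms while cooling is not; this is harmless because stops depend only on game value. Apart from that lemma and the standard fact that a thermograph has positive width below its temperature and zero width at and above it, no new ingredient is required.
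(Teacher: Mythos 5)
The paper itself offers no proof of this statement---it is filed as an unproved Observation whose only purpose is to let the authors import Siegel's results on stops of cooled games---so your attempt has to stand on its own. Your overall plan (induct on the game tree, show the two recursions are fed the same stop data, and show the two freezing events coincide) is the right one, and the preliminary stop-stability lemma is fine. The problem is the load-bearing assertion in your inductive step, that ``for every $p$ the penalizing bracket and the cooling bracket have equal Left stops and equal Right stops.'' This does not follow from the inductive hypothesis together with Proposition~\ref{prop: Lstop>Rstop}(2): the stops of a bracket are \emph{not} a function of the stops of its options once the bracket equals a number. For instance $\{0\mid 1\}=1/2$ while $\{*\mid 1\}=0$: the Left options $0$ and $*$ have identical stops and the Right options are literally equal, yet the two brackets have different stops. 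The recursion $\Ls=\max\Rs(\cdot^{L})$, $\Rs=\min\Ls(\cdot^{R})$ that your argument silently invokes applies, per Definition~\ref{def: Left and Right stops}, only when the bracket does not equal a dyadic.

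Below the freezing points this is repairable: for $p<t_G$ the penalizing bracket cannot equal a dyadic (its stops are unequal by minimality of $t_G$), for $p<\te(G)$ the cooling bracket cannot either, and the mixed case can be handled by the elementary bound that a game $H$ equal to a number $x$ satisfies $\max\Rs(H^{L})\le x\le\min\Ls(H^{R})$; with that (unstated) bound your two contradiction arguments do yield $t_G=\te(G)$. The genuine gap is at $p=t_G=\te(G)$, exactly the point you flag as delicate: there \emph{both} brackets may equal numbers, the recursion-based comparison says nothing, and your preliminary lemma only pins the cooled side to $(\m(G),\m(G))$ without connecting it to the penalized side's frozen dyadic $x$. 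So the step ``matching common stop values gives $x=\m(G)$''---which is precisely what you need for all $p$ at and above the threshold---is not established. It can be closed, e.g.\ by observing that the scaffold functions $p\mapsto\max_{L}\Rs(\cdot)-p$ and $p\mapsto\min_{R}\Ls(\cdot)+p$ are continuous (the options' walls are continuous piecewise-linear by induction), so their common value at the threshold is the limit of the agreeing, strictly separated stops from below, and the number bound above then forces $x=\m(G)$; but some such additional ingredient must be supplied before the induction closes.
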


Using Observation~\ref{obs: cooling and penalizing}, we can use the results on the stops of \emph{`$G$ cooled by $t$'} for that of \emph{`$G_p$ penalized by $p$'}.

\begin{proposition}[{\cite{S2013}*{Theorem 5.11(b)}}]\label{prop: L(G_t)<L(G)}
    %Let $G$ be a game and let $G_p$ denote the game $G$ penalized by $p$. 
    For any game $G$, for all $p\in \mathbb{D}^+ $, $\Ls(G)\geq \Ls(G_p)\geq \Rs(G_p)\geq \Rs(G)$. 
\end{proposition}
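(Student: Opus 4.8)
The statement to prove is Proposition~\ref{prop: L(G_t)<L(G)}, which the paper attributes to Siegel's book, so strictly speaking it is quoted rather than proved. Nonetheless, here is how I would reconstruct a self-contained argument.

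\medskip

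\noindent\textbf{Plan of the proof.} The plan is to prove the chain $\Ls(G)\geq \Ls(G_p)\geq \Rs(G_p)\geq \Rs(G)$ by induction on the formal birthday (or, equivalently, on the game tree) of the canonical form of $G$, treating the penalty parameter $p$ as a continuous variable. The middle inequality $\Ls(G_p)\geq \Rs(G_p)$ is immediate from Proposition~\ref{prop: Lstop>Rstop}(1) applied to the game $G_p$, so the real content is in the two outer inequalities, and by the symmetry $\Ls(-G)=-\Rs(G)$ (Proposition~\ref{prop: relation between stop and game}(1)) together with the fact that penalizing commutes with negation ($(-G)_p = -(G_p)$, which follows directly from Definition~\ref{def: penalized game}), it suffices to prove just $\Ls(G)\geq \Ls(G_p)$ for every game $G$ and every $p\in\mathbb{D}^+$.

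\medskip

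\noindent\textbf{Key steps.} First, dispose of the base/degenerate cases: if $G$ equals a dyadic $x$, then $G_p=x$ for all $p$ and $\Ls(G)=\Ls(G_p)=x$, so the inequality is an equality. Likewise, if $p>t$ (the stopping threshold in Definition~\ref{def: penalized game}), then $G_p$ is the dyadic $x$ with $x=\Ls(G_t)=\Rs(G_t)$, and one reduces to the case $p=t$; so we may assume $0\le p\le t$ and $G$ is not a number, whence $\Ls(G)=\max_{G^L}\Rs(G^L)$ and $\Ls(G_p)=\max_{G^L}\bigl(\Rs((G^L)_p)-p\bigr)$. Pick a left option $G^L$ achieving the maximum defining $\Ls(G_p)$. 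By the inductive hypothesis applied to $G^L$ (its tree is strictly smaller), we have $\Rs(G^L)\ge\Rs((G^L)_p)$, and hence
\begin{align*}
\Ls(G_p) &= \Rs\bigl((G^L)_p\bigr) - p \le \Rs(G^L) - p \le \Rs(G^L) \le \max_{\widetilde{G}^L}\Rs(\widetilde{G}^L) = \Ls(G),
\end{align*}
using $p\ge 0$ for the middle step. That closes the induction for the left inequality; the right inequality follows by the negation symmetry noted above.

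\medskip

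\noindent\textbf{Main obstacle.} The one genuinely delicate point is the bookkeeping around the threshold $t$ and the requirement in Definition~\ref{def: penalized game} that $G$ be in canonical form: one must check that the recursion ${G^{\mathcal L}}_{\!p}$ is well-defined (the options of a canonical form are again in canonical form, so this is fine) and that ``$G_p$ equals a dyadic for $p>t$'' is consistent with the inductive claim — in particular that $\Ls$ and $\Rs$ are being applied to games for which they are defined, and that the case split $p\le t$ versus $p>t$ does not create a discontinuity that breaks monotonicity. Concretely, the subtle inequality to verify is that at exactly $p=t$ the ``folded'' value $x$ satisfies $\Ls(G)\ge x$; this is just the $p=t$ instance of the inequality we are already proving, so no separate argument is needed, but it is the place where a careless induction would go in a circle. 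Everything else is the routine monotonicity computation displayed above.
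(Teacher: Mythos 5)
Your reduction to the single inequality $\Ls(G)\geq \Ls(G_p)$ via negation is fine, but the inductive step that is supposed to prove it contains a reversed inequality, and this is a genuine gap rather than a typo. You invoke ``the inductive hypothesis applied to $G^L$'' to claim $\Rs(G^L)\geq \Rs\bigl((G^L)_p\bigr)$. The statement being proved says exactly the opposite for every game, namely $\Rs(H_p)\geq \Rs(H)$: penalizing \emph{raises} Right stops and lowers Left stops. Indeed, your claimed inequality is false in general: for $H=\{2\mid 0\}$ and $0<p\leq 1$ one has $H_p=\{2-p\mid p\}$, so $\Rs(H_p)=p>0=\Rs(H)$. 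Consequently the middle step of your chain $\Ls(G_p)=\Rs\bigl((G^L)_p\bigr)-p\leq \Rs(G^L)-p$ is unjustified, and the whole one-level induction collapses; what the correct induction hypothesis gives you at the options is a \emph{lower} bound $\Ls(G_p)\geq \Ls(G)-p$, not the upper bound you need.

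There are two standard ways to repair this, and the paper's argument (following Siegel) takes the first. One descends \emph{two} levels and argues by contradiction: supposing $\Ls(G)<\Ls(G_p)$, pick the Left option $H$ attaining $\Ls(G_p)=\Rs(H_p)-p$, so that $\Rs(H)\leq \Ls(G)<\Rs(H_p)-p$; if $H$ is a number this is already absurd, and otherwise pick the Right option $K$ of $H$ with $\Rs(H)=\Ls(K)$ and use $\Rs(H_p)\leq \Ls(K_p)+p$ (since $K_p$ competes in the minimum defining $\Rs(H_p)$), so the two penalties cancel and one gets $\Ls(K)<\Ls(K_p)$, contradicting the induction hypothesis at the sub-sub-position $K$. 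Alternatively, your one-level computation can be salvaged by strengthening the induction hypothesis to the two-sided bounds $\Ls(G)-p\leq \Ls(G_p)\leq \Ls(G)$ and $\Rs(G)\leq \Rs(G_p)\leq \Rs(G)+p$: then $\Ls(G_p)=\max_{G^L}\Rs\bigl((G^L)_p\bigr)-p\leq \max_{G^L}\bigl(\Rs(G^L)+p\bigr)-p=\Ls(G)$, and the upper bound $\Rs(G_p)\leq \Rs(G)+p$ is proved symmetrically using $\Ls\bigl((G^R)_p\bigr)\leq \Ls(G^R)$. Either fix requires an idea (the cancellation of $\pm p$ across two levels, or the strengthened hypothesis) that is absent from your write-up as it stands.
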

The middle inequality follows by Proposition~\ref{prop: Lstop>Rstop} and the main idea behind the proof of the other two inequalities is that a penalty reduces the benefit for both players. 

% The middle inequality follows by Proposition~\ref{prop: Lstop>Rstop} and the other two inequalities arise from the \an{understanding/prception} that a penalty reduces the benefit for both players. %A detailed proof of these inequalities is provided in Appendix~\ref{proof:prop: L(G_t)<L(G)}.%}

The minimum penalty, for which the penalized game remains no longer urgent, is the {\em temperature} of the original game.

\begin{definition}[Temperature]\label{def:temp}
    The temperature of a dyadic $G=k/2^n$ is $\te(G)=-1/2^n$, where $k\in \Z$ and $n\in\Nat \cup \Set{0}$ and if $n>0$, $k$ is an odd integer. The temperature $\te(G)$ of a non-dyadic $G$ is the smallest $p\in \mathbb{D}^+$ such that $\Ls(G_p) = \Rs(G_p)$. 
\end{definition}
If the game $G$ is given, we may write $t=\te(G)$ and $m=m(G)$, and similar for $\Ls=\Ls(G)$ and $\Rs=\Rs(G)$. 
In Example~\ref{ex:temp1}, $\te(G)=1$.  %There is a slight abuse of notation here, in that ``$t$'' is used either as a variable or as a function of a game $G$. Whenever we refer to the temperature of a game $G$, we write explicitly $t(G)$, and otherwise $t$ is a variable that refers to a specific penalty as in the notation $G_t$. 

One of the issues with the definition of temperature is that it is somewhat unwieldy from a computational  point of view. A more intuitive and appealing way of understanding (and computing) the temperature of a game comes from a more pictorial device, the {\em thermograph}. 

\begin{definition}[Thermograph]\label{def:thermograph}
    Let $G$ be a short game. Then, the thermograph of $G$, $\T(G)$, is a plot of the Left and Right stops of $G_p$ (on the $X$-axis) with respect to $p$ (on the $Y$-axis). %\ur{Should this definition come just after the definition of penalize?}\an{ first we are working towards penalized game so that we can find out temperature, then we give the def of temperature, and then we say that this def doesn't work well so here is a geometrical tool.}\ur{its a bit backwards, right?}\an{it is because mean value is hanging around in between without any relation, but I feel it should come after the temperature.}
\end{definition}

The minimum $p\geq0$ at which the thermograph's mast starts equals the temperature of a game. Thus, the thermograph gives us a computational means to find the temperature of a game. 

For a given (hot) game $G$, $\Ls(G_p)$ and $\Rs(G_p)$ are bounded functions on the penalty $p$. Sometimes we think of them as the walls of $\T(G)$. 

\begin{definition}[Walls]\label{def:walls}
The sets 
\begin{align*}
    \LW(G) &= \Set{(\Ls(G_p), p) \SetSymbol p \in \Dp} \text{ and} \\
    \RW(G) &= \Set{(\Rs(G_p), p) \SetSymbol p \in \Dp}
\end{align*}
are called the large left and large right walls of $\T(G)$, respectively.
The sets 
\begin{align*}
    \lw(G) &= \Set{(\Ls(G_p), p) \SetSymbol p \in \Dp,\, p \le t} \text{ and} \\
    \rw(G) &= \Set{(\Rs(G_p), p) \SetSymbol p \in \Dp,\, p \le t}
\end{align*}
are called the small left and small right walls of $\T(G)$, respectively.
A wall is either a small or a large wall.
% These functions will also be denoted by $\LW(G)$ and $\RW(G)$ respectively.
\end{definition}

% \begin{definition}[Walls]\label{def:walls}
% The sets $\LW(G)=\Set{(\Ls(G_p),p) \SetSymbol p\in \Dp }$ and $\RW(G)= \Set{\; (\Rs(G_p),p)  \SetSymbol p\in\Dp}$ are called the large left and large right wall of $\T(G)$, respectively. 
% The sets $\lw(G) = \Set{ (\Ls(G_p),p) \SetSymbol p\in\Dp,\; p\le t}$ and $\rw(G)=\Set{(\Rs(G_p),p) \SetSymbol p\in\Dp,\; p\le t}$ are called the small left and small right wall of $\T(G)$, respectively. A wall is either a small or a large wall.%These functions will also be denoted by $\LW(G)$ and $\RW(G)$ respectively.
% \end{definition}
At times, we may refer to the `walls of the game' rather than explicitly stating the `walls of the thermograph of the game.' However, in both cases, we are referring to the same concept.

The large left (right) wall can be seen as an extension of the small left (right) wall, continuing indefinitely. In Figure~\ref{fig: thermograph ex for left and right walls}, we depict a thermograph where ABCD$\infty$ and ED$\infty$ is the large left and large right wall of the thermograph, respectively, while ABCD and ED is the small left and small right wall, respectively.

\begin{figure}[ht]
    \centering
    \begin{tikzpicture}[scale=0.4,>=stealth, dot/.style = {circle, fill=red, minimum size=#1, inner sep=0pt, outer sep = 0pt}, dot/.default = 6pt]
                    \draw [<->] (-7,0) -- (6,0) node [at end, right] {$p=0$};
                        \node[dot, scale=0.3, label=below:{\midsize A}] (A) at (-6,0) { };
                        \node[dot, scale=0.3, label=left:{\midsize B}] (B) at (-4,2) { };
                        \node[dot, scale=0.3, label=left:{\midsize C}] (C) at (-4,3) { };
                        \node[dot, scale=0.3, label=left:{\midsize D}] (D) at (-1,6) { };
                        \node[dot, scale=0.3, label=below:{\midsize E}] (E) at (5,0) { };
                        % \node[dot, scale=0.3, anchor=north, label=right:F] (F) at (3,1.5) { };
                        % \node[dot, scale=0.3, anchor=north, label=below:G] (G) at (3,0) { };
                        \draw[red] (A)--(B)--(C)--(D)--(E);
                        \draw[red] (D)--+(0,2) node[anchor=center] (infi) { };
                        \draw[red, dashed] (infi.north)--+(0,1.2) node[anchor=north east] {\color{black} $\infty$};
                        \draw[black] (0,0)--+(0,0.13)--+(0,-0.2) node[below] (zero) {0};
                        % \draw[dotted,black] (e2.center)--+(-1.5,0);
    \end{tikzpicture}
    \caption{The thermograph of $G=\Bigl\{6, \bigl\{10\mid\{5\mid3\}\bigr\}\mid-5\Bigr\}$.} %The coordinates of the labeled points are: A:$(6,0)$, B:$(4,2)$, C:$(4,2.5)$, D:$(0.75,5.75)$, and E:$(-5,0)$.} %\ur{Move to appendix. Too technical here.}}
    \label{fig: thermograph ex for left and right walls}
\end{figure}

The temperature is the $y-$coordinate of the point where the small left and right walls merge. In Figure~\ref{fig: thermograph ex for left and right walls}, the $y-$coordinate of D is the temperature.
Moreover, in \cite{S2013}*{Theorem~5.17}, there is a 
proof that the $x-$coordinate of the same point equals the mean value of the game. Thus, given a game, the thermograph gives us a computational means to find both the temperature and the mean value of the game. %\an{In later sections, we will see that the walls of the options of a game $G$ determine to the walls of $G$. We will also observe that the top part of the large walls do not always contribute.} 
%\an{In later sections, we will see that, in general, the large walls of the options need to be considered while computing the walls of $G$. However, we will also observe that the upper part of the large walls of the options do not always contribute to the walls of $G$.}
While computing the walls of a game $G$, in general, the large walls of the options need to be considered.
%\sout{Large walls thus extend into infinity}. 
%As we will see, the crux will be that the masts of options that are not masts do not contribute to the thermograph shape. \todo{it is not true in all cases. Because sometimes the thermographs of options are masts and also in case $1\leq a/b<\phi$} %If we refer to the left wall below the mast, we write $\LW(G)_{<t}$ and similar for the right wall.

%A large wall can be tilted. A large left wall of a Right option will be tilted 90 degrees anticlockwise (towards the left), while a large right wall of a Left option will be tilted 90 degrees clockwise (towards the right). 

%\begin{definition}[Tilted Walls]
%Consider a game $G$. Let $\tilted(G^R)=\{(p,\Ls(G^R_p+p))\mid 0\le p\}$ be the left tilted large left wall of $G^R$. Let $\tilted(G^L)=\{(p,\Rs(G^L_p-p))\mid 0\le p\}$ be the right tilted large right wall of $G^L$.
%\end{definition}
%\!\mid_{p\le t} \!\mid_{p\le t}
%The definition depends on the surrounding context making it clear whether we are tilting a large left wall of a Right option or otherwise. 

%Theorem 5.17. Gt = m(G) for all t > t(G).

Sometimes, we view a thermograph as the two functions that define it, but other times, it is convenient to view it as a vertical structure. In this spirit, we define some particularly simple structures. In our proofs to come, we will assume such structures of the options by induction, and prove that they survive in the induction step.

Let $A\subset \mathbb{D}\times \mathbb{D}$ where $\mathbb{D}=\Set{k/2^n\SetSymbol k\in \Z, n\in \Nat_0}$. % \an{I think it has to be $\mathbb{D}\times\mathbb{D}$, because it might happen that for $x\in[1/8,1/2]$, the slope is $\infty$ (like BC in fig~\ref{fig: thermograph ex for left and right walls}) and otherwise the slope is 1 for all $x\leq 2$.}\ur{Yes.}. 
If for all $(x,y) \in A, y=kx+c$, for some constant $c$, then we say that $A$ {\em has slope}~$k$. %\in \slope (k)$.
%\end{definition}
 
\begin{definition}[Masts and Tents]\label{def: mast_tent}
Consider a (hot or tepid) game $G$. Then:
\begin{itemize}
    \item $G\in \M$, if $\LW(G)=\RW(G)$;
    \item $G\in \DT$ (double tent), if $\lw(G)$ has slope $-1$ and $\rw(G)$ has slope $+1$;
    \item $G\in \LST$ (left single tent), if $\lw(G)$ has slope $-1$ and $\rw(G)$ has slope $0$;
    \item $G\in\RST$ (right single tent), if $\lw(G)$ has slope $0$ and $\rw(G)$ has slope $+1$.
\end{itemize}  
\end{definition}

The terms double tent, left single tent, and right single tent refer to the shapes of a game's thermograph. These thermograph shapes can be seen in Figure~\ref{fig: tents}. %\an{Note that in $\DT$, the coordinate $x=0$ can located anywhere within the tent but not outside it. Similarly, in $\M$, $x=0$ can lie anywhere.} \ur{Well double tents do not have to come from Robin Hood. So in case of your comment, we should mention Robin Hood.}\an{I see, yeah, then my comment is not needed because we are not relating this with Robin Hood at this point.} 
The three categories—double tent, left single tent, and right single tent—are collectively referred to as \emph{tents}.

\begin{figure}[ht]
    \centering
    \begin{subfigure}{0.16\textwidth}
    \centering
        \begin{tikzpicture}[scale=0.5,>=stealth, dot/.style = {circle, fill=red, minimum size=#1, inner sep=0pt, outer sep = 0pt}, dot/.default = 6pt]
                    \draw [<->] (-2,0) -- (2,0) node [at end, right] { };
                        \node[dot, scale=0.3, label=below:{\midsize $\ell=r$}] (A) at (0,0) { };
                        \draw[black] (-1.3,0.13)--(-1.3,-0.13) node[anchor=north] (zero) {\midsize 0};
                        \draw[red] (A)--+(0,3);
                        %\draw[black] (0.5,0)--+(0,0.05)--+(0,-0.1) node[below] (zero) {0};
                        % \draw[dotted,black] (e2.center)--+(-1.5,0);
    \end{tikzpicture}
    \caption{A Mast.} %For instance, $G=\{6\mid -5$}
    \label{fig: mast}
    \end{subfigure}%
    \hfill
    \begin{subfigure}{0.28\textwidth}
    \centering
        \begin{tikzpicture}[scale=0.5,>=stealth, dot/.style = {circle, fill=red, minimum size=#1, inner sep=0pt, outer sep = 0pt}, dot/.default = 6pt]
                    \draw [<->] (-3,0) -- (3,0) node [at end, right] { };
                        \node[dot, scale=0.3, label=below:{\midsize $\ell$}] (A) at (-2,0) { };
                        \draw[black] (-0.5,0.13)--(-0.5,-0.13) node[anchor=north] (zero) {\midsize 0};
                        \node[dot, scale=0.3, label=left:{\midsize $p=t$}] (B) at (0,2) { };
                        \node[dot, scale=0.3, label=below:{\midsize$r$}] (C) at (2,0) { };
                        %\node[dot, scale=0.3] (A) at (-2,0) { };
                        %\node[dot, scale=0.3] (B) at (-0,2) { };
                        %\node[dot, scale=0.3] (C) at (2,0) { };
                        \draw[red] (A)--(B)--(C) (B)--+(0,1);
                        %\draw[black] (0.5,0)--+(0,0.05)--+(0,-0.1) node[below] (zero) {0};
                        % \draw[dotted,black] (e2.center)--+(-1.5,0);
    \end{tikzpicture}
    \caption{A Double Tent ($\DT$).} %For instance, $G=\{6\mid -5$}
    \label{fig: DT thermograph}
    \end{subfigure}%
    \hfill
    \begin{subfigure}{0.28\textwidth}
    \centering
        \begin{tikzpicture}[scale=0.5,>=stealth, dot/.style = {circle, fill=red, minimum size=#1, inner sep=0pt, outer sep = 0pt}, dot/.default = 6pt]
                    \draw [<->] (-3,0) -- (2,0) node [at end, right] { };
                        \node[dot, scale=0.3, label=below:{\midsize$\ell$}] (A) at (-2,0) { };
                         \draw[black] (1.5,0.14)--(1.5,-0.14) node[anchor=north] (zero) {\midsize 0};
                        \node[dot, scale=0.3, label=left:{\midsize$p=t$}] (B) at (0.5,2.5) { };
                        \node[dot, scale=0.3, label=below:{\midsize$r$}] (C) at (0.5,0) { };
                        \draw[red] (A)--(B)--(C) (B)--+(0,1);
                        %\draw[black] (1.5,0)--+(0,0.05)--+(0,-0.1) node[below] (zero) {0};
                        % \draw[dotted,black] (e2.center)--+(-1.5,0);
    \end{tikzpicture}
    \caption{A Left Single Tent ($\LST$).}
    \label{fig: LST thermograph}
    \end{subfigure}%
    \hfill
    \begin{subfigure}{0.28\textwidth}
    \centering
        \begin{tikzpicture}[scale=0.5,>=stealth, dot/.style = {circle, fill=red, minimum size=#1, inner sep=0pt, outer sep = 0pt}, dot/.default = 6pt]
                    \draw [<->] (-3,0) -- (2,0) node [at end, right] { };
                        \node[dot, scale=0.3, label=below:{\midsize $\ell$}] (A) at (-1.5,0) { };
                        \node[dot, scale=0.3, label=left:{\midsize $p=t$}] (B) at (-1.5,2.5) { };
                        \draw[black] (-2.5,0.14)--(-2.5,-0.14) node[anchor=north] (zero) {\midsize 0};
                        \node[dot, scale=0.3, label=below:{\midsize $r$}] (C) at (1,0) { };
                        \draw[red] (A)--(B)--(C) (B)--+(0,1);
                        %\draw[black] (-2.5,0)--+(0,0.05)--+(0,-0.1) node[below] (zero) {0};
                        % \draw[dotted,black] (e2.center)--+(-1.5,0);
    \end{tikzpicture}
    \caption{A Right Single Tent ($\RST$).}
    \label{fig: RST thermograph}
    \end{subfigure}
    \caption{Mast and Tents. %The letters $\Ls$ and $\Rs$ in sub-figures denote the Left and Right stop of the game, respectively.
    }
    \label{fig: tents}
\end{figure}
In more detail we get the following (by keeping track of the constant $c$ in the definition of slope).

\begin{observation}[Mast and Tents]\label{obs: temp of mast_tent}
 We have: 
 \begin{itemize}
%     \item $G\in \M$, if, for all $p\geq 0$, $\Rs(G_p)=\Ls(G_p)$;
     \item $G\in \DT$, if, for all $p\le t(G)$, $\Ls(G_p)= t-p+m$ and $\Rs(G_p)=p-t+m$;
     \item $G\in \LST$, if, for all $p\le t(G)$, $\Ls(G_p)= t-p+m$ and $\Rs(G_p)= m$; 
     \item $G\in \RST$, if, for all $p\le t(G)$, $\Ls(G_p)= m$ and $\Rs(G_p)= p-t+m$,
 \end{itemize}  
 where, as usual $t=t(G)$ and $m=m(G)$. %\todo{I think we are not using this lemma anywhere, instead the next lemma is useful in the proof of main theorem.}\ur{I used it in the proof. Have a look. But I agree, we can provide the small steps here already. However you introduce yet more notation here.. Previous lemma proof is obvious. Next one you need to write a short proof.}\an{done, please have a look.}
 \end{observation}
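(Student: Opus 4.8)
The plan is to reduce each of the three cases to the elementary fact that an affine function is pinned down by its slope together with a single one of its values, the value I use being the common height at which the two small walls merge. So the first thing I would establish is the pivotal fact: for any (hot or tepid) $G$, writing $t=\te(G)$ and $m=\m(G)$, we have $\Ls(G_t)=\Rs(G_t)=m$; equivalently, the point $(m,t)$ lies on both small walls $\lw(G)$ and $\rw(G)$. This is essentially already available. By Definition~\ref{def: penalized game}, $t$ is the least penalty for which $\Ls(G_p)$ and $\Rs(G_p)$ coincide in a common dyadic value $x$, with $G_p=x$ for $p\ge t$; and by the discussion following Definition~\ref{def:thermograph}, that is, \cite{S2013}*{Theorem~5.17} together with Observation~\ref{obs: cooling and penalizing} (used to pass from ``cooling'' to ``penalizing''), this common dyadic is exactly the mean, $x=m$. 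Since $t\le t$, the point $(m,t)=(\Ls(G_t),t)=(\Rs(G_t),t)$ indeed belongs to each of $\lw(G)$ and $\rw(G)$.

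With this anchor in place, the three bullets follow quickly. Take $G\in\DT$. By Definition~\ref{def: mast_tent} and the meaning of ``has slope $k$'', there are constants $c_\ell,c_r$ with $\Ls(G_p)=-p+c_\ell$ and $\Rs(G_p)=p+c_r$ for all $p\le t$; evaluating at $p=t$ and using the pivotal fact gives $m=-t+c_\ell$ and $m=t+c_r$, so $c_\ell=m+t$ and $c_r=m-t$, which is precisely $\Ls(G_p)=t-p+m$ and $\Rs(G_p)=p-t+m$ on $[0,t]$. Conversely, these two identities exhibit $\lw(G)$ and $\rw(G)$ as lines of slopes $-1$ and $+1$, hence they characterise $\DT$, so it suffices to treat the ``$\Rightarrow$'' direction in each case. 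For $G\in\LST$, the slope-$(-1)$ wall again yields $\Ls(G_p)=t-p+m$, while $\rw(G)$ having slope $0$ means $\Rs(G_p)$ does not vary with $p$ on $[0,t]$, and the pivotal fact forces that constant value to be $m$. The case $G\in\RST$ is symmetric: one either repeats the argument with the roles of $\Ls$ and $\Rs$ interchanged, or invokes the first part of Proposition~\ref{prop: relation between stop and game} (so that $\Ls((-G)_p)=-\Rs(G_p)$ and $\Rs((-G)_p)=-\Ls(G_p)$), together with $\te(-G)=\te(G)$, $\m(-G)=-\m(G)$, and the fact that $G\in\RST$ exactly when $-G\in\LST$.

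The only genuine obstacle is the pivotal fact of the first paragraph — that the small walls merge at height $\te(G)$ above abscissa $\m(G)$ — and even that is not proved here from scratch but assembled from Definition~\ref{def: penalized game}, Observation~\ref{obs: cooling and penalizing}, and \cite{S2013}*{Theorem~5.17}. Everything downstream is just bookkeeping of the constant $c$, together with a quick check of the degenerate situation $t=0$, in which every small wall collapses to the single point $(m,0)$ and all three pairs of identities hold trivially.
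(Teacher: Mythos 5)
Your proposal is correct and follows exactly the route the paper intends but leaves implicit: anchor both small walls at the merge point $(m,t)$ (Definition~\ref{def: penalized game}, Observation~\ref{obs: cooling and penalizing}, and \cite{S2013}*{Theorem~5.17}), then use the slope conditions of Definition~\ref{def: mast_tent} to pin down the constant $c$, which is precisely the paper's parenthetical ``by keeping track of the constant $c$ in the definition of slope.'' Nothing further is needed.
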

 %\begin{proof}
%Obvious.     
% \end{proof}
 The following lemma helps in determining the temperatures and mean values of games with tent-shaped thermographs. 
Let us reformulate the case of $p=0$, how it applies to the proof of our main theorem. 
\begin{lemma}[Temperature and Mean of Tents]\label{lem: temp of tents}
    Let $G$ be a game. Then,  %and let $l$ and $r$ denote $\Ls(G)$ and $\Rs(G)$, respectively. Then,
    \begin{enumerate}
        \item if $G\in \DT$, $t=(\ell-r)/2$ and $m=(\ell+r)/2$;
        \item if $G \in \LST$, $t=\ell-r$ and $m=r$;
        \item if $G \in \RST$, $t=\ell-r$ and $m=\ell$,
    \end{enumerate}
    where, as usual $t=t(G)$, $m=m(G)$, $\ell=\ell(G)$ and $r=r(G)$.
\end{lemma}
\begin{proof}
    Apply Observation~\ref{obs: temp of mast_tent} with $p=0$.%  \an{We begin with item~(1). In this case, the thermograph of $G$ is represented by Figure~\ref{fig: DT thermograph with ABC}. In this figure, \(\triangle ABC\) is an isosceles triangle, as the slopes of the left and right walls are \(-1\) and \(+1\), respectively. Consequently, the temperature (the \(y\)-coordinate of \(B\)) is half the distance between \(A\) and \(C\). The mean (the \(x\)-coordinate of \(B\)) equals the \(x\)-coordinate of the midpoint of \(A\) and \(C\). Note that $A$ and $C$ in Figure~\ref{fig: Tents with ABC} are the points on the $x$-axis representing the Left and Right stop of the underlying game.} 
 %   \an{For item~(2), \(\triangle ABC\) in Figure~\ref{fig: LST thermograph with ABC} is a right triangle. In this case, the temperature is the distance between \(A\) and \(C\), while the mean corresponds to the \(x\)-coordinate of point \(C\).}  
  %  \an{The proof of item~(3) is omitted, as it is identical to that of item~(2).}  
\end{proof}
%The proof of this lemma follows directly from the simple geometrical structures of \( \DT \), \( \LST \), and \( \RST \).\ur{This style of wring will be rejected by referees. Proof environment is required.}\an{oh, after this I actually wrote a proof, please have a look at that, then we can remove this.}

In this spirit of preparing for the main proofs to come, let us present a general lemma concerning the simplest of thermographs.

\begin{lemma}\label{lem: MvsO mast vs options}
    Consider a game $G$ and let $H$ denote the Left option of $G$ with the largest Left stop. If  $H\in\M$, then $\LW(G)$ does not depend on any other Left option than $H$.
\end{lemma}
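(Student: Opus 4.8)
The plan is to argue directly from the recursive definition of the large left wall together with the defining property of a mast. Fix a penalty $p \in \mathbb{D}^+$. By Definition~\ref{def: penalized game} and Definition~\ref{def:walls}, the point of $\LW(G)$ at height $p$ has $x$-coordinate $\Ls(G_p)$, and for $p$ below the temperature of $G$ this is computed as $\max_{G^L}\bigl(\Rs({G^L}_{\!p}) - p\bigr)$, i.e.\ each Left option $G^L$ contributes the value $\Rs({G^L}_{\!p})$ shifted down by $p$; the large left wall is the upper envelope of these contributions (continued up the mast above the temperature). So it suffices to show that for every other Left option $G^L \neq H$ and every $p$, $\Rs({G^L}_{\!p}) \le \Rs(H_p)$; then $H$ alone realizes the maximum at every height, and $\LW(G)$ is determined by $H$.

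First I would record what $H \in \M$ gives us: by Observation~\ref{obs: temp of mast_tent} (the mast case, $\LW(H)=\RW(H)$), we have $\Ls(H_p) = \Rs(H_p) = m(H)$ for all $p \le t(H)$, and for $p \ge t(H)$ the penalized game $H_p$ is the dyadic $m(H)$, so again $\Ls(H_p)=\Rs(H_p)=m(H)$. Hence $\Rs(H_p) = m(H)$ for \emph{all} $p \in \mathbb{D}^+$; the right wall of $H$ is the constant vertical line $x = m(H)$. Next, the hypothesis that $H$ has the largest Left stop among Left options of $G$ means $\Rs(H) \ge \Rs(G^L)$ for every Left option $G^L$ (unwinding: $\Ls(H)$ largest, but since $H$ is a mast $\Ls(H)=\Rs(H)=m(H)$; and comparing $\Rs$-values of the options is exactly comparing their contributions to $\Ls(G)$ at $p=0$). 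Since $H \in \M$, $\Rs(H)=m(H)=\Rs(H_p)$ for all $p$.

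Now I combine these with Proposition~\ref{prop: L(G_t)<L(G)}: for any Left option $G^L$ and any $p \in \mathbb{D}^+$, $\Rs({G^L}_{\!p}) \le \Ls({G^L}_{\!p}) \le \Ls(G^L) \,(= \Rs(G^L)$ at $p=0$ need not hold, so I instead use the top inequality of that proposition) $\le \Rs(G^L)$? — here I must be careful: the clean chain is $\Rs({G^L}_{\!p}) \le \Ls(G^L)$ is not immediate. The honest route: Proposition~\ref{prop: L(G_t)<L(G)} gives $\Rs({G^L}_{\!p}) \ge \Rs(G^L)$ going the wrong way, so that is not it either. The correct argument is that penalizing only ever moves the Right stop of an option \emph{up}, so to dominate $H$ I should instead show $\Ls({G^L}_{\!p}) - p \le \Ls(H_p) - p$, i.e.\ work with the Left-stop contributions after the shift. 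Using $\Ls(H_p)=m(H)$ and, by Proposition~\ref{prop: L(G_t)<L(G)}, $\Ls({G^L}_{\!p}) \le \Ls(G^L)$, together with $\Ls(G^L) \le \Rs(H)=m(H)$ (which needs $\Ls(G^L) \le \Ls(H)$, true by maximality of $H$, and $\Ls(H)=\Rs(H)$ since $H\in\M$), we get $\Ls({G^L}_{\!p}) \le m(H) = \Ls(H_p)$ for all $p$. Hence in forming $\Ls(G_p) = \max_{G^L}(\Rs({G^L}_{\!p}) - p)$, wait — that max is over $\Rs$, not $\Ls$, of the options.

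**The main obstacle**, which the above scratch-work exposes, is exactly this bookkeeping: the large left wall of $G$ is built from the \emph{large right walls} of the Left options of $G$ (shifted down by $p$), so to show $H$ dominates I need $\RW(G^L)$ to lie weakly left of $\RW(H)$ for every other Left option $G^L$, uniformly in $p$. Since $\RW(H)$ is the constant line $x=m(H)=\Rs(H)$, and since $\Rs(H)\ge \Rs(G^L)=\Ls(G^L)$? — no, $\Rs(G^L)$ could exceed $m(H)$. So the genuinely needed fact is monotonicity of the right wall of a mast versus a general option: I claim $\Rs({G^L}_{\!p}) \le m(H)$ for all $p$ fails in general, so the lemma must instead be proved by noting that what enters $\Ls(G_p)$ is $\min$ somewhere... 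Concretely, I would finish by: (i) $\LW(G)$ is the pointwise max over Left options of ($\RW$ of that option, shifted down by $p$), capped to be non-decreasing / continued as the mast; (ii) for $H\in\M$, its shifted right wall is the line through $(m(H),0)$ of slope — it is shifted down by $p$, so it is the ray $x=m(H)$ traced upward, i.e.\ still the vertical line $x=m(H)$, because $\Rs(H_p)=m(H)$ independent of $p$; (iii) for any other $G^L$, its shifted right wall $\{(\Rs({G^L}_{\!p})-p? )\}$ — and here the shift is what saves us: the contribution of $G^L$ to $\Ls(G)$ at height $p$ is $\Rs({G^L}_{\!p})$ living at height $p$ on $\LW(G)$, and one shows by the standard thermograph estimate that $\Rs({G^L}_{\!p}) \le \Ls(G^L) + (\text{something} \le 0)$... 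I will resolve this by invoking that $\Ls(G) = \max_{G^L} \Rs(G^L)$ is attained at $H$ with value $\Rs(H)=\Ls(H)=m(H)$, hence $\Rs(G^L)\le m(H)$ for all $G^L$, and then Proposition~\ref{prop: L(G_t)<L(G)} applied to $G^L$ gives $\Rs(G^L) \le \Rs({G^L}_{\!p})$ — still the wrong direction — so the clean statement I actually need and will prove as the crux is: \emph{the large right wall of any game lies (weakly) to the left of the vertical line $x=\Ls$ of that same game}, i.e.\ $\Rs({G^L}_{\!p})-p \le \Rs(G^L) \le \Ls(G^L) \le \Ls(H)=m(H)$, where the first inequality $\Rs({G^L}_{\!p})\le \Rs(G^L)+p$ is the elementary observation that each penalty step adds at most $p$ total to the right stop (provable by an easy induction on the game tree from Definition~\ref{def: penalized game}, mirroring Proposition~\ref{prop: Lstop>Rstop}(2)). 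Granting that elementary lemma, $\Rs({G^L}_{\!p})-p \le \Ls(H)=\Ls(H_p)=m(H)$ for every other Left option and every $p$, so $H$ alone determines $\max_{G^L}(\Rs({G^L}_{\!p})-p)=\Ls(G_p)$ at every height, and the mast continuation above $t(G)$ is likewise fixed by $H$; therefore $\LW(G)$ depends only on $H$. The only real work, then, is the one-line induction establishing $\Rs(G_p) \le \Rs(G) + p$, which I expect to be the main (but routine) obstacle.
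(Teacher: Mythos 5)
There is a genuine gap in your final argument, and it is exactly the off-by-$p$ bookkeeping you were wrestling with. The contribution of a Left option $G^L$ to $\Ls(G_p)$ is $\Rs({G^L}_{\!p})-p$, while the contribution of $H$ is $\Rs(H_p)-p=m(H)-p$ (since $H\in\M$). So to conclude that $\LW(G)$ depends only on $H$ you must show $\Rs({G^L}_{\!p})-p\le m(H)-p$, i.e.\ $\Rs({G^L}_{\!p})\le m(H)$, for every $p$ and every other Left option. Your closing chain, $\Rs({G^L}_{\!p})-p\le \Rs(G^L)\le \Ls(G^L)\le \Ls(H)=m(H)$, only bounds the \emph{shifted} contribution of $G^L$ by the \emph{unshifted} mast value $m(H)$; since $H$'s own contribution at height $p$ is $m(H)-p$, this leaves a slack of size $p$ in which another option could protrude and alter the wall, so the conclusion ``$H$ alone determines $\max_{G^L}(\Rs({G^L}_{\!p})-p)$ at every height'' does not follow. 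The auxiliary estimate $\Rs(G_p)\le\Rs(G)+p$ that you single out as the crux is true (walls have slope at most $1$ in this sense) but is simply not strong enough to close this gap.

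Moreover, the inequality you explicitly dismissed as ``fails in general'', namely $\Rs({G^L}_{\!p})\le m(H)$ for all $p$, is in fact true, is exactly what is needed, and is the paper's whole proof: by Proposition~\ref{prop: L(G_t)<L(G)} (whose middle inequality is Proposition~\ref{prop: Lstop>Rstop} applied to ${G^L}_{\!p}$) one has $\Rs({G^L}_{\!p})\le\Ls({G^L}_{\!p})\le\Ls(G^L)$, and by the maximality of $H$'s Left stop together with $H\in\M$ one has $\Ls(G^L)\le\Ls(H)=\Rs(H)=\Rs(H_p)$. Your first, abandoned attempt contained precisely this chain; the step $\Rs({G^L}_{\!p})\le\Ls({G^L}_{\!p})\le\Ls(G^L)$ is immediate from the cited propositions and needs no penalty-rate estimate. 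With that chain restored, the rest of your setup (the left wall of $G$ as the upper envelope of the shifted large right walls of the Left options, continued by the mast above $t(G)$) is correct and matches the paper's argument.
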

\begin{proof}
    Recall that for a game $G$, $G_p$ denotes the game \textit{$G$ penalized by $p$}. 
    %Let $H$ be the Left option of $G$ with the largest Left stop and it's thermograph is given by a mast \an{is this considered a repetition of the information given in the lemma statement? I am not sure.}. 
    %By Proposition~\ref{prop: Lstop>Rstop}, $\RW(G^L)\le\M(H)$. %the mast and the Right slant of any other Left option cannot be on the Left side of the mast of $H$. 
    We get, for all $p\geq 0$ and all $G^L\in G^{\mathcal{L}}$,
    \begin{align}
        \Rs(H_p) = \Rs(H) &= \Ls(H) \label{eq: MvsO 1}\\
                &\geq \Ls(G^L) \label{eq: MvsO 2}\\
                &\geq \Ls({G^L}_{\!p}) \tag{By Prop~\ref{prop: L(G_t)<L(G)}}\\
                &\geq \Rs({G^L}_{\!p}) \tag{By Prop~\ref{prop: Lstop>Rstop}}
    \end{align} 
    Equation~\eqref{eq: MvsO 1} follows by Definition~\ref{def:thermograph} as $H\in \M$. Equation~\eqref{eq: MvsO 2} follows as $H$ is the option with the largest Left stop. By combining this result with the definition of Left stop, %$\Ls(G_t)=\max_{G^L} (\Rs({G^L}_t)-t)$, 
    we get $\Rs(G_p) = \Rs(H_p)-p$ for all $p\geq 0$.
\end{proof}

A similar result holds for the Right options. %The next corollary state the same for Right options.
\begin{corollary}\label{cor: MvsO mast vs options}
    Consider a game $G$ and let $H$ denote the Right option of $G$ with the smallest Right stop. If  $H\in \M$, then  $\RW(G)$ does not depend on any other Right option than $H$.
\end{corollary}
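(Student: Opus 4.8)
The plan is to obtain this statement as the Left--Right dual of Lemma~\ref{lem: MvsO mast vs options}, applied to the game $-G$. Recall that negation interchanges the two players: the Left options of $-G$ are exactly the games $-G^R$ with $G^R\in G^{\mathcal R}$, and by Proposition~\ref{prop: relation between stop and game}(1), $\Ls(-G^R)=-\Rs(G^R)$ and $\Rs(-G^R)=-\Ls(G^R)$.

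First I would record that penalizing commutes with negation, i.e.\ $(-G)_p=-(G_p)$ for every $p\in\Dp$. This is a routine induction on Definition~\ref{def: penalized game}: in the non-constant range one pushes the $\pm p$ shifts through the negation using Proposition~\ref{prop: Lstop>Rstop}(2), and the switchover threshold $t$ is the same for $G$ and $-G$ because $\Ls((-G)_p)=-\Rs(G_p)$ and $\Rs((-G)_p)=-\Ls(G_p)$, so the stops of $(-G)_p$ agree precisely when those of $G_p$ do. (One may instead quote Observation~\ref{obs: cooling and penalizing} together with the standard fact that cooling commutes with negation.) It follows that $\LW(-G)$ is the reflection of $\RW(G)$ in the $p$-axis; hence $\RW(G)$ depends only on the Right option $H$ of $G$ if and only if $\LW(-G)$ depends only on its Left option $-H$.

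Next I would verify the two hypotheses needed to apply the Lemma to $-G$ and $-H$. The option $-H$ has the largest Left stop among the Left options of $-G$: if $\Rs(H)\le\Rs(G^R)$ for all $G^R\in G^{\mathcal R}$, then $\Ls(-H)=-\Rs(H)\ge-\Rs(G^R)=\Ls(-G^R)$. And $-H\in\M$: the class $\M$ is defined by $\LW=\RW$, and, by the commutation fact applied with $H$ in place of $G$, negation reflects and swaps the two walls, so $-H\in\M$ exactly when $H\in\M$. Applying Lemma~\ref{lem: MvsO mast vs options} now shows that $\LW(-G)$ does not depend on any Left option of $-G$ other than $-H$; translating back through the reflection above gives the claim for $\RW(G)$.

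I do not expect a genuine obstacle: the whole argument is a mechanical dualization, and the only step deserving an explicit line is the commutation $(-G)_p=-(G_p)$, since the paper states the analogue only for cooling. As an alternative that avoids even this, one can mirror the proof of Lemma~\ref{lem: MvsO mast vs options} verbatim: for every $G^R\in G^{\mathcal R}$ one has
\[
  \Ls(H_p)=\Ls(H)=\Rs(H)\le\Rs(G^R)\le\Rs({G^R}_{\!p})\le\Ls({G^R}_{\!p}),
\]
using $H\in\M$, minimality of $\Rs(H)$, Proposition~\ref{prop: L(G_t)<L(G)}, and Proposition~\ref{prop: Lstop>Rstop}; hence the minimum defining $\Rs(G_p)$ is attained at $H$, so $\Rs(G_p)=\Ls(H_p)+p$ for all $p\ge0$, which involves no other Right option.
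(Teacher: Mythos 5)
Your proposal is correct, and your closing alternative is exactly what the paper does: its proof of the corollary is literally ``the same as that of Lemma~\ref{lem: MvsO mast vs options}'', i.e.\ the mirrored chain $\Ls(H_p)=\Ls(H)=\Rs(H)\le\Rs(G^R)\le\Rs({G^R}_{\!p})\le\Ls({G^R}_{\!p})$ showing the minimum defining $\Rs(G_p)$ is attained at $H$. Your primary route via $-G$ is also sound, but it needs the commutation $(-G)_p=-(G_p)$, which the paper never states; you correctly flag and justify it, though it makes the argument longer than the direct dualization the paper intends.
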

\begin{proof}
    The proof is the same as that of Lemma~\ref{lem: MvsO mast vs options}. %\an{do we need this? we are already saying before this corollary that it's a similar result and it's a corollary, so I thought we don't need proof for that.}\ur{If we don't want to put a proof environment, we instead make and observation.}\an{I see, then may be we can keep it as it is}
\end{proof}    
Combinatorial games are divided into 3 categories depending on the temperature:

\begin{itemize}
    \item \textbf{Hot}: A game $G$ is hot, if $t(G)>0$;
    \item \textbf{Tepid}: A game $G$ is tepid, if $t(G)=0$;
    \item \textbf{Cold}: A game $G$ is cold, if $t(G)<0$.
\end{itemize}

%\begin{definition}[Strongly Separable Ruleset]
%    A ruleset is a strongly separable if the positions of the ruleset can be partitioned into $n\ge 2$ sets such that every sub-position of a game from one particular set belongs to the same set.
%\end{definition}

%%%%%%%%%%%%%%%%%%%%%%%%%%%%%%%%%%%%%%%%%%%%%%%%%%%%%%%%%%%%%%%%%%%%%%%%%%%%%%%%%%%%%%%%%%%%%%%%%%%%%%%%%%%%%%%%%%%%%%%%%%%%%%%%%%%%%%%%%%%%%%%%%%%%%%%%%%%%%%%%%%%%%

\section{Sherwood Organization}\label{sec: organized RH}

Let us formalize the ruleset {\sc Robin Hood}. We denote by $[a]$ the set $\Set{1,2,\dots,a}$.
% \begin{definition}[Robin Hood]
%      \sout{A single-heap {\sc Robin Hood} game $(n;a,b)$, where $n$ is the heap size, $a$ and $b$ are the wealths of Left and Right players, respectively, is played according to the following rules:}
% \begin{enumerate}
%     \item \sout{if $n=0$ or $a=0=b$, neither player has any option;}
%     \item \sout{if $a>0=b$, the Left options are of the form $(n-i;a,0)$, $0<i\leq \min\Set{n,a}$, and Right has no option. Similarly, if $b>0=a$, the Right options are of the form $(n-i;0,b)$, $0<i\leq\min\Set{n,b}$, and Left has no option;}
%     \item \sout{if $a,b>0$, the Left options are of the form $(n-i;a,b-i)$, for $i\le \min\{n,a\}$, and the Right options are of the form $(n-i;a-i,b)$, for $i\le \min\Set{n,b}$.}
     
% \end{enumerate}    
% \end{definition}

\begin{definition}[Robin Hood]
     Let $n,a,b\in \Nat_0$. A single-heap {\sc Robin Hood} game $(n;a,b)$, where $n$ is the heap size, $a$ and $b$ are the wealths of Left and Right players, respectively, has the following options:
\begin{enumerate}
    \item the Left options are $(n-i;a,b-i)$ where $i\in\left[\min\Set{n,a}\right]$; 
    \item the Right options are $(n-j;a-j,b)$ where $j\in \left[\min\Set{n,b}\right]$. 
\end{enumerate}
\end{definition}

By convention, non-positive wealth is deemed to be 0 because a player with 0 or negative wealth cannot make a move. 
Recall that this game can also be played on multiple heaps. A {\sc Robin Hood} game on multiple heaps is same as the disjunctive sum of single heap {\sc Robin Hood} games. %\todo{I think we can introduce Robin Hood with multiple heap with only one pair of wealth, similar results hold for this variation.}
%\ur{Typically that is a very different situation, because the heaps are not independent. If there is only one pair of wealth that controls how many tokens a player is allowed to remove from a heap, then the situation changes, because the heaps are not independent any longer. That is, there would be notion of game value of a single heap (that I am aware of). Of course game values, temperatures etc could be computed for the full dependent compound of games, but that may be viewed as less natural. I suggest to only study the outcomes of such a wealth compound. This might be nice to leave as an open problem in the last section.}

To understand a multiple heap game, it suffices to know the game values of single heap games. For this reason, from now onward in this paper, we will only consider the single heap games. However, the game values (a.k.a. canonical forms) often quickly become intractable. Let us give some intuition ``why?''.

The canonical form of the game $(4;\,2,2)$ is $G=\pm (2,\{2\mid\pm 1\})$. Suppose that we play the sum $(4;\,2,2)+(3;\,1,2)=G+\{\pm 1\mid -2\}$. Left loses if she plays to $2+\{\pm 1\mid -2\}$, but she wins if she plays to $\{\pm 1\mid -2\}+\{2\mid \pm 1\}$. In a sense, the most likely `best' move can fail depending on the surrounding context. The standard abstract way to explain this type of situation is that, indeed the game $2$ is incomparable with the game $\{2\mid \pm 1\}$, and neither option reverses out (which has to be checked). Similar arguments show that generic games of the form $(n;\, b,b)$ have $b$ canonical options for each player (all options are sensible depending on situation). Thus, the complexity of canonical form games quickly becomes intractable. However, there are some very obvious options that never come into play. 
%\ur{I suggest that we mention rcf only in the open problems section, and give some examples there of simple rcfs.}\an{okay}
%\todo{discuss domination in section 3 and introduce GLi and GRi just after the formal definition of Robin Hood.}
\begin{proposition}\label{prop: dominated options}
    Consider the {\sc Robin Hood} game $(n;a,b)$, with $n>b$. If $a > b$, then  the Left options $(n-i;a,0)$, $b<i\leq \min\Set{n,a}$, are dominated by the Left option $(n-b; a,0)$. 

   % \ur{Consider the {\sc Robin Hood} game $(n;a,b)$, with $n>a>b$. Then  the Left options $(n-i;a,0)$, $b<i\leq  a$, are dominated by the Left option $(n-b; a,0)$.} 
\end{proposition}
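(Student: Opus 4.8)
The plan is to use the domination criterion for Left options: a Left option $G^{L_1}$ dominates $G^{L_2}$ if $G^{L_1} \geq G^{L_2}$. So I need to show that $(n-b;a,0) \geq (n-i;a,0)$ for every $i$ with $b < i \leq \min\{n,a\}$. The cleanest way is to exhibit a winning strategy for Left (moving second) in the difference game $(n-b;a,0) - (n-i;a,0) = (n-b;a,0) + (i-n;0,a)$, where I am using that the negative of a {\sc Robin Hood} position swaps the roles of the two players and negates the wealth pattern (the negative of $(m;a,b)$ is $(m;b,a)$ with signs handled by the normal-play convention).

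First I would observe that in a position of the form $(m;a,0)$, Right has wealth $0$ and therefore cannot move at all; such a position is simply the integer game $m$ (Left can remove tokens one at a time, reducing Right's wealth further into the negatives, which by convention stays $0$, so each summand contributes exactly $m$ free moves to Left). This is the key simplification: once Right's heap wealth hits $0$, the component degenerates to a number. Hence $(n-b;a,0)$ equals the integer $n-b$ and $(n-i;a,0)$ equals the integer $n-i$. Since $i > b$ implies $n - b > n - i$ (as integers), we get $(n-b;a,0) > (n-i;a,0)$, which is strictly stronger than domination. So the proposition follows immediately from the fact that these positions reduce to integers, together with the monotonicity $i>b \Rightarrow n-i < n-b$.

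The one point that needs a small argument is \emph{why} $(m;a,0)$ equals the integer $m$: I should check that the only options are Left options $(m-1;a,0), (m-2;a,0),\dots,(m-\min\{m,a\};a,0)$ (Right has no options since $\min\{m,b\}=\min\{m,0\}=0$ means Right's move set is indexed by $[0]=\varnothing$), and then induct on $m$ to conclude the canonical form is the integer $m$; in fact since every Left option is itself an integer and the Left options are $m-1, m-2, \dots$, only $m-1$ survives (the rest are dominated), giving canonical form $\{m-1 \mid \varnothing\} = m$. Alternatively, and more in the spirit of this section, one can bypass canonical forms entirely by comparing the two positions directly via their Left options: $(n-i;a,0)^{L}$ ranges over $(n-i-k;a,0)$ for $k \in [\min\{n-i,a\}]$, each of which is $\leq (n-b;a,0)$ by the integer comparison, while conversely $(n-b;a,0)^{L} = (n-b-1;a,0) \geq (n-i;a,0)$ again by the integer comparison — so neither player wants to move first in the difference, establishing the inequality.

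I expect no real obstacle here; the main thing to be careful about is the bookkeeping of the normal-play convention when wealth would go negative (ensuring that $(m;a,0)$ genuinely behaves as the integer $m$ and not something subtler), and making sure the index-set conventions $[\,0\,]=\varnothing$ and $[\min\{n,a\}]$ are handled correctly so that the claimed range $b < i \leq \min\{n,a\}$ is exactly the set of ``extra'' Left options beyond $(n-b;a,0)$ when $a>b$. Since $a>b$ guarantees $\min\{n,a\} = \min\{n,a\} > b$ is possible precisely when $n>b$ (the hypothesis), the stated range is nonempty exactly in the regime considered, and the proof is complete.
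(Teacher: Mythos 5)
Your proof is correct and follows the same route as the paper: the paper's one-line argument also rests on the observation that $(m;a,0)$ is just the integer $m$ (Right, having zero wealth, cannot move), so the comparison reduces to $n-i<n-b$ for $i>b$. Your additional justification of why $(m;a,0)=m$ is a reasonable elaboration of what the paper leaves implicit (it is essentially Theorem~\ref{thm: RH positions}(\ref{item:thm: RH positions:2})), but it is not a different approach.
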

\begin{proof}
For $i>b$, $(n-i;a,0)=n-i<n-b=(n-b;a,0)$. %\an{There is an upper bound on $i$, as Left can remove at most her own wealth, or if the heap size is smaller, then she can only remove that maximum amount.} \ur{We care only about large $n$. Hence my restatement, without the "if" clause.}\an{until section 5, we do not take n to be large enough. So, I think at this point we should write min(n,a). Theorem 4.4 clears that for large n, it will be hot.}
\end{proof}
    
Thus, from now onward, we only consider the non-dominated options (with $i\le \min\Set{a,b}$). 
% \begin{definition}[Robin Hood]
%      A {\sc Robin Hood} game $(n;a,b)$ with single heap of size $n$on a single heap of size $n$, wealth of Left and Right players equals $a$ and $b$ respectively is denoted by $(n;a,b)$. The rules of the game are as follows:
% \begin{enumerate}
%     \item if $n=0$ or $a=0=b$, neither player has any option;
%     \item if $a>0=b$, the Left options are $(n-i;a,0)$, $0<i\leq \min\Set{n,a}$ and Right has no option. Similarly, if $b>0=a$, the Right options are $(n-i;0,b)$, $0<i\leq\min\Set{n,b}$ and Left has no option.
%     \item if $a,b>0$, the only Left option is $(n-\gamma;a,b-\gamma)^*$ and the only Right option is $(n-\gamma;a-\gamma,b)^*$ where $\gamma=\min\Set{n,a,b}$;
     
% \end{enumerate}    
% \end{definition}
\begin{proposition}\label{prop: -heap<=stop,RH<=heap}
    Consider $n,a,b\in \Nat_0$ and let $G=(n;a,b)$. Then\begin{enumerate}
        \item $-n\leq G\leq n$;\label{prop: -heap<=stop,RH<=heap (1)}
        \item $-n\leq \Ls(G)\leq n$ and $-n\leq \Rs(G)\leq n$;\label{prop: -heap<=stop,RH<=heap (2)}
        \item $-G=(n;b,a)$.\label{prop: -heap<=stop,RH<=heap (3)}
    \end{enumerate}
\end{proposition}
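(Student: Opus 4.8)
The plan is to establish the three items in the order (3), (1), (2), each by an induction on the heap size $n$ (and in (1) using (3) as a black box).

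\emph{Item (3), $-G=(n;b,a)$.} I would prove this by induction on $n$, showing in fact that $-(n;a,b)$ and $(n;b,a)$ are \emph{identical} as literal games. The base case $n=0$ is immediate, since $(0;a,b)=0=(0;b,a)$. For the inductive step, recall that the Left (resp. Right) options of $-G$ are exactly the negatives of the Right (resp. Left) options of $G$. From the definition of {\sc Robin Hood}, a Left option of $(n;a,b)$ has the form $(n-i;a,\max\{b-i,0\})$ with $i\in[\min\{n,a\}]$, and a Right option has the form $(n-j;\max\{a-j,0\},b)$ with $j\in[\min\{n,b\}]$; every such option has heap size strictly less than $n$, so the induction hypothesis applies to it. Negating and rewriting via the hypothesis, one gets that the Left options of $-(n;a,b)$ are the games $(n-j;b,\max\{a-j,0\})$, $j\in[\min\{n,b\}]$, and its Right options are the games $(n-i;\max\{b-i,0\},a)$, $i\in[\min\{n,a\}]$; these are exactly the Left and Right options of $(n;b,a)$ after matching the indices. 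The only point requiring a little care is the clamping of wealths to $0$, but $\max\{b-i,0\}$ occurs on both sides in the same slot, so this is harmless.

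\emph{Item (1), $-n\le G\le n$.} The crux is the one-sided bound $(n;a,b)\le n$ for every pair of wealths; the other bound $(n;a,b)\ge -n$ is then equivalent, via (3), to $(n;b,a)\le n$, which is the same statement applied to the swapped wealths. To prove $(n;a,b)\le n$, i.e. $n-(n;a,b)\ge 0$, I would show that Left, moving second, wins $n+\bigl(-(n;a,b)\bigr)$, by induction on $n$. The key point is that the integer component $n$ offers Right no move at all (its only option, $n-1$, is a Left option), so Right's first move must be inside $-(n;a,b)$, reaching $n+\bigl(-(n-i;a,\max\{b-i,0\})\bigr)$ for some $1\le i\le\min\{n,a\}$. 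Left then decrements the integer to $n-1$ (legal since $n\ge 1$), landing at
\[
(n-1)+\bigl(-(n-i;a,\max\{b-i,0\})\bigr)=(i-1)+\Bigl((n-i)+\bigl(-(n-i;a,\max\{b-i,0\})\bigr)\Bigr),
\]
which is $\ge 0$: the summand $i-1$ is a nonnegative integer, and by the induction hypothesis at heap size $n-i<n$ the bracketed term $(n-i)-(n-i;a,\max\{b-i,0\})$ is $\ge 0$. If $a=0$ or $n=0$ then $-(n;a,b)$ has no Right option, Right cannot move anywhere, and the claim is trivial; this also covers the base case $n=0$.

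\emph{Item (2).} This is a formality given (1): by Proposition~\ref{prop: relation between stop and game}\eqref{item:prop: relation between stop and game:2}, $G\ge -n$ yields $\Ls(G)\ge\Rs(G)\ge -n$, and $G\le n$ yields $\Rs(G)\le\Ls(G)\le n$, so both stops lie in $[-n,n]$.

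The main obstacle is organizational rather than conceptual: in (3) one must line up the two option sets exactly under the clamping convention and the index relabeling, and in (1) one must phrase the induction so that the hypothesis is available at the reduced heap size $n-i$ (which it always is, since every {\sc Robin Hood} move strictly shrinks the heap). Morally the whole proposition just records that the heap size caps how many free moves either player can ever extract from a single heap, so no deep argument is expected.
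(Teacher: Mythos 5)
Your proposal is correct and follows essentially the same route as the paper: Left wins the relevant sums by always answering on the integer component (you merely formalize this pairing strategy as an induction on the heap size and obtain the lower bound from item (3) rather than by the symmetric strategy), item (3) is the observation that negation swaps the players' roles and hence the wealths, and item (2) follows from item (1) via Proposition~\ref{prop: relation between stop and game}, exactly as in the paper.
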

\begin{proof} 
    Left can win $n -(n;\,a,b)$ and $(n;\,a,b)+n$ playing second, by playing on the number in every turn. The 2\textsuperscript{nd} item follows using the 1\textsuperscript{st} item along with Proposition~\ref{prop: relation between stop and game}. At last, the negative of a game is swapping the rules of the players.
\end{proof}

\begin{theorem}[Robin Hood Positions]\label{thm: RH positions}
    Let $G=(n;a,b)$ be a {\sc Robin Hood} position, with $a\ge b$. We have the following facts:
    \begin{enumerate}
        \item\label{item:thm: RH positions:1} $G=0$ if $a=b=0$;
        \item\label{item:thm: RH positions:2} $G=n$ if $a>0$ and $b=0$ and $G=-n$ if $a=0$ and $b>0$;
        \item\label{item:thm: RH positions:3} $G=*n$ if $n\leq b$;
        \item\label{item:thm: RH positions:4} $G$ is hot, otherwise;% {\color{red} unable to prove it, I have proof of the same when $n\geq a+b$, but this is a stronger statement, it's true for $n>\min\Set{a,b}$}{\color{blue} it is good to have some open problems at the end} {\color{red} the proof of this is there in the pdf sent by Carlos sir, but I could not understand it.}{\color{blue} Could you put that file in here, so we all can read it? I think it is doable.}
        %\item For sufficiently large $n$, $G$ is a Homber;
        %\item {\sc Robin Hood} is almost separable;
        \item\label{item:thm: RH positions:5} $\Ls(G)=\max_{G^L} \Rs(G^L)$ and $\Rs(G)=\min_{G^R} \Ls(G^R)$, if $n>b$.
    \end{enumerate}
\end{theorem}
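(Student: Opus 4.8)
The plan is to prove the five items of Theorem~\ref{thm: RH positions} essentially in order, with items (1)--(3) being quick structural observations and items (4)--(5) requiring the substantive work.

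For item (1), if $a=b=0$ neither player has any legal move, so $G=\{\,\mid\,\}=0$. For item (2), if $a>0$ and $b=0$, Right has no move and Left's only non-dominated option (by Proposition~\ref{prop: dominated options}, using $a>b=0$) is $(n-b;a,0)=(n;a,0)$ wait --- more carefully, Left's options are $(n-i;a,0)$ for $i\in[\min\{n,a\}]$, each of which is an integer by induction, and the largest is $(n-1;a,0)=n-1$; hence $G=\{n-1\mid\ \}=n$. The case $a=0$, $b>0$ is symmetric (or apply Proposition~\ref{prop: -heap<=stop,RH<=heap}(3)). For item (3), suppose $n\le b\le a$ (using $a\ge b$, wait we only know $a\ge b$, and $n\le b$, so $n\le b$ and $n\le a$). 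Then both players may remove any number $i\in[n]$ of tokens, and a move removing $i$ tokens leads to $(n-i;a-\cdot,b-\cdot)$ which still satisfies the same hypothesis $n-i\le b-i$ is false in general --- instead I would argue that after removing $i<n$ tokens the remaining position is again of the form ``heap size $\le$ both wealths'' only if I track wealths carefully; the cleanest route is to prove by induction on $n$ that $(n;a,b)=*n$ whenever $\min\{a,b\}\ge n$, using that the options are $(n-i;a,b-i)$ and $(n-i;a-i,b)$ with $i\in[n]$, and noting $\min\{a,b-i\}\ge \min\{a,b\}-i\ge n-i$ (and symmetrically), so by induction every option equals $*(n-i)$; since the option set $\{*0,*1,\dots,*(n-1)\}$ is exactly the option set of $*n$, and $*n$ is its own canonical form, we get $G=*n$.

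Item (5) is the key technical claim: when $n>b$, $\Ls(G)=\max_{G^L}\Rs(G^L)$ and $\Rs(G)=\min_{G^R}\Ls(G^R)$. By Definition~\ref{def: Left and Right stops}, this is exactly the statement that $G$ does not equal a dyadic; by Remark~\ref{obs: characterization of non-number game} it suffices to show $\Ls(G)\ne\Rs(G)$, or more directly that $G$ is not a Number. The plan is: by Proposition~\ref{prop: -heap<=stop,RH<=heap}(3) we may reduce to the case $a\ge b$; if $b=0$ then $G=n$ by item (2) but then $n>b=0$ and we must check the formula directly --- actually when $b=0$ there are no Right options, so the claimed formula for $\Rs$ would be $\min$ over the empty set; so item (5) should be read under the standing assumption that both players have moves, i.e.\ also $a,b\ge 1$ (which the hypothesis $n>b$ together with wanting a Right option forces $b\ge 1$; and $a\ge b\ge 1$). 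So assume $a\ge b\ge 1$ and $n>b$. I would show $G$ is hot (item (4)) and hot games are never Numbers, which gives item (5) as a corollary. To show $G$ is hot: exhibit that the first player strictly benefits. Concretely, I would show $\Ls(G)>\Rs(G)$ by producing a Left option $G^L$ with $\Rs(G^L)$ large and a Right option $G^R$ with $\Ls(G^R)$ small, leveraging the recursion and the fact that removing a token from a large heap barely changes its ``free-move count'' while knocking down the opponent's wealth. The natural candidates: Left plays $(n-b;a,0)=n-b$ (legal since $b\le\min\{n,a\}$), giving $\Rs(G^L)\ge n-b$; symmetrically (via the analogue for Right, using $-G=(n;b,a)$ and $a\le\min\{n,b\}$? no, $a>b$ so $a$ may exceed $b$) --- here I instead let Right play $(n-b;a-b,b)$, removing his full wealth $b$ from Left's, and argue $\Ls$ of that is bounded above away from $n-b$; iterating, one sees the Left stop of $G$ is close to $n-b$ (or $n$) while the Right stop is much smaller, so they differ.

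I expect item (5)/item (4) to be the main obstacle: making the inequality $\Ls(G)\ne\Rs(G)$ rigorous requires either a clean monotonicity lemma (bigger heap, same wealths $\Rightarrow$ bigger stops, which should follow from an easy strategy-stealing/domination argument) or an explicit induction computing rough bounds on $\Ls$ and $\Rs$ in terms of $n,a,b$. The cleanest writeup is probably: (i) prove $G\ge *n$-type comparisons are false and instead show directly $\Ls(G)\ge n-b$ and $\Rs(G)\le $ something like $b$ (or at any rate $<n-b$ once $n$ is large relative to $b$), by induction following the option recursion; (ii) conclude $\Ls(G)>\Rs(G)$, hence $G$ is not a dyadic, hence by Definition~\ref{def: Left and Right stops} the ``otherwise'' branch applies, proving item (5); (iii) since $\Ls(G)>\Rs(G)$ and stops of penalized games interpolate (Proposition~\ref{prop: L(G_t)<L(G)}), the temperature is positive, proving item (4). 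The bookkeeping of edge cases ($n$ only slightly larger than $b$, or $a$ much larger than $b$ so many Left options collapse to integers) is where care is needed, but none of it should be deep.
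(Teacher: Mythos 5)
Your plan follows the paper's proof essentially step for step: items (1)--(3) by induction on the options (the same nimber argument for $n\le b$), item (4) by comparing $\Ls(G)\ge \Rs(n-b;a,0)=n-b$ against $\Rs(G)\le \Ls(n-b;a-b,b)<n-b$, and item (5) as a corollary of hotness (a hot game does not equal a dyadic, so the recursive stop formulas apply), with your reading of item (5) as implicitly requiring $a\ge b\ge 1$ matching the paper's intent. The only place you hedge --- wanting $n$ ``large relative to $b$'' for the strict bound $\Rs(G)<n-b$ --- is unnecessary: when $a=b$ the Right option $(n-b;a-b,b)$ equals $-(n-b)$, and when $a>b$ its Left stop is a maximum of Right stops of positions with heap size at most $n-b-1$, so the strict inequality holds for every $n>b$, exactly as in the paper.
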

\begin{proof}
Starting with the first item, neither player has moves, and the result is trivial.

Regarding the second item, it suffices to check that $(n;a,0) - n$ is a $\mathcal P$-position. If Left starts and moves to $(n-j; a, 0)- n$, Right wins by responding with $(n-j; a, 0) -(n-1)$. This happens because, by induction, $(n - j; a, 0) = (n - j) \leq (n - 1)$. Similarly, if Right starts and moves to $(n; a, 0)-(n-1)$, Left wins by responding with $(n-1; a, 0)-(n-1)$.

In the third item, the Left options are $(n-j;a,b-j)$ where $0<j\leq n$. Since $n-j\leq \min\Set{a,b-j}$, by induction, the Left options are $0, *, *2,\dots,*(n-1)$. By the same argument, the Right options are also the same. Hence, the game $G$ is $\{0,*,\dots,*(n-1)\;|\;0,*,\dots,*(n-1)\}$, which is $*n$. 

For the fourth item, suppose without loss of generality that $a\ge b$. Then $n> b$. Thus, $\Ls(G)\ge \Rs(n-b; a,0)=n-b>0$.

    On the other hand $\Rs(G)\le \Ls(n-b;a-b,b)$ \iffalse$<n-b$, since $b>0$\fi. If $a=b$, then $\Ls(n-b;a-b,b)=-(n-b)<0$, and otherwise, since the heap size $n-b$ will decrease further in computing $\max\{\Rs(n-b-i; a-b,b-i)\}$, $\Rs(G)<n-b$.%\ur{suggestion} \an{looks good.}
    
    %by Proposition \ref{prop: -heap<=stop,RH<=heap}(\ref{prop: -heap<=stop,RH<=heap (1)}), the Left stop of $(n-b,a-b,b)$ must be smaller than or equal to $n-b$. The equality does not hold since Left wins $n-b+(n-b;b,a-b)$ playing first.\todo{I think we need to relook at it. This needs a result that if $G<n$ then $\Ls(G)<n$} Namely, it suffices to play to $n-b+(n-b-1;b,a-b-1)\ge 1$. 
    
    Therefore, $\Rs(G)<\Ls(n-b;a-b,b)<n-b<\Ls(G)$ and consequently, $G$ is hot. %\an{or we can say, since $l>r$, from thermograph, the mast starts above x-axis, hence temperature>0 and it is hot.}

%    \an{Homber is not useful and interesting}\ur{what do you suggest?} The 6\textsuperscript{th} item is true by items (3) and (4) and the facts:
   % \begin{enumerate}
   %     \item \sout{if $n>a+b$ then all the subpositions of $(n;a,b)$ are hot or Numbers, and not Nimbers;}
   %     \item \sout{all the options of Nimbers are Nimbers.}
   % \end{enumerate} \an{I think these items are not required, probably these were there to prove homber and separable.}
    %\todo{But how about 0 game, that's the base game in construction of game values. It is a subposition of both Nimbers and hot games}
    %\todo{$\{*|6\}$ is a hot game, I am not sure if such games appears in Robin Hood or not, but if yes then Robin Hood %is separable `might not' be true. `Might not' means some other conditions on $n$, $a$ and $b$ can give separability.}\ur{If all hot positions are hombers, then the ruleset is separable. We only have this result for large $n$, so "separable" should remain a conjecture, or a question. Or we call a ruleset separabel if it holds for almost all positions. or almost separable... still there are infinitely many positions that we are not sure about.}\todo{The idea is that nimbers only occur whenever the budgets are at least the size of the nim heap. This only happens if the game starts already in such a state.}

    The last item is a consequence of item 4.
\end{proof}

%%%%%PUT hotstrat here%%%%%%%%%

%Now that we know how to compute the temperature of a game, 
As promised in the introduction, let us prove that {\sc Robin Hood} does not belong to hotstrat.
\begin{proof}[Proof of Theorem~\ref{thm:nohotstrat}]
Let $G_1=(11;\,1,1)$ and $G_2=(12;\,2,1)$ be two {\sc Robin Hood} games. Thus, $G_1 = \{10\mid -10\}$ and $ G_2= \{11\mid\{10 \mid -10\} \}$. Hence $t(G_1)=10$  and $t(G_2)=1$. %Let $G= G_1+G_2$.

   % Let $G_1=(11;1,1)$ and $G_2=(12;2,1)$ be two {\sc Robin Hood} games. Then, $G_1 = \{10\mid -10\}$ and $ G_2= \{11\mid\{10 \mid -10\} \}$. By Theorem \ref{thm: main theorem}, the temperature of $G_1$ is $(11-1)=10$ and that of $G_2$ is $(2-1)=1$. If Right starts the game $G$ by playing in the hottest component, i.e., $G_1$, then Left can respond by playing in $G_2$. Thus 
    
    Let $G= G_1+G_2$. If Right starts the game $G$ by playing in the hottest component, $G_1$, to $(10;\,0,1) +G_2$, then Left can respond by playing in $G_2$ to $(10;\,0,1)+(11;\,2,0)$. This game is Left winning (by one land piece). Whereas, if Right starts the game by playing in the cooler component, $G_2$, to the game $(11;\,1,1)+(11;\,1,1)$, Right wins by henceforth mimicking Left's moves.  
\end{proof}

%%%%%%%%%%%%%%%%%%%%%%%%%%%%%%%%%%%%%%%%%%%%%%%%%%%%%%%%%%%%%%%%%%%%%%%%%%%%%%%%%%%%%%%%%%%%%%%%%%%%%%%%%%%%%%%%%%%%%%%%%%%%%%%%%%%%%%%%%%%%%%%%%%%%%%%%%%%%%%%%%%%%%%%%%%%%%%%%%%%%%%%%%%%%%%%%%%%%%%%%%%%%%%%%%%%%%

\section{Robin Hood meets Little John}\label{sec:RH meets Little John}

\renewcommand{\arraystretch}{1.3}%
% \begin{table}[ht]
%     \centering
%     \begin{tabular}{|l|l|l|}
%         \hline
%         $\pmb{\text{Heap Size}(n)}$     & $\pmb{\text{t}(n;5,4)}$  & $\pmb{\text{t}(n;5,3)}$   \\ \hline
%         % $1$  & $0$  & $0$ \\
%         %  \hline
%         %  $2$  & $0$ & $0$  \\
%         %  \hline
%          $3$  & $0$ & $0$  \\
%         \hline
%          $4$  & $0$ & $1/2$  \\
%          \hline
%          $5$  & $1/2$ & $1$  \\
%          \hline
%          $6$  & $5/4$ & $7/4$   \\
%          \hline
%          $7$  & $17/8$ & $19/8$  \\
%          \hline
%          $8$  & $49/16$ & $3$  \\
%          \hline
%          $9$  & $4$ & $7/2$  \\
%          \hline
%          $10$ & $5$ & $4$ \\ 
%          \hline
%          $11$ & $6$ & $4$ \\
%          \hline
%          $12$ & $7$ & $4$ \\
%          \hline
%          $13$ & $8$ & $4$ \\
%          \hline
         
%     \end{tabular}
%     \caption{$t(n;5,4)$ and $t(n;5,3)$ for different $n$}
%     \label{tab: Temperature of $(n;5,4)$ and $(n;5,3)$}
% \end{table}

 % {\sc Robin Hood} appears urgent for large heap sizes, because both players aim to erase the opponent's wealth, so that they can guarantee a number of free moves. \an{at this point we already know that {\sc Robin Hood} positions having sufficiently large heap sizes are hot. suggestion below} The size of their own wealth bounds the degree to which they can reduce the opponent's wealth.
% The maximum amount by which they can diminish the opponent's wealth is their own wealth. 

We know that the \textsc{Robin Hood} positions with sufficiently large heap sizes are hot. By this, we mean that both players benefit from starting the game. Is the advantage the same regardless of the initial move each player makes, or is there a specific move that offers the greatest benefit? Since reduced wealth is disadvantageous for a player, the move that maximizes the reduction of the opponent's wealth is likely to yield the highest advantage.%\ur{Can be simplified a bit, but OK.}\an{simplified a bit.}

\begin{definition}[Little John Move]
    Consider a {\sc Robin Hood} game $(n;\, a,b)$, where $n>0$ and  $\max\Set{a,b}>0$. If $b>0$, Left's Little John option is $(n-\min\Set{n, a, b};\; a,\;b-\min\Set{n, a, b})$, and otherwise  it is $(n-1;\, a,0)$. Right's Little John option is analogously defined.
\end{definition}

\begin{definition}[Little John Path]
    A Little John path is a sequence of alternating play Little John moves. %where all the moves are Little John moves.
\end{definition}
After a Little John path, there will usually be a number of free Little John moves for either player, depending on the size of $n$. 
%[[Should we define also the Little John game here? I mean Wise Robin Hood. Perhaps after all it is Little John's game? If we do that, then the mentioned moves and sequences follow the rules of that game.]]
%\begin{example}
    In Figure~\ref{fig: Little John Path}, we illustrate the Little John path on $(n;a,b)$ if $a/b = 1.4$.
    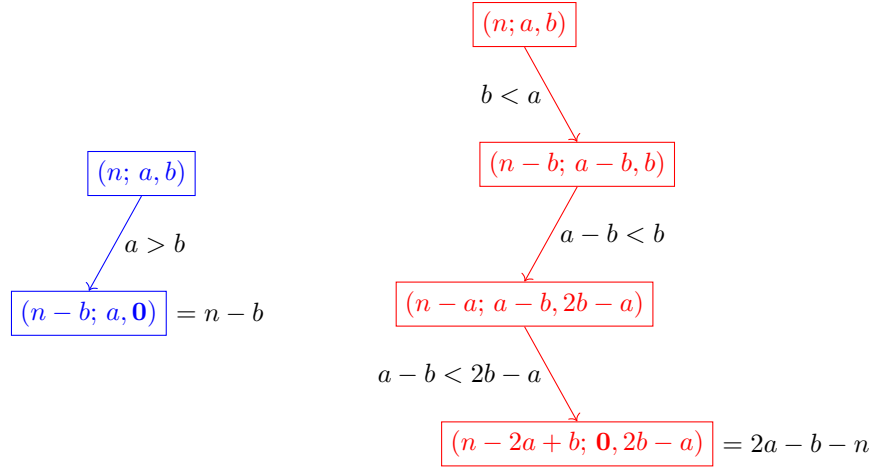
\begin{figure}[ht]
        \centering
        \begin{subfigure}[b]{0.40\textwidth}
        \centering
            \begin{tikzpicture}[anode/.style={draw=blue}, scale=0.7]%, bottom color=red!25, top color=white}]
                 \node[anode] (a) at (0,0) {\color{blue} $(n;\,a,b)$};
                 \draw[->,blue] (a.south) -- +(-1,-1.8) node[anode,anchor=north] (b) {$(n-b;\,a,\bm 0)$} node [midway,  right] (ab1) {\color{black} $a>b$} node [midway, sloped, above] (ab2) { };
                 % \draw[->] (b.south)--+(-1,-1.8) node[anode,anchor=north] (c) {$(n-b-1;\,a,0)$} node [midway, sloped, above] (bc2) { };
                 %\draw[->] (c.south)--+(-1,-2.6) node[anode,anchor=north] (d) {$(n-b-2;\,a,0)$} node [midway, sloped, above] (cd2) { };
                 % \draw[dotted, thick] (c)--+(-0.6,-1.08);
                 \node[anchor=west] (e) at (b.east) {$=n-b$};
                 \node (c) at (-1,-5.4) { };
                 % \node (a) at (0,-8.9) { };
            \end{tikzpicture}
            \caption{The Little John Path when Left starts.}
        \end{subfigure}%
        \hspace{0em}%
        \begin{subfigure}[b]{0.40\textwidth}
        \centering
            \begin{tikzpicture}[anode/.style={draw=red},scale=0.7]%, bottom color=red!25, top color=white}]
                 \node[anode] (a) at (0,0) {\color{red}$(n;a,b)$};
                 \draw[->,red] (a.south) -- +(1,-1.8) node[anode,anchor=north] (b) {$(n-b;\,a-b,b)$} node [midway, left] (ab1) {\color{black} $b<a$} node [midway, above] (ab2) { };
                 \draw[->,red] (b.south)--+(-1,-1.8) node[anode,anchor=north] (c) {$(n-a;\,a-b,2b-a)$} node [midway, above] (bc2) {} node [midway, right] (ab1) {\color{black} $a-b<b$};
                 \draw[->,red] (c.south)--+(+1,-1.8) node[anode,anchor=north] (d) {$(n-2a+b;\,\bm 0,2b-a)$} node [midway, above] (cd2) {} node [midway, left] (ab1) {\color{black} $a-b<2b-a$};
                 % \draw[dotted, thick] (d)--+(0.6,-1.08);
                 \node[anchor=west] (e) at (d.east) {$=2a-b-n$};
            \end{tikzpicture}
            \caption{The Little John Path when Right starts.}
        \end{subfigure}
        \caption{The Little John Paths of $(n;a,b)$, for $a/b = 1.4$ and sufficiently large $n$.}
        \label{fig: Little John Path}
    \end{figure}
%\end{example}
%\sout{If Left and Right have wealths of $27$ and $17$ respectively, the sequence of pairs (Left's wealth, Right's wealth) along the Little John path when Right starts the game is $(27,17)$, $(10,17)$, $(10,7)$, $(3,7)$, $(3,4)$, $(0,4), (0,4), \dots$. Observe that the wealths of Left and Right alternate until one of the players' wealth reaches 0, after which the sequence remains constant. Thus, we can simplify and reduce this sequence to $27,\; 17,\; 10,\; 7,\; 3,\; 4,\; 0$, where the first value belongs to Left, the second to Right, the third to Left, and so on. If we omit the $0^{\text{th}}$ term from the reversed sequence of players' wealth on the Little John path, the resulting sequence follows the pattern where each term is the sum of the two preceding terms. The next section explores the properties of such  sequences.}
If Left and Right have wealths of $27$ and $17$ respectively, the sequence of pairs (Left's wealth, Right's wealth) along the Little John path when Right starts the game is $(27,17)$, $(10,17)$, $(10,7)$, $(3,7)$, $(3,4)$, $(0,4)$. Observe that the wealths of Left and Right changes alternatively until one of the players' wealth reaches 0. Thus, we can simplify and reduce this sequence to $27,\; 17,\; 10,\; 7,\; 3,\; 4,\; 0$, where the first value belongs to Left, the second to Right, the third to Left, and so on. If we omit the $0^{\text{th}}$ term from the reversed sequence of players' wealth on the Little John path, the resulting sequence ($4,\;3,\;7,\;10,\;17,\;27$) follows the pattern where each term is the sum of the two preceding terms. The next section explores the properties of such sequences.
% That's where the concept of Pingala Sequences arises.

% The sequence of the wealth of players on the Little John path when Right starts is $a,\;b,\;a-b,\;b-(a-b),a-b-(b-(a-b)),\dots$. 

%%%%%%%%%%%%%%%%%%%%%%%%%%%%%%%%%%%%%%%%%%%%%%%%%%%%%%%%%%%%%%%%%%%%%%%%%%%%%%%%%%%%%%%
\section{Pingala sequences and their properties}\label{sec:ping}

The previous section points at classical number sequences. 
\begin{definition}[Pingala Sequence]
 The Pingala Sequence $(P_k)_{k\geq 0}$ is given by $P_0=0$, $P_1=1$ and, for all  $k\geq 0$,  $P_{k+2} = P_{k+1}\;+\;P_k $.\footnote{This is commonly known as the Fibonacci sequence. We attribute the sequence instead to Acharya Pingala, an ancient (3rd–2nd century BCE) Indian mathematician and poet who used this sequence in poetry long before Fibonacci lived.}
\end{definition}

% Some properties of the ratios of consecutive numbers in the Pingala sequence, mentioned in the next proposition, will be used later in this paper. [[This is not helpful. Exactly what is intended by this sentence?]]
The wealths of the players may not lie in this sequence so we define a modified version of it. 

\begin{definition}[Modified Pingala Sequence]\label{def: MPS}
    A sequence $(U_k)_{k\geq 0}$ is a Modified Pingala Sequence (MP-sequence) if $U_0$ and $U_1$ are positive integers such that $U_0\geq U_1$, and $U_{k+2} = U_{k+1}+U_{k}$ for all $ k\geq 0$. 
\end{definition}

\begin{remark}
    The Pingala sequence includes 0 as its $0^{\text{th}}$ term. However, as noted earlier, the reversed sequence of players' wealth follows the Pingala sequence pattern only when 0 is excluded. The condition $U_0,\;U_1\in \Z_{>0}$ in the definition of the Modified Pingala sequence ensures that 0 and any negative numbers are omitted from the sequence. %{\color{red} Could you please give it a read?}\ur{If we start Pingala sequence with 1,1,2,... then we are generalizing it.}
\end{remark}

%Next, we will explore some properties of the Pingala and the Modified Pingala sequences to better understand the players' wealth along the Little John Path.
Next, we see some sequences derived from the Pingala sequence and their properties.

\begin{definition}[Ratio Sequences]
    The ratio sequences $(O_k)_{k\geq 0}$ and $(E_k)_{k\geq 0}$ are given by, for all $ k\geq 0$, $O_k = P_{2k+2}/P_{2k+1}$ and $E_k = P_{2k+3}/P_{2k+2}$.
\end{definition}

We refer to $(O_k)_{k \geq 0}$ and $(E_k)_{k \geq 0}$ as the Odd and Even Ratio sequences, respectively.

It is well known that Even and Odd ratio sequences are strictly decreasing and increasing, respectively, and both converge to the golden ratio.% $\phi\coloneqq \frac{1+\sqrt{5}}{2}$.

\begin{comment}
\begin{example}\label{ex: MPS}
    Two examples of MPS are
    \begin{enumerate}
        \item $3,3,6,9,15,24,39,\dots$
        \item $5,4,9,13,22,35,\dots$ 
    \end{enumerate}
\end{example}    
\end{comment}

The following proposition is a routine.
\begin{proposition}\label{prop: unique U_n exists}
    Consider two positive integers $a$ and $b$ such that $a\geq b$. Then, 
    \begin{enumerate}
        \item there exists a unique MP-sequence $(U_k)_{k\geq 0 }$, such that $U_{\mu}=b$ and $U_{\mu+1}=b$, for some $\mu\ge 0$; 
        \item there exists a unique MP-sequence $(V_k)_{k\geq 0 }$, such that $V_{\nu}=a$ and $V_{\nu+1}=b$, for some $\nu\ge 0$.
    \end{enumerate}
\end{proposition}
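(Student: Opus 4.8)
The plan is to isolate one structural fact and derive both claims from it: \emph{every pair of positive integers $(x,y)$ occurs as a pair of consecutive terms $U_\nu=x$, $U_{\nu+1}=y$ in exactly one MP-sequence} (Definition~\ref{def: MPS}). Statement~(1) is then the case $(x,y)=(b,b)$ and statement~(2) the case where $\{x,y\}=\{a,b\}$; the only case carrying real content is when $x<y$, so I will focus on that.

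I would begin with two elementary observations. First, an MP-sequence is determined by its first two terms, since $U_{k+2}=U_{k+1}+U_k$. Second, from index $1$ onward it is strictly increasing: $U_2=U_1+U_0\ge U_1+1>U_1$, and $U_{k+1}>U_k$ for $k\ge1$ by induction; together with $U_0\ge U_1$ this shows the \emph{only} consecutive pair $(U_k,U_{k+1})$ that is non-increasing is the one at $k=0$. In particular, a prescribed pair $(x,y)$ with $x<y$ can only sit at indices $(\nu,\nu+1)$ with $\nu\ge1$.

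For existence I would unfold backwards. If $x\ge y$, take the MP-sequence with $(U_0,U_1)=(x,y)$ and we are done. If $x<y$, the term preceding $x$ is forced to be $y-x$, a positive integer. If $y-x\ge x$, stop and put $(U_0,U_1)=(y-x,x)$; otherwise replace $(x,y)$ by $(y-x,x)$ and repeat, noting that $y-x<x$ makes the next predecessor $x-(y-x)=2x-y$ positive, so the new pair is again a legitimate increasing pair of positive integers. The larger entry drops from $y$ to $x$ whenever we continue, so the sequence of larger entries is strictly decreasing in the positive integers; the process therefore terminates, and it can only terminate via the stopping condition, yielding a valid seed $(U_0,U_1)$ with $U_0\ge U_1\ge 1$ whose MP-sequence contains $(x,y)$. (This recursion is exactly the subtractive Euclidean algorithm; the golden ratio enters only in that the longest descents run along consecutive Pingala numbers.)

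Uniqueness is then immediate: the backward step above is forced at every stage — if $U_\nu=x$ and $U_{\nu+1}=y$ with $x<y$ then $\nu\ge1$ and $U_{\nu-1}=y-x$ — so any two MP-sequences sharing the consecutive pair $(x,y)$ agree all the way down to index $0$, hence have the same seed and coincide; and if $x\ge y$ the pair is pinned at indices $0,1$, again forcing the sequence. The only spot requiring a little care is the bookkeeping in the backward unfolding — checking that every predecessor stays a positive integer and that the descent cannot overshoot the correct seed — but this is routine, which is why the statement can be dispatched quickly.
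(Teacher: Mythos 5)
Your argument is correct and is essentially the paper's own proof: the paper simply asserts that the sequence is obtained "by generating the sequence using the MP-sequence rules," and your backward (subtractive-Euclidean) unfolding with the observation that the only non-increasing consecutive pair sits at index $0$ is exactly that argument, spelled out with the termination and forcedness details the paper omits. (You also correctly treat the intended content despite the typo in item (1), where $U_{\mu+1}=b$ should read $U_{\mu+1}=a$.)
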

\begin{proof}
    The proof follows simply by generating the sequence by using the MP-sequence rules that every term is sum of the last two terms and the starting term in the sequence is greater than equal to the next term.
\end{proof}
%\an{do we need a proof environment for this proposition?}\ur{Well, the standard is that we always use proof environment, unless we cite somewhere else.}
% \vspace{-0.6cm}
As an illustrative example, consider $a=31$ and $b=20$. Then $(U_k)=(7,2,9,11,20,31,51,82,\dots )$, with  $\mu=4$, and $(V_k)= (31,20,51,71,122,\dots )$, with $\nu=0$. 

Let us list a few elementary properties of MP-sequences.
% \vspace{-0.6cm}
\begin{observation}\label{obs: increasing MPS}
   If $(U_k)_{k\geq 0}$ is a MP-sequence, then the following hold:
   \begin{enumerate}
       \item\label{item:obs: increasing MPS:1} $U_1\leq U_0 < U_2 < U_3<\dots<U_n<\dots$; 
       \item\label{item:obs: increasing MPS:2} $U_k \in \mathbb Z_{>0}$ for all $k\geq 0 $.
       % \item\label{item:obs: increasing MPS:3}\an{ $U_k = U_{k+2}-U_{k+1}$ for all $k\geq 0$.}
   \end{enumerate}
\end{observation}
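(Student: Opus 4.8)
\textbf{Proof plan for Observation~\ref{obs: increasing MPS}.}

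The plan is to establish both items by elementary induction directly from the recurrence $U_{k+2}=U_{k+1}+U_k$ and the defining inequality $U_0\ge U_1$, with positivity of $U_0,U_1$. First I would handle item~\eqref{item:obs: increasing MPS:2}: since $U_0,U_1\in\Z_{>0}$ by the definition of an MP-sequence, and the recurrence is a sum of two previous terms, a trivial induction shows every $U_k$ is a positive integer — if $U_k,U_{k+1}\in\Z_{>0}$ then $U_{k+2}=U_{k+1}+U_k\in\Z_{>0}$. This also serves as a lemma feeding into item~\eqref{item:obs: increasing MPS:1}.

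For item~\eqref{item:obs: increasing MPS:1}, the chain $U_1\le U_0<U_2<U_3<\cdots$ splits into two parts. The leftmost inequality $U_1\le U_0$ is exactly condition in Definition~\ref{def: MPS}. The inequality $U_0<U_2$ follows because $U_2=U_1+U_0>U_0$, using $U_1>0$ from item~\eqref{item:obs: increasing MPS:2}. For the tail, I would prove $U_{k+1}<U_{k+2}$ for all $k\ge 0$ by induction: $U_{k+2}=U_{k+1}+U_k$, and since $U_k>0$ (again by item~\eqref{item:obs: increasing MPS:2}), we get $U_{k+2}>U_{k+1}$. Actually this last step needs no induction at all — it is immediate from positivity — so the whole strict-increasing tail is just "adding a positive number increases the value." Stitching the pieces together ($U_1\le U_0$, then $U_0<U_2$, then $U_2<U_3<\cdots$) gives the full ordering.

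I do not anticipate a genuine obstacle here; the statement is deliberately routine (the text calls these "a few elementary properties"). The only point requiring the slightest care is the asymmetry at the start of the chain: the relation between $U_0$ and $U_1$ is $\le$ (equality is allowed, e.g. $U_0=U_1$), whereas from $U_1$ onward — more precisely from $U_0$ to $U_2$ onward — the inequalities are strict. One must therefore not claim $U_1<U_0$, and must note that strictness in $U_0<U_2$ relies on $U_1\ge 1>0$, which is where positivity (item~\eqref{item:obs: increasing MPS:2}) is actually used. I would prove item~\eqref{item:obs: increasing MPS:2} first for exactly this reason, then deduce item~\eqref{item:obs: increasing MPS:1}.
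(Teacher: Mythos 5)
Your proof is correct, and it is exactly the routine argument the paper intends: the paper states this Observation without proof, treating it as an elementary consequence of $U_0,U_1\in\Z_{>0}$, $U_0\ge U_1$, and the recurrence $U_{k+2}=U_{k+1}+U_k$. Your care about the non-strict inequality $U_1\le U_0$ at the start of the chain versus the strict inequalities thereafter matches the statement precisely, so nothing is missing.
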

% \vspace{-0.6cm}
The next proposition determines the $\mu$ and $\nu$ from Proposition~\ref{prop: unique U_n exists}, given $a$ and $b$. First, we relate the ratio of consecutive terms of a MP-sequence with those of the Pingala sequence. 

\begin{lemma}\label{prop: PS-MPS inequality}
    Let $(U_k)_{k\geq 0}$ be an MP-sequence. Then, for all $k\ge 1$, for all $i$ such that $0\leq 2i \leq k-1 $, 
\begin{enumerate}
     \item $U_{k-2i}/U_{k-2i-1}\leq  O_j \Leftrightarrow U_{k+1}/U_k\geq E_{j+i}$, and \label{item:prop: PS-MPS inequality:1}
     \item $U_{k-2i+1}/U_{k-2i}\leq O_j \Leftrightarrow U_{k+1}/U_k\leq O_{j+i}$.\label{item:prop: PS-MPS inequality:2}
\end{enumerate}
\end{lemma}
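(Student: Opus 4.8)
The plan is to prove both biconditionals by induction on $i$, reducing everything to the single recurrence $U_{k+2}=U_{k+1}+U_k$ together with the analogous recurrences for the Pingala sequence. The key algebraic engine will be the observation that dividing the MP-recurrence by $U_{k+1}$ gives $U_{k+2}/U_{k+1} = 1 + U_k/U_{k+1} = 1 + (U_{k+1}/U_k)^{-1}$, so that the map $x \mapsto 1 + 1/x$ relates consecutive ratios; the same map relates $O_j$ and $E_j$ to the next Pingala ratios, namely $E_j = P_{2j+3}/P_{2j+2} = 1 + P_{2j+1}/P_{2j+2} = 1 + 1/O_j$ and $O_{j+1} = 1 + 1/E_j$. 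Because $x \mapsto 1+1/x$ is strictly decreasing on positive reals, each application flips the direction of an inequality, which is exactly why the two statements alternate between $\le O_j$ on the left and $\ge E_{j+i}$ / $\le O_{j+i}$ on the right.

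First I would establish the base case $i=0$. For part~\eqref{item:prop: PS-MPS inequality:1} with $i=0$ I need $U_k/U_{k-1} \le O_j \Leftrightarrow U_{k+1}/U_k \ge E_j$; applying $x\mapsto 1+1/x$ to $U_k/U_{k-1}$ gives $U_{k+1}/U_k = 1 + U_{k-1}/U_k = 1 + (U_k/U_{k-1})^{-1}$, and since the map is strictly decreasing, $U_k/U_{k-1}\le O_j$ is equivalent to $1 + (U_k/U_{k-1})^{-1} \ge 1 + 1/O_j = E_j$, as desired. For part~\eqref{item:prop: PS-MPS inequality:2} with $i=0$ the claim is the tautology $U_{k+1}/U_k \le O_j \Leftrightarrow U_{k+1}/U_k \le O_j$, so there is nothing to prove. (One should double-check the index bookkeeping: the hypothesis $0 \le 2i \le k-1$ ensures all the subscripts $k-2i-1$, $k-2i$, $k-2i+1$ appearing are $\ge 0$, and that $U_{k-2i-1}$ is a genuine term of the sequence when $i$ is as large as allowed.)

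For the inductive step, suppose the statements hold for some $i$ with $2(i+1)\le k-1$, and prove them for $i+1$. The idea is to apply the induction hypothesis not to the pair $(k-2i, k-2i-1)$ but to shift the "window" by two: observe that $U_{k-2i-1}/U_{k-2i-2}$ and $U_{k-2i}/U_{k-2i-1}$ are themselves consecutive ratios of the MP-sequence, so one application of $x\mapsto 1+1/x$ converts between $U_{k-2(i+1)+1}/U_{k-2(i+1)} \le O_j$ and $U_{k-2i}/U_{k-2i-1} \ge E_j = 1+1/O_j$, i.e. its reciprocal statement, and a second application converts that to a statement about $U_{k-2i+1}/U_{k-2i}$, which is governed by the induction hypothesis for index $i$. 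Concretely, for part~\eqref{item:prop: PS-MPS inequality:2} at level $i+1$: $U_{k-2i-1}/U_{k-2i-2}\le O_j \Leftrightarrow U_{k-2i}/U_{k-2i-1}\ge 1+1/O_j = E_j \Leftrightarrow U_{k-2i}/U_{k-2i-1}\le O_{j+1}$ is \emph{not} immediate — here I need the known fact $E_j < O_{j+1}$... actually the cleaner route is: $U_{k-2i}/U_{k-2i-1}\ge E_j$ is equivalent, after applying $x\mapsto 1+1/x$ once more, to $U_{k-2i+1}/U_{k-2i} \le 1+1/E_j = O_{j+1}$, and then part~\eqref{item:prop: PS-MPS inequality:1} of the induction hypothesis at level $i$ (with $j$ replaced by $j+1$) turns $U_{k-2i+1}/U_{k-2i}$-type statements into $U_{k+1}/U_k$-statements. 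I would set this up carefully so each of the two parts at level $i+1$ feeds off the \emph{other} part at level $i$, matching the alternation of $O$'s and $E$'s.

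The main obstacle I anticipate is purely organizational rather than conceptual: keeping the subscript arithmetic on the Pingala side consistent, i.e. making sure that each application of $x\mapsto 1+1/x$ advances the ratio-sequence index by exactly the right amount ($O_j \to E_j \to O_{j+1} \to E_{j+1}\to\cdots$), and that after $2i$ (or $2i+1$) applications one lands on $E_{j+i}$ (resp. $O_{j+i}$) rather than an off-by-one neighbor. It will help to first record as a small sublemma the two identities $E_j = 1 + 1/O_j$ and $O_{j+1} = 1 + 1/E_j$ (both immediate from the Pingala recurrence and the definitions of $O_k, E_k$), together with strict monotonicity and positivity of the map, and then the induction becomes a mechanical two-step ping-pong. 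I would also note that the $\Leftrightarrow$ direction is free throughout because $x\mapsto 1+1/x$ is a strictly decreasing \emph{bijection} of $(0,\infty)$ onto $(1,\infty)$, so no separate argument for each implication is needed.
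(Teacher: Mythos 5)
Your proposal is correct and follows essentially the same route as the paper: fix $k$, induct on $i$, handle $i=0$ by the invert-and-add-one step (your map $x\mapsto 1+1/x$, i.e.\ the identities $E_j=1+1/O_j$ and $O_{j+1}=1+1/E_j$), and in the inductive step apply that map twice to shift $O_j$ to $O_{j+1}$ before invoking the hypothesis at level $i$ with $j$ replaced by $j+1$. One small correction: after the double application the statement you reach, $U_{k-2i+1}/U_{k-2i}\le O_{j+1}$, is resolved by part~(2) (not part~(1)) of the induction hypothesis at level $i$ — each part feeds off itself with $j$ shifted, not off the other part — but since both parts are carried simultaneously in the induction this slip does not affect validity.
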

\begin{proof}
    Let $k$ be fixed. We will prove both statements together by induction on $i$.\\
    \emph{Base Case:} Let $i=0$. 
        \begin{enumerate}[wide, labelindent=0pt, parsep=7pt, topsep=5pt]
            \item Then the following statements are equivalent. 
                \begin{align}
                    \frac{U_{k}}{U_{k-1}} & \leq O_j \left(=  \frac{P_{2j+2}}{P_{2j+1}}\right)\label{eq:6.6.1}\\
                    \frac{U_{k+1}-U_k}{U_k} & \geq  \frac{P_{2j+1}}{P_{2j+2}} \label{eq:6.6.2}\\
                    \frac{U_{k+1}}{U_k} & \geq \frac{P_{2j+3}}{P_{2j+2}}\: (=E_{j+0}).\label{eq:6.6.3}
                \end{align}
                Equation~\eqref{eq:6.6.2} follows by inverting the Equation~\eqref{eq:6.6.1}.
            \item For $i=0$, both sides are the same.
        \end{enumerate} 
    Suppose that the statement is true for $i$ and let $2(i+1)\leq k-1$. Then, for part (1), the following statements are equivalent by induction on $i$.
        %\begin{enumerate}[wide, labelindent=0pt, parsep=7pt, topsep=5pt]
            %\item 
             
            \begin{align}
                    \frac{U_{k-2(i+1)}}{U_{k-2(i+1)-1}}&\leq \frac{P_{2j+2}}{P_{2j+1}} \label{eq:6.6.4}\\
                     \frac{U_{k-2i-1}-U_{k-2i-2}}{U_{k-2i-2}}&\geq \frac{P_{2j+1}}{P_{2j+2}}\label{eq:6.6.5}\\
                     \frac{U_{k-2i-1}}{U_{k-2i-2}}&\geq \frac{P_{2j+3}}{P_{2j+2}}\label{eq:6.6.6}\\
                     \frac{U_{k-2i}}{U_{k-2i-1}}&\leq \frac{P_{2(j+1)+2}}{P_{2(j+1)+1}} =O_{j+1} \label{eq:6.6.7} \\
                     \frac{U_{k+1}}{U_k} &\geq E_{j+1+i} \iffalse =\frac{P_{2j+2i+5}}{P_{2j+2i+4}} = \frac{P_{2j+2(i+1)+3}}{P_{2j+2(i+1)+2}}\fi \tag{by induction}. 
                \end{align}
            Equation~\eqref{eq:6.6.5} follows by inverting Equation~\eqref{eq:6.6.4} and Equation~\eqref{eq:6.6.7} follows by inverting Equation~\eqref{eq:6.6.6} and adding 1.
            This completes the induction.
            We skip the proof of part (2) as it is similar to part (1).
                    % \begin{align*}
                    %     \frac{U_{k-2(i+1)+1}}{U_{k-2(i+1)}}&\leq \frac{P_{2j+2}}{P_{2j+1}} \\
                    %      \frac{U_{k-2i}-U_{k-2i-1}}{U_{k-2i-1}} &\geq \frac{P_{2j+1}}{P_{2j+2}} \\
                    %      \frac{U_{k-2i}}{U_{k-2i-1}} &\geq \frac{P_{2j+3}}{P_{2j+2}} \\
                    %      \frac{U_{k-2i+1}-U_{k-2i}}{U_{k-2i}} &\leq \frac{P_{2j+2}}{P_{2j+3}} \\
                    %      \frac{U_{k-2i+1}}{U_{k-2i}} &\leq \frac{P_{2(j+1)+2}}{P_{2(j+1)+1}}\\
                    %      \frac{U_{k+1}}{U_k}& \leq \frac{P_{2j+2i+4}}{P_{2j+2i+3}} = \frac{P_{2j+2(i+1)+2}}{P_{2j+2(i+1)+1}} \; \tag*{(using induction hypothesis)}
                    % \end{align*} 
        % \end{enumerate}
\end{proof}

% In the above theorem, we are comparing the ratio of consecutive numbers in a MP-sequence with ratios of consecutive numbers in the Pingala sequence. Next, we will define the ratios of consecutive numbers in the Pingala sequence and then give their properties.

%The next Lemma gives the value of $p$ and $q$ given $a$ and $b$.

\begin{proposition}\label{lem: position of b}
    Let $a,b>0$ be integers and let $(U_k)_{k\geq 0}$ be the unique MP-sequence such that $U_{\mu}=b,\; U_{\mu+1}=a$ for some $\mu\geq 0$. Then $\mu$ is determined as follows: 
    \begin{enumerate}
        \item If $\frac{a}{b}< \phi$, then $\mu=2\widehat\mu$, where $\widehat\mu=\min\{k\geq 0: \frac{a}{b}\leq O_k\}$;
        \item If $\frac{a}{b}> \phi$, then $\mu=2\widetilde\mu+1$, where $\widetilde\mu =\min\{k\geq 0: \frac{a}{b}\geq E_k\}$.
    \end{enumerate} 
\end{proposition}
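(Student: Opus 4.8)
The plan is to prove Proposition~\ref{lem: position of b} by relating the position $\mu$ of the pair $(b,a)$ in the MP-sequence to the ratio $a/b$ via the monotonicity of the ratio sequences $(O_k)$ and $(E_k)$, using Lemma~\ref{prop: PS-MPS inequality} as the main engine. The key observation is that $U_{\mu}=b$, $U_{\mu+1}=a$ means $U_{\mu+1}/U_\mu = a/b$, and we must show that this ratio pins down the parity and exact value of $\mu$. First I would recall from Observation~\ref{obs: increasing MPS}\eqref{item:obs: increasing MPS:1} that $U_1\le U_0 < U_2 < \cdots$, so the very first ratio $U_1/U_0$ lies in $(0,1]$ (actually it equals $1$ only when $U_0=U_1$), while all subsequent ratios $U_{k+1}/U_k$ for $k\ge 1$ lie strictly above $1$; moreover $U_2/U_1 = (U_0+U_1)/U_1 \ge 2 > \phi$, and the ratios $U_{k+1}/U_k$ for $k\ge 1$ alternate around $\phi$ — this is precisely the content of the Odd/Even ratio sequences being interlaced with limit $\phi$. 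So $a/b = U_{\mu+1}/U_\mu < \phi$ forces $\mu$ even (or $\mu$ such that the ratio sits on the "Even-ratio side", i.e. $\mu \ge 2$ even), and $a/b > \phi$ forces $\mu$ odd.

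For case (1), assume $a/b < \phi$ so $\mu = 2\widehat\mu$ for some $\widehat\mu\ge 0$. I would apply Lemma~\ref{prop: PS-MPS inequality}\eqref{item:prop: PS-MPS inequality:2} with $k = \mu = 2\widehat\mu$: taking $i$ to run down from $\widehat\mu$ to $0$, the chain of equivalences relates $U_{\mu+1}/U_\mu \le O_j$ to $U_{\mu-2i+1}/U_{\mu-2i} \le O_{j-i}$ at each stage; in particular, walking all the way down we get $a/b = U_{\mu+1}/U_\mu \le O_{\widehat\mu}$ (taking $j = \widehat\mu$, $i = \widehat\mu$, bottoming out at $U_1/U_0 \le O_0 = P_4/P_3 = 3/2$, which holds since $U_1/U_0 \le 1 < 3/2$). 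Conversely, for any $k < \widehat\mu$, I must show $a/b > O_k$; this follows by running the same equivalence in the other direction — if $a/b \le O_k$ for some $k < \widehat\mu$, then descending the chain would force $U_{\mu - 2(\widehat\mu - k)+1}/U_{\mu - 2(\widehat\mu-k)} \le O_0 = 3/2$, but at the relevant index the ratio is some $U_{2m+1}/U_{2m}$ with $m\ge 1$ which exceeds $\phi > 3/2$, a contradiction. Hence $\widehat\mu = \min\{k\ge 0 : a/b \le O_k\}$, as claimed. Case (2) is entirely parallel, using part~\eqref{item:prop: PS-MPS inequality:1} of Lemma~\ref{prop: PS-MPS inequality} to convert the odd-index ratio $U_{\mu+1}/U_\mu$ (with $\mu$ odd) into a statement about $E_{j}$, and bottoming out at $U_2/U_1 \ge E_0 = P_5/P_4 = 5/3$, which holds because $U_2/U_1 = 1 + U_0/U_1 \ge 2 > 5/3$.

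The main obstacle I anticipate is bookkeeping the index arithmetic correctly: Lemma~\ref{prop: PS-MPS inequality} is stated with a free index $j$ and a running parameter $i$ with the constraint $0\le 2i\le k-1$, and one has to choose $k=\mu$ and then carefully track how $j$ shifts by $i$ as one descends, making sure the base of the induction (the $i$ or residual index that lands at $U_1/U_0$ resp. $U_2/U_1$) is handled by the crude bounds $U_1/U_0 \le 1$ and $U_2/U_1 \ge 2$ rather than by the lemma itself. A secondary subtlety is the boundary behavior when $\widehat\mu = 0$ (i.e. $a/b \le O_0 = 3/2$, so the pair $(b,a)$ is already near the start) and, symmetrically, $\widetilde\mu = 0$ — these degenerate cases should be checked directly and are immediate. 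I would also remark that the case $a/b = \phi$ is vacuous since $a,b$ are integers and $\phi$ is irrational, so the two cases in the statement exhaust all possibilities. Once the index arithmetic is set up, the proof is a clean induction/descent argument with no hard analysis.
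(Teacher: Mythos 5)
Your overall route is the same as the paper's: transport the inequality $a/b \le O_k$ (resp.\ $a/b \ge E_k$) down the MP-sequence with Lemma~\ref{prop: PS-MPS inequality} (taking $j=0$) until it becomes a statement about the bottom of the sequence, then invoke Observation~\ref{obs: increasing MPS}, which says $U_{j+1}\le U_j$ can only happen at $j=0$. Your preliminary parity observation (consecutive MP-ratios alternate around $\phi$, so $a/b<\phi$ forces $\mu$ even and $a/b>\phi$ forces $\mu$ odd) is correct but is an extra step; in the paper the parity falls out of the same equivalence chain for free.

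However, two concrete errors break your minimality step as written. First, you misidentify the bottom ratios: $O_0=P_2/P_1=1$ and $E_0=P_3/P_2=2$; the values $3/2$ and $5/3$ you quote are $O_1$ and $E_1$. Second, and more seriously, in case (1) you argue that if $a/b\le O_k$ for some $k<\widehat\mu$, the descent lands at an even-index ratio $U_{2m+1}/U_{2m}$ with $m\ge1$ ``which exceeds $\phi>3/2$'' --- but even-index ratios of an MP-sequence lie \emph{below} $\phi$, by exactly the alternation you invoked earlier, so this contradiction does not exist; moreover, with your value $O_0=3/2$ the inequality $U_{2m+1}/U_{2m}\le 3/2$ can genuinely hold (e.g.\ $U=(2,1,3,4,\dots)$ has $U_3/U_2=4/3$), so your chain produces no contradiction at all. (There is also an index slip: descending from $a/b\le O_k$ to $O_0$ takes $i=k$ steps and lands at index $\mu-2k=2(\widehat\mu-k)$, not $2k$.) The repair is precisely the paper's argument: with the correct $O_0=1$, the descent gives $U_{\mu-2k+1}\le U_{\mu-2k}$ with $\mu-2k\ge 2$, contradicting strict monotonicity in Observation~\ref{obs: increasing MPS}; equivalently, the paper runs the single equivalence chain $a/b\le O_{\widehat\mu} \Leftrightarrow U_{\mu-2\widehat\mu+1}\le U_{\mu-2\widehat\mu} \Leftrightarrow \mu-2\widehat\mu=0$, which settles existence and minimality at once. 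The same correction applies to case (2), where the paper bottoms out at $U_{\mu-2\widetilde\mu}/U_{\mu-2\widetilde\mu-1}\le O_0=1$ via part~(1) of the lemma, rather than at an $E_0$-inequality of the form $U_2/U_1\ge E_0$ (which is not literally an instance of the stated lemma).
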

\begin{proof}
 The indexes $\widehat\mu$ and $\widetilde\mu$ always exist because the sequences $(O_k)_{k\geq0}$ and $(E_k)_{k\geq0}$ are increasing and decreasing, respectively and both converge to $\phi$.
    % \begin{enumerate}
    %     \item[Case 1] $\frac{a}{b}\leq \phi $,\\
    %         Note that $\frac{a}{b}=\phi$ is not possible. This implies there exists $m\geq0$ such that $\frac{a}{b}\leq O_m$ as $(O_k)_{k\geq0}$ is an increasing sequence which converges to $\phi$.
    %         % Let $m=\mathrm{min}\{k\geq 0: \frac{a}{b}\leq O_k\}$ does not exist. then 
    %         % \begin{align*}
    %         %     \frac{a}{b}&>O_k \qquad \text{for all}\; k\geq 0\\
    %         %     \Rightarrow \frac{a}{b}&\geq \phi  \tag*{ by  \ref{prop:P_n converges}}
    %         % \end{align*}
    %         % which is a contradiction to $\frac{a}{b}<\phi$.
    %     \item[Case 2] $\frac{a}{b}>\phi $,\\
    %         Similar to case 1, there exists $m\geq0$ such that $\frac{a}{b}\geq E_m$ as $(E_k)_{k\geq0}$ is a decreasing sequence which converges to $\phi$.
    %         % Let $m = \mathrm{min}\{k\geq 0:\frac{a}{b}\geq E_k\}$ does not exist. Then
    %         % \begin{align*}  
    %         %     \frac{a}{b}&<E_k\quad \text{for all}\; k\geq 0\\
    %         %     \Rightarrow \frac{a}{b}&\leq \phi \tag*{ by  \ref{prop:P_n converges}}  
    %         % \end{align*}
    %         %  which is a contradiction to $\frac{a}{b}>\phi$. 
    % \end{enumerate}
    % Hence, $m$ always exists.\\
    % Here $(U_k)_{k\geq 0}$ is the unique MP-sequence such that $U_{p+1}=a,\; U_p=b$ for some $p\geq 0$. 
    
    \begin{enumerate}[wide, labelindent=0pt, parsep=7pt, topsep=5pt]
        \item The following inequalities are equivalent:
            \begin{align}
                \frac{a}{b} &\leq O_{\widehat\mu} \label{eq:lem: position of b:1}\\
                \frac{U_{\mu+1}}{U_\mu} &\leq  O_{\widehat\mu} \nonumber\\
                 \frac{U_{\mu-2{\widehat\mu}+1}}{U_{\mu-2{\widehat\mu}}} &\leq O_0 \left(=1\right) \label{eq:lem: position of b:2}\\
                 U_{\mu-2{\widehat\mu}+1} &\leq U_{\mu-2{\widehat\mu}}\\
                 \mu-2{\widehat\mu} &=0 & \tag{Obs~\ref{obs: increasing MPS}(\ref{item:obs: increasing MPS:1})}
            \end{align}
            Equation~\eqref{eq:lem: position of b:1} follows by the definition of ${\widehat\mu}$ and Equation~\eqref{eq:lem: position of b:2} follows by Proposition~\ref{prop: PS-MPS inequality}(\ref{item:prop: PS-MPS inequality:2}) for $i={\widehat\mu}$ and $j=0$.
        \item The following inequalities are equivalent:
            \begin{align}
                \frac{a}{b}&\geq E_{\widetilde\mu} \label{eq:lem: position of b:3}\\
                \frac{U_{\mu+1}}{U_{\mu}} &\geq E_{\widetilde\mu} \nonumber\\
                \frac{U_{\mu-2{\widetilde\mu}}}{U_{\mu-2{\widetilde\mu}-1}} &\leq O_0 (=1) \label{eq:lem: position of b:4}  \\
                U_{\mu-2{\widetilde\mu}} &\leq U_{\mu-2{\widetilde\mu}-1}\nonumber\\
                \mu-2{\widetilde\mu}-1&=0 &\tag{Obs~\ref{obs: increasing MPS}(\ref{item:obs: increasing MPS:1})}
            \end{align}  
            Equation~\eqref{eq:lem: position of b:3} follows by the definition of ${\widetilde\mu}$ and Equation~\eqref{eq:lem: position of b:4} follows by Proposition~\ref{prop: PS-MPS inequality}(\ref{item:prop: PS-MPS inequality:1}) for $i={\widetilde\mu}$ and $j=0$.
    \end{enumerate} 
    This completes the proofs of both statements.
\end{proof}

We now introduce the Pingala sequence with alternating signs, which will be used to establish a relation between two MP-sequences and hence two games.

\begin{definition}\label{def: APS}
    The alternating pingala sequence $\left(\bar{P}_k\right)_{k\geq 0}$ is given by $\bar{P}_k=(-1)^{k+1}P_k$.
\end{definition}
\begin{proposition}\label{Prop:sum of pingala}
     The following equalities hold.
     \begin{enumerate}
         \item $\bar{P}_{k+2} = \bar{P}_{k}-\bar{P}_{k+1} \text{ for all } k\geq 0$;
         \item $\sum_{i=1}^k P_i=P_{k+2}-1 \text{ for all } k\geq 1$;
         \item $\sum_{i=1}^{k} \bar{P}_i = 1+(-1)^{k-1}P_{k-1} \text{ for all } k\geq 1$; 
         \item For any MP-sequence $(U_k)_{k\geq 0}$, $\sum_{i=1}^{k} U_i = U_{k+2}-U_1-U_0 \text{ for all } k\geq 1 $.
     \end{enumerate}
\end{proposition}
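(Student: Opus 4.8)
The plan is to prove all four identities of Proposition~\ref{Prop:sum of pingala} by induction on $k$, treating parts (1)--(3) for the Pingala and alternating Pingala sequences first, and then deducing part (4) for a general MP-sequence. All four are elementary, so the main work is choosing a clean order that lets each part feed the next and keeping the parity bookkeeping in the alternating case honest.

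\textbf{Part (1).} This is purely algebraic from Definition~\ref{def: APS}: write $\bar P_{k+2}=(-1)^{k+3}P_{k+2}=(-1)^{k+1}(P_{k+1}+P_k)=(-1)^{k+1}P_k-(-1)^{k+2}P_{k+1}=\bar P_k-\bar P_{k+1}$, using $P_{k+2}=P_{k+1}+P_k$ and $(-1)^{k+3}=(-1)^{k+1}$. No induction is needed.

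\textbf{Part (2).} Induct on $k$. For the base case $k=1$, $\sum_{i=1}^1 P_i=P_1=1$ and $P_3-1=2-1=1$. For the step, assume $\sum_{i=1}^k P_i=P_{k+2}-1$; then $\sum_{i=1}^{k+1}P_i=(P_{k+2}-1)+P_{k+1}=P_{k+3}-1$ by the recurrence.

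\textbf{Part (3).} Again induct on $k$, but now the right-hand side $1+(-1)^{k-1}P_{k-1}$ has a parity-dependent sign, so I will be careful. Base case $k=1$: $\sum_{i=1}^1\bar P_i=\bar P_1=(-1)^2 P_1=1$, and $1+(-1)^0 P_0=1+0=1$. For the step, assume $\sum_{i=1}^k\bar P_i=1+(-1)^{k-1}P_{k-1}$; then $\sum_{i=1}^{k+1}\bar P_i=1+(-1)^{k-1}P_{k-1}+\bar P_{k+1}=1+(-1)^{k-1}P_{k-1}+(-1)^{k+2}P_{k+1}=1+(-1)^{k-1}P_{k-1}-(-1)^{k-1}P_{k+1}=1+(-1)^{k-1}(P_{k-1}-P_{k+1})=1+(-1)^{k-1}(-P_k)=1+(-1)^k P_k$, which is the claimed formula with $k$ replaced by $k+1$. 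Here I used $(-1)^{k+2}=(-1)^k=-(-1)^{k-1}$ and $P_{k+1}=P_k+P_{k-1}$.

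\textbf{Part (4).} For a general MP-sequence $(U_k)_{k\ge 0}$ (Definition~\ref{def: MPS}), induct on $k$. Base case $k=1$: $\sum_{i=1}^1 U_i=U_1$, and $U_3-U_1-U_0=(U_2+U_1)-U_1-U_0=U_2-U_0=(U_1+U_0)-U_0=U_1$. For the step, assume $\sum_{i=1}^k U_i=U_{k+2}-U_1-U_0$; then $\sum_{i=1}^{k+1}U_i=(U_{k+2}-U_1-U_0)+U_{k+1}=U_{k+3}-U_1-U_0$ by the MP-recurrence $U_{k+3}=U_{k+2}+U_{k+1}$. (One can alternatively note that part (2) is the special case $U_k=P_k$ with $U_1-U_0=1-0=1$, but since a general MP-sequence need not satisfy $U_1\le U_0$ being strict the cleanest route is the direct induction above, which uses only the linear recurrence and not the initial-condition inequality.)

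There is no real obstacle here; the only place to stay alert is the sign tracking in part (3), where one must correctly reduce $(-1)^{k+2}$ and combine $P_{k-1}-P_{k+1}=-P_k$. Everything else is a one-line induction step off the defining recurrences.
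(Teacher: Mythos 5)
Your proof is correct and follows exactly the route the paper takes, which simply states that all four identities follow by standard induction on the defining recurrences; your write-up just supplies the routine details (including the sign bookkeeping in part (3)) that the paper omits.
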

\begin{proof}
    The proofs of all the statements follow by standard induction arguments.
\end{proof}

% \an{Now, consider two {\sc Robin Hood} games in which wealth of one of the players differ by 1. To compare them, we give a relation between the MP-sequences generated by their respective pair of wealths.}

Later we will consider two {\sc Robin Hood} games for which the wealth of one of the players differ by one, while the other player's wealth remains the same. To compare them, we here give a relation between the MP-sequences generated by their respective pair of wealths.
% Next, we give a relation between the MP-sequences of two {\sc Robin Hood} games in which the wealth of one of the players differ by 1. This will help us comparing the two games.

\begin{proposition}\label{prop: new old MPS relation}
    Let $(U_n)_{n\geq 0}$ and $(V_n)_{n\geq 0}$ be two MP-sequences. Suppose there exist constants  $\alpha \text{ and } \beta$ such that: \begin{enumerate}
        \item $V_{\alpha}=U_\beta$ and $V_{\alpha+1} = U_{\beta+1}+1$. Then, for all $  0\leq k\leq \min\Set{\alpha,\beta}$, $V_{\alpha-k} = U_{\beta-k}+\bar{P}_{k}$;
        \item $ V_{\alpha}=U_\beta+1$ and $V_{\alpha+1} = U_{\beta+1}$. Then, for all $  0\leq k\leq \min\Set{\alpha,\beta}$,  $V_{\alpha-k} = U_{\beta-k}+\bar{P}_{k+1}$.
    \end{enumerate}      
\end{proposition}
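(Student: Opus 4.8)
The plan is to prove both parts by a single downward induction on $k$, exploiting that $(U_n)$, $(V_n)$, and $(\bar P_k)$ all obey the same three-term linear recurrence. Fix $\alpha,\beta$ as in the hypothesis, set $m=\min\Set{\alpha,\beta}$, and for $0\leq k\leq m$ define the difference sequence $D_k = V_{\alpha-k}-U_{\beta-k}$. The goal is to show $D_k=\bar P_k$ in case (1) and $D_k=\bar P_{k+1}$ in case (2); rearranging these identities yields the stated formulas.

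First I would record the recurrence satisfied by $D_k$. Since $(U_n)$ and $(V_n)$ are MP-sequences, we have $V_{\alpha-k+1}=V_{\alpha-k}+V_{\alpha-k-1}$ and $U_{\beta-k+1}=U_{\beta-k}+U_{\beta-k-1}$ whenever the indices are nonnegative, i.e.\ for $1\leq k\leq m-1$; subtracting these gives $D_{k+1}=D_{k-1}-D_k$. By Proposition~\ref{Prop:sum of pingala}(1), rewritten with the index shifted down by one as $\bar P_{k+1}=\bar P_{k-1}-\bar P_k$ for $k\geq1$, the alternating Pingala sequence satisfies exactly the same recurrence. Hence any two sequences obeying $x_{k+1}=x_{k-1}-x_k$ that agree at $k=0$ and $k=1$ agree on their entire common range, by an immediate two-step induction; this is the whole engine of the proof.

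It then only remains to verify the two base cases. In case (1), $D_0=V_\alpha-U_\beta=0=\bar P_0$, and, using $V_{\alpha-1}=V_{\alpha+1}-V_\alpha$ and $U_{\beta-1}=U_{\beta+1}-U_\beta$, we get $D_1=(V_{\alpha+1}-V_\alpha)-(U_{\beta+1}-U_\beta)=(U_{\beta+1}+1-U_\beta)-(U_{\beta+1}-U_\beta)=1=\bar P_1$; therefore $D_k=\bar P_k$ for all $0\leq k\leq m$. In case (2), $D_0=V_\alpha-U_\beta=1=\bar P_1$, and similarly $D_1=(V_{\alpha+1}-V_\alpha)-(U_{\beta+1}-U_\beta)=(U_{\beta+1}-U_\beta-1)-(U_{\beta+1}-U_\beta)=-1=\bar P_2$; therefore $D_k=\bar P_{k+1}$ for all $0\leq k\leq m$. (When $m=0$ only $k=0$ is claimed, so the base case alone suffices; when $m\geq1$ the induction step produces $D_2,\dots,D_m$ from $D_0,D_1$.)

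There is no genuine obstacle here — this is a routine induction. The only points needing a little care are (i) invoking the recurrence $D_{k+1}=D_{k-1}-D_k$ only for indices that keep $\beta-k-1$ and $\alpha-k-1$ nonnegative, which is precisely the range $k+1\leq m$ in which the conclusion is asserted, and (ii) keeping the alternating signs of $\bar P$ straight, which is why it is cleanest to quote Proposition~\ref{Prop:sum of pingala}(1) for the recurrence rather than re-derive the sign pattern from $\bar P_k=(-1)^{k+1}P_k$. One could equally well skip the auxiliary sequence $D_k$ and run the induction directly on the two claimed identities using the same recurrence manipulations; the bookkeeping is identical.
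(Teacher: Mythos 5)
Your proof is correct and is essentially the paper's argument: the paper also runs a two-base-case induction on $k$ ($k=0,1$) and uses the shared backward recurrence of the MP-sequences and $\bar P$ in the induction step, just written directly on $V_{\alpha-k}$ rather than via your difference sequence $D_k$. The only cosmetic differences are your explicit treatment of part (2) (the paper declares it similar) and your packaging through $D_k$, which you yourself note is dispensable.
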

\begin{proof}
We only prove part 1, as the proof of part 2 is similar. We  induct on $k$. 
    %\begin{enumerate}[wide, labelindent=0pt, parsep=2pt, topsep=5pt]
        %Base case:  $k=0$ or $k=1$.\\
        For $k=0$ the statement follows by the assumption, and for $k=1$ (assume $\min\Set{\alpha,\beta}\geq 1$) we have  
        \begin{align*}
            V_{\alpha-1} &= V_{\alpha+1}-V_\alpha \\
                         &= U_{\beta+1}+1-U_{\beta} \\
                         &= U_{\beta-1}+\bar{P}_{1},
        \end{align*}
        where the first and last equalities follow by Definition~\ref{def: MPS}.
        
        Suppose the statement holds for all $k\leq n \text{ where } n \text{ satisfies } 1\leq n+1\leq \min\Set{\alpha,\beta}$.
        Then, by the induction hypothesis $V_{\alpha-(n-1)} = U_{\beta-(n-1)}+\bar{P}_{n-1} \text{ and } V_{\alpha-n} = U_{\beta-n}+\bar{P}_{n}$, and hence, 
        \begin{align*}
            V_{\alpha-(n+1)} &= V_{\alpha-(n-1)} - V_{\alpha-n}\\
                             &= U_{\beta-(n-1)}-U_{\beta-n} + \bar{P}_{n-1}-\bar{P}_{n}\\
                             &= U_{\beta-(n+1)} + \bar{P}_{n+1},
        \end{align*}
        where the first and last equalities follow by Definition~\ref{def: MPS}. 
        % $$ V_{\alpha-(n+1)} = V_{\alpha-(n-1)} - V_{\alpha-n} = U_{\beta-(n-1)}-U_{\beta-n} + \bar{P}_{n-1}-\bar{P}_{n} = U_{\beta-(n+1)} + \bar{P}_{n+1}$$\\
        % \item We prove it using induction.\\
        % \emph{Base case:}  $k=0$ or $k=1$\\
        % When $k=0$, the statement is followed by the assumption.\\
        % When $k=1$, 
        % \begin{align*}
        %     V_{\alpha-1} &= V_{\alpha+1}-V_\alpha\\
        %                  &= U_{\beta+1}-U_{\beta}- 1\\
        %                  &= U_{\beta-1}+\bar{P}_{2}
        % \end{align*}
        % Let the statement be true for all $k\leq n\leq \mathrm{min}\{\beta,\alpha\}$.\\
        % Assume $1\leq n+1\leq \mathrm{min}\{\beta,\alpha\}$. Then, by the induction hypothesis, $V_{\alpha-(n-1)} = U_{\beta-(n-1)}+\bar{P}_{n} \text{ and }V_{\alpha-n} = U_{\beta-n}+\bar{P}_{n+1}$\\
        % \begin{align*}
        %     V_{\alpha-(n+1)} &= V_{\alpha-(n-1)} - V_{\alpha-n}\\
        %                      &= U_{\beta-(n-1)}-U_{\beta-n} + \bar{P}_{n}-\bar{P}_{n+1}\\
        %                      &= U_{\beta-(n+1)} + \bar{P}_{n+2}
        % \end{align*}
    %\end{enumerate}
  % \an{This completes the proof of both statements.} {\color{purple} again, the same issue.}
\end{proof}

%\sout{As discussed earlier, the sequence of the wealths of the players on the Little John path is an MP-sequence.} 
The next section concerns the stops of {\sc Robin Hood}, which, we will see, are the same as the stops on the Little John path.

%%%%%%%%%%%%%%%%%%%%%%%%%%%%%%%%%%%%%%%%%%%%%%%%%%%%%%%%%%%%%%%%%%%%%%%%%%%%%%%%%%%%%%%
\section{Little John's Ruleset}\label{sec:LJ}

% In section~\ref{sec: basicsetup}, we have seen that the players want to diminish the opponent's wealth. The best local move, the move on which they reduce the opponent's wealth by the maximum possible amount, is the Little John move. Therefore, intuitively, the players would like to play on the Little John path to reach the Left and Right stops. Hence, we first assume this and compute the value of the Left and Right stops of $(n;a,b)$.

%Intuitively, it appears that players aim to decrease their opponent's wealth and the most effective move for this is the Little John move, which maximizes the reduction of the opponent's wealth.
Intuitively, whenever there is something to gain at the end in alternating play, that is, whenever the game is hot, the players seek to minimize the opponent's wealth. Namely, by doing so, they reduce the opponent's wealth reducing power, and so on. So, in order to understand the stops, by following this intuition, it makes sense to focus on the Little John path. This wisdom suggests a variation of {\sc Robin Hood} with exclusively  Little John moves.  %\an{what is opponent's wealth reducing power? okay I got it, probably we can write, they reduce the opponent's power to reduce his/her opponent's wealth in the next turn} {\color{purple} or they reduce the opponent's power}

\begin{definition}[{\sc Little John}] Let $(n;a,b)^*$ denotes a position of the ruleset {\sc Little John} on a heap of size $n$. %and wealth of Left and Right players equals $a$ and $b$ resp. is denoted by  . The rules of the game are as follows:
\begin{enumerate}
    \item if $n=0$ or $a=0=b$, neither player has any option, and otherwise:
    \item if $a,b>0$, the only Left option is $(n-\gamma;a,b-\gamma)^*$ and the only Right option is $(n-\gamma;a-\gamma,b)^*$ where $\gamma=\min\Set{n,a,b}$;
    \item if $a>0=b$, the only Left option is $(n-1;a,0)^*$ and Right has no option. Similarly, if $b>0=a$, the only Right option is $(n-1;0,b)^*$ and Left has no option. 
\end{enumerate}    
\end{definition}

%\subsection{Stops of  Little John[[Little John stops]]}[[In titles it is better to avoid the caps notation.]]

The next propositions and lemma allow us to compute the stops of {\sc Little John}. For simplicity of notations, we remove the extra set of bracket from $\Ls((n;a,b)^*)$ and write $\Ls(n;a,b)^*$ and follow similar notion for Right stop.

\begin{proposition}\label{prop: -heap<=stop,LJ<=heap}
    Consider $n,a,b\in \Nat_0$ and let $G=(n;a,b)^*$. Then \begin{enumerate}
        \item\label{item:prop: -heap<=stop,LJ<=heap:1} $-n\leq G\leq n$;
        \item\label{item:prop: -heap<=stop,LJ<=heap:2} $-n\leq \Ls(G)\leq n$ and $-n\leq \Rs(G)\leq n$.
    \end{enumerate}
\end{proposition}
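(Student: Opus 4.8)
The statement to prove is Proposition~\ref{prop: -heap<=stop,LJ<=heap}: for $G = (n;a,b)^*$ in {\sc Little John}, we have $-n \leq G \leq n$, and consequently $-n \leq \Ls(G) \leq n$ and $-n \leq \Rs(G) \leq n$. This mirrors Proposition~\ref{prop: -heap<=stop,RH<=heap} for {\sc Robin Hood}, so I would follow the same template.

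The plan is to prove part~(1) by exhibiting explicit strategies, and then derive part~(2) as an immediate corollary using Proposition~\ref{prop: relation between stop and game}. For part~(1), it suffices to show that Left wins (playing second) both $n - G$ and $G + n$, where $n$ here denotes the integer game. Consider $G + n$: whenever Right moves, either Right moves inside $G$ — but any {\sc Little John} position reached from $(n;a,b)^*$ has heap size at most $n$, in fact strictly less after a move, since $\gamma = \min\{n,a,b\} \geq 1$ whenever a move exists — or Right decrements the integer component. Left responds by decrementing the integer component $n \to n-1$ (which she can always do as long as it is positive), and we recurse: the new position is $G' + (n-1)$ with $G'$ a {\sc Little John} position of heap size at most $n-1$, so by induction on $n$ we are done. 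The base case $n = 0$ gives $G = 0$ (no options for either player by the first clause of the definition), and $0 \leq 0 \leq 0$ holds. The argument for $n - G$ is symmetric, using that $-G$ in {\sc Little John} is obtained by swapping the roles of Left and Right, just as in Proposition~\ref{prop: -heap<=stop,RH<=heap}(\ref{prop: -heap<=stop,RH<=heap (3)}); alternatively one runs the mirror-image strategy directly.

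For part~(2), once $-n \leq G \leq n$ is established, Proposition~\ref{prop: relation between stop and game}(\ref{item:prop: relation between stop and game:2}) applied to $G \geq -n$ gives $\Ls(G) \geq \Rs(G) \geq -n$, and applied to $G \leq n$ gives $\Rs(G) \leq \Ls(G) \leq n$; combining these yields $-n \leq \Rs(G) \leq \Ls(G) \leq n$, which is the claim.

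I do not expect any serious obstacle here — the result is a soft bound and the proof is a routine strategy-stealing/mirroring induction essentially identical to the {\sc Robin Hood} case already proved. The only point requiring a small amount of care is verifying that every {\sc Little John} option of $(n;a,b)^*$ genuinely has heap size strictly smaller than $n$ (so the induction is well-founded): when a move exists we have $n > 0$ and at least one of $a,b$ positive, and in every clause of the definition the new heap size is $n - \gamma$ with $\gamma \geq 1$, so this is clear. Thus the whole argument is a one-paragraph induction plus a one-line appeal to Proposition~\ref{prop: relation between stop and game}.
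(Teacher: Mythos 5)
Your proof is correct and is essentially the paper's own argument: the paper proves this by noting (as in Proposition~\ref{prop: -heap<=stop,RH<=heap}) that Left wins $G+n$ and $n-G$ playing second by answering every move on the integer component, and then invokes Proposition~\ref{prop: relation between stop and game} for the stops, exactly as you do, with your induction on $n$ just a formalization of that mirroring strategy. The only cosmetic slip is your case ``Right decrements the integer component,'' which is vacuous since a positive integer game has no Right options, so it does not affect the argument.
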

\begin{proof} The proof is same as that of Proposition~\ref{prop: -heap<=stop,RH<=heap}.
    %The proof follows by a simple argument that 
%    Left can win $n -(n;a,b)^*$ and $(n;a,b)^*+n$ playing second, by playing on the number in every turn.  
\end{proof}

\begin{lemma}\label{lem: Wise RH is hot}
    Consider $n,a,b\in \Nat_0 \text{ such that } n\geq \min\Set{a,b} \text{ and } a\geq b>0$. Then
    \begin{enumerate}
        \item $(n;a,b)^*$ is hot;
        \item\label{item:lem: Wise RH is hot:2} $\Ls(n;a,b)^*=\Rs(n-b;a,0)^*$ and $\Rs(n;a,b)^*=\Ls(n-b;a-b,b)^*$.
    \end{enumerate}
\end{lemma}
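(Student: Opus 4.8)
The plan is to prove both claims of Lemma~\ref{lem: Wise RH is hot} simultaneously by induction on the heap size $n$ (or, more precisely, on the quantity $n$, with the wealth pair varying as it does along a Little John path). The key structural feature we exploit is that in {\sc Little John} each player has exactly one option, so the stop recursion collapses: if $G=(n;a,b)^*$ is hot (hence not equal to a dyadic), then $\Ls(G)=\Rs(G^L)$ and $\Rs(G)=\Ls(G^R)$, where $G^L=(n-b;a,b-b)^*=(n-b;a,0)^*$ (using $a\ge b$ so $\gamma=\min\{n,a,b\}=b$, here using $n\ge b$ which follows from $n\ge\min\{a,b\}=b$) and $G^R=(n-b;a-b,b)^*$. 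So item (2) is really just the statement that $G$ is not a number, i.e.\ item (1); the content is showing hotness.

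First I would handle the degenerate-wealth base positions: by Proposition~\ref{prop: -heap<=stop,LJ<=heap} and the defining options, $(n;a,0)^*=n$ and $(n;0,b)^*=-n$ for $n\ge 0$ (a one-line induction, mirroring Theorem~\ref{thm: RH positions}(2)). Then, for the inductive step with $a\ge b>0$ and $n\ge b$, I would compute the two options. The Right option $G^R=(n-b;a-b,b)^*$: if $a-b\ge b$ we are in the inductive situation for a strictly smaller heap, but if $a-b<b$ the roles of the wealths swap, so $-G^R=(n-b;b,a-b)^*$ is in the inductive situation (or $a-b=0$, a degenerate base case giving $G^R=-(n-b)$). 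In all cases induction (together with Proposition~\ref{prop: -heap<=stop,LJ<=heap} bounding stops by $\pm$heap size) gives $\Ls(G^R)=\Ls(n-b;a-b,b)^*<n-b$ when $a>b$, and $\Ls(G^R)=-(n-b)$ when $a=b$; either way $\Rs(G)=\Ls(G^R)\le n-b$, with strict inequality once $b>0$ because the heap must shrink further. The Left option is $G^L=(n-b;a,0)^*=n-b$ by the base case, so $\Ls(G)=\Rs(G^L)=\Rs(n-b;a,0)^*=n-b>0$ since $n>b$ (note $n\ge b$ and if $n=b$ then $G^L$ is a terminal-ish position; one should check $n>b$ is what's actually needed, and it holds because if $n=b$ the game is $*n$ by Theorem~\ref{thm: RH positions}-type reasoning — actually the hypothesis should be read carefully here). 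Combining, $\Rs(G)<n-b=\Ls(G)$, so $\Ls(G)\ne\Rs(G)$, whence by Remark~\ref{obs: characterization of non-number game} $G$ does not equal a dyadic, and then Definition~\ref{def: Left and Right stops} yields exactly the recursion in item (2); hotness (item (1)) follows since $\te(G)$ is the least $p$ with $\Ls(G_p)=\Rs(G_p)$ and at $p=0$ these differ, so $\te(G)>0$.

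The main obstacle I anticipate is bookkeeping the boundary between $n>b$ and $n=b$ (and more generally handling small heaps along the Little John path where the path terminates early because $n$ runs out before the wealth does), together with the case analysis on whether $a-b$ is larger or smaller than $b$ after one Right move — this is where the asymmetry between Left and Right in the statement comes from, and one has to be careful that the "swap" $-G^R=(n-b;b,a-b)^*$ really does land in the inductive hypothesis with a strictly smaller parameter. A clean way to organize this is to induct on $n+a+b$ (or on $n$ alone, since every option strictly decreases $n$ when $b>0$), and to treat "$a=b$", "$a>b$ with $a-b\ge b$", "$a>b$ with $0<a-b<b$", and "$a-b=0$" as separate sub-cases, invoking Proposition~\ref{prop: -heap<=stop,LJ<=heap} for the trivial bounds and the already-established $(n;a,0)^*=n$ identity throughout. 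Everything else is the routine unwinding of the one-option stop recursion.
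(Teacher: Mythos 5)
Your argument is essentially the paper's: the paper's proof defers to the proof of Theorem~\ref{thm: RH positions}, which does exactly what you propose --- bound the Left stop from below via the unique Left option ($\Rs(n-b;a,0)^*=n-b$), bound the Right stop from above via the unique Right option ($\Ls(n-b;a-b,b)^*<n-b$, splitting on $a=b$ versus $a>b$, the strict inequality coming from the heap shrinking one more step), conclude the stops differ, hence the position is not a dyadic and is hot, and then read off item (2) from the stop recursion. Your handling of the degenerate wealths, the wealth swap after a Right move, and the short-heap tail of the Little John path is consistent with this.

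One step needs repair as written. In the inductive step you use the equalities $\Ls(G)=\Rs(G^L)$ and $\Rs(G)=\Ls(G^R)$ in order to show that the stops differ; but by Definition~\ref{def: Left and Right stops} these equalities are the definition of the stops only when $G$ does not equal a dyadic, which is precisely what you are trying to establish, so as literally written the argument is circular. The paper sidesteps this by working with the one-sided inequalities $\Ls(G)\ge\Rs(G^L)$ and $\Rs(G)\le\Ls(G^R)$, which hold for every short game whether or not it is a number (this is the standard fact the paper also relies on in the proof of Theorem~\ref{thm: RH positions}); with them one gets $\Ls(G)\ge n-b>\Ls(G^R)\ge\Rs(G)$, then Remark~\ref{obs: characterization of non-number game} rules out a dyadic, and only at that point do the equalities of item (2) become available. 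Replacing your two equalities by these inequalities fixes the proof with no other change. Finally, your hesitation about $n=b$ is justified: there the position is $\{0\mid 0\}=*$, which is tepid, so item (1) really requires $n>b$ (matching the ``otherwise'' case of Theorem~\ref{thm: RH positions}); the lemma's stated hypothesis $n\ge\min\{a,b\}$ is loose in the same way, and all later uses of the lemma have $n\ge a+b$, where this is immaterial.
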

\begin{proof}
The proof is similar to that of Theorem~\ref{thm: RH positions}.
\end{proof}

\begin{comment}
\an{comment for myself:change this proof}.
    
    We will prove it by induction on $a+b$. \\
    \textit{Base case 1:} $a+b=2$, i.e., $a=1=b$. Then for all $n\geq 2$ $(n;1,1)^*=\{n-1\mid-(n-1)\}$ which is hot.\\
    \textit{Base case 2:} $a+b=3$, i.e., $a=2=b+1$. Then, for all $n\geq 3$, \begin{align*}
        (n;2,1)^*  = \{n-1\mid(n-1;1,1)^*\}
        = \{n-1\mid\{n-2\mid-(n-2)\}\}  
    \end{align*} which is hot.\\
    \textit{Induction hypothesis:} Assume that the statement is true for all $a+b\leq k$.\\
    Now, assume $a+b=k+1$, $n\geq k+1$ and $G=(n;a,b)^*$. Then, 
    \begin{align*}
        \Ls(G)&\geq \Rs(n-b;a,0)^*\\
            &=+(n-b)
    \end{align*}
    To prove that $G$ is hot, we need to prove that $\Rs(G)<n-b$.
    \begin{align*}
        \Rs(G) &\leq \Ls(n-b;a-b,b)^* \\
             &= \begin{cases}
                 -(n-b) & \text{ if } a=b\\
                 \Rs(n-2b;a-b,0)^* & \text{ if } a\geq 2b\\
                 \Rs(n-a;a-b;2b-a)^* & \text{ if } a<2b
             \end{cases} \tag*{(by induction)} \\
             &\leq \begin{cases}
                 -(n-b) & \text{ if } a=b\\
                 n-2b  & \text{ if } a\geq 2b\\
                 n-a & \text{ if } a<2b \tag*{(Prop~\ref{prop: -heap<=stop,LJ<=heap} and \ref{prop: relation between stop and game})}
             \end{cases}\\
             & < n-b
    \end{align*}
\end{comment}

The Left and Right stops of a game $G$ are the same as the game value if and only if $G$ is a Number. Since Numbers are cold, and Lemma~\ref{lem: Wise RH is hot} establishes that {\sc Little John} positions with large $n$ and positive wealths are hot, these positions cannot be Numbers. Consequently, the Left and Right stops of such {\sc Little John} positions differ from their game value. The next lemma compute these stops.

\begin{lemma}[Little John Stops]\label{lem: stop value}
     Consider $n,a,b \in \Nat_0$ and suppose $n\geq a+b$. 
    \begin{enumerate}
        \item\label{item:lem: stop value:1} If $a=b=0$, then  \iffalse for all $n\geq 0$, \fi$\Rs(n;a,b)^*=0$.
        \item\label{item:lem: stop value:2} If $b>0=a$, then $\Rs(n;a,b)^*=-n$ and if $a>0=b$, then  \iffalse for all $n\geq a+b$, \fi$\Rs(n;a,b)^*=n$.
        \item\label{item:lem: stop value:3} If $b\geq a>0$, then $\Rs(n;a,b)^*=-(n-a)$.
        \item If $a>b>0$, let $(U_i)_{i\geq 0}$ denote the unique MP-sequence such that $U_\mu=b$ and $U_{\mu+1}=a$. 
        \begin{enumerate}
            %\item\label{item:lem: stop value:4} If $b=0$, then  \iffalse for all $n\geq 0$, \fi$\Rs(n;a,b)^*=n$.
            \item If\label{item:lem: stop value:5} $\frac{a}{b}<\phi$, then \iffalse for all $n\geq a+b$, \fi
            $\Rs(n;a,b)^*=a+b-n-U_0$.
            \item\label{item:lem: stop value:6} If $\frac{a}{b}>\phi$, then  \iffalse for all $n\geq a+b$, \fi$\Rs(n;a,b)^*=n-(a+b)+U_0$.
        \end{enumerate}
    \end{enumerate}   
\end{lemma}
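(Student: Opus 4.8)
The plan is to prove the statement by induction on $a+b$, using Lemma~\ref{lem: Wise RH is hot}\eqref{item:lem: Wise RH is hot:2} to reduce the computation of the Right stop of $(n;a,b)^*$ along one step of the Little John path, and Proposition~\ref{prop: relation between stop and game}\eqref{item:prop: relation between stop and game:1} to convert between Left and Right stops. Items~\eqref{item:lem: stop value:1}--\eqref{item:lem: stop value:3} form the base of the induction: when a wealth is zero the position is (or quickly becomes) an integer game, so the stop equals $\pm(\text{heap size after stripping free moves})$; when $b\ge a>0$, Lemma~\ref{lem: Wise RH is hot} gives $\Rs(n;a,b)^*=\Ls(n-a;0,b-a)^*$ if $a<b$, which by item~\eqref{item:lem: stop value:2} and Proposition~\ref{prop: relation between stop and game}\eqref{item:prop: relation between stop and game:1} is $-(n-a)$; the case $a=b$ is similar with $\Ls(n-a;0,a)^*=-(n-a)$. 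These I would dispatch quickly.

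For the main case $a>b>0$ I would use Lemma~\ref{lem: Wise RH is hot}\eqref{item:lem: Wise RH is hot:2}: $\Rs(n;a,b)^*=\Ls(n-b;a-b,b)^*$, and then $\Ls(n-b;a-b,b)^*=-\Rs(n-b;b,a-b)^*$ by Proposition~\ref{prop: relation between stop and game}\eqref{item:prop: relation between stop and game:1}. Now $(n-b;b,a-b)^*$ is a {\sc Little John} position whose wealth sum is $b+(a-b)=a<a+b$, so the induction hypothesis applies — provided I check $n-b\ge b+(a-b)=a$, which is exactly the hypothesis $n\ge a+b$. The key bookkeeping point is how the MP-sequence $(U_i)$ for $(a,b)$ relates to the MP-sequence for the new pair: if $a-b\ge b$, i.e.\ $a\ge 2b$, then the pair $(b,a-b)$ (larger, smaller) is $(U_{\mu+1}-U_\mu,\,U_\mu)$ placed consecutively as $(U_{\mu-1},\,U_{\mu-2})$? — more carefully, the new MP-sequence is just $(U_i)$ with indices shifted, so its ``$U_0$'' equals the old $U_0$ as long as the backward Fibonacci recursion hasn't bottomed out, and $U_0$ is preserved; the subtle case is $a<2b$, where $a-b<b$ and one must track whether the reversed order $(a-b,b)$ forces a different initial term. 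Here Proposition~\ref{lem: position of b} pins down the parity of $\mu$ in terms of whether $a/b<\phi$ or $>\phi$, and the condition $a/b<\phi$ versus $a/b>\phi$ translates, after the step $(a,b)\mapsto(b,a-b)$ (which inverts the continued-fraction expansion by one step), into $b/(a-b)$ lying on the appropriate side of $\phi$ — this is where I expect to invoke the ratio-sequence monotonicity and Lemma~\ref{prop: PS-MPS inequality}.

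Concretely, in the subcase $a/b<\phi$ with $a>b$: writing $c=a-b$, we have $\Rs(n;a,b)^*=-\Rs(n-b;b,c)^*$. If $b\ge c$ this is $-\big(-(n-b-c)\big)=n-b-c=n-a$ by item~\eqref{item:lem: stop value:3}; one then checks $a+b-n-U_0=n-a$ is \emph{not} generally what we want, so in fact the regime $a/b<\phi$ with $a\ge 2b$ cannot occur when... — I will need to be careful: $a/b<\phi<2$ forces $a<2b$, so automatically $c=a-b<b$, and the reversed pair $(b,c)$ with $b>c$ is again an ``$a>b>0$'' instance, with sum $<a+b$, and its ratio $b/c=b/(a-b)$; since $a/b<\phi$ iff (by the standard Fibonacci/continued-fraction fact) $b/(a-b)>\phi$, the induction hypothesis item~\eqref{item:lem: stop value:6} gives $\Rs(n-b;b,c)^*=(n-b)-(b+c)+U_0'=(n-b)-a+U_0'$, where $U_0'=U_0$ since the MP-sequence for $(c,b)$ is the same tail sequence. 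Negating gives $\Rs(n;a,b)^*=a+b-n-U_0$, as desired. The symmetric subcase $a/b>\phi$ runs the same way with the roles of items~\eqref{item:lem: stop value:5} and~\eqref{item:lem: stop value:6} swapped, and the base cases $a=b+1$ (where $(b,c)=(b,1)$ and the sequence bottoms out) handled directly.

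The main obstacle I anticipate is the index-tracking for the MP-sequence across the map $(a,b)\mapsto(b,a-b)$: establishing cleanly that ``the initial term $U_0$ is an invariant of this backward Fibonacci step'' and that the dichotomy threshold $\phi$ flips sides exactly once per step (so the two cases~\eqref{item:lem: stop value:5},\eqref{item:lem: stop value:6} exchange under the induction) — this is the content that must be extracted from Propositions~\ref{prop: unique U_n exists}, \ref{lem: position of b}, and Lemma~\ref{prop: PS-MPS inequality}, and getting the edge cases ($a/b=\phi$ impossible for integers, $c=1$, $\mu$ small) right is where the care is needed. Everything else is routine arithmetic with stops.
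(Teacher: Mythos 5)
Your proposal is correct in substance, but it takes a genuinely different route from the paper. The paper's proof unwinds the entire Little John path in one pass: it iterates the identity $\Rs(n;a,b)^*=\Ls(n-b;a-b,b)^*$ from Lemma~\ref{lem: Wise RH is hot} all the way down the MP-sequence, uses Proposition~\ref{lem: position of b} (via the ratio sequences $O_k,E_k$) to pin down the parity of $\mu$ and hence which player's wealth hits zero first, and then evaluates the accumulated heap reduction with the telescoping identity $\sum_{i=1}^{\mu}U_i=U_{\mu+2}-U_1-U_0$ of Proposition~\ref{Prop:sum of pingala}. You instead perform a single backward Fibonacci step $(a,b)\mapsto(b,a-b)$ and induct on $a+b$, which needs only two elementary facts: uniqueness of MP-sequences (Proposition~\ref{prop: unique U_n exists}), so the step shifts $\mu$ down by one and leaves $U_0$ unchanged, and the flip $a/b<\phi\iff b/(a-b)>\phi$ coming from $\phi-1=\phi^{-1}$; this bypasses the parity analysis and the summation formulas entirely, and the heap hypothesis propagates exactly ($n-b\ge a$ iff $n\ge a+b$), so the argument closes cleanly. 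Two small points to tidy when writing it up: in item (3) the Right option of $(n;a,b)^*$ is $(n-a;0,b)^*$, not $(n-a;0,b-a)^*$ (harmless, since both equal $-(n-a)$ when $b>a$); and in the regime $a/b>\phi$ with $a\geq 2b$ the recursion lands in item (3) of the induction hypothesis rather than in item (5), giving $\Rs(n;a,b)^*=n-2b$, so you must observe that $a\ge 2b$ forces $U_{\mu-1}=a-b\ge U_\mu$, hence $\mu=1$ and $U_0=a-b$ by Observation~\ref{obs: increasing MPS}, which makes $n-2b=n-(a+b)+U_0$ as required --- the same bookkeeping you already carry out in the $a/b<\phi$ discussion, but it is not literally ``roles swapped,'' so it deserves its own line.
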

\begin{proof}
    To calculate $\Rs(n;a,b)^*$, we observe the following: 
\begin{enumerate}[wide, labelindent=0pt, parsep=10pt, topsep=5pt]
    \item For all $n\geq 0$, $\Rs(n;0,0)^*=0$ as $(n;0,0)^*=0$.
    \item If $b>0=a$, then by Theorem~\ref{thm: RH positions}(\ref{item:thm: RH positions:2}), %Left cannot make a move, while Right can remove the tokens one by one, granting him $n$ free moves, and hence
    $\Rs(n;0,b)^*=\Rs(-n)=-n$ for all $n\geq 0$. Similarly, if $a>0=b$, then we have $\Rs(n;a,0)^*=\Rs(n)=n$ for all $n\geq 0$ using Theorem~\ref{thm: RH positions}(\ref{item:thm: RH positions:2}).
    
    \item if $0<\frac{a}{b}\leq 1$, then $\Rs(n;a,b)^*=\Ls(n-a;0,b)^*=-(n-a)$. The first equality holds for all $n\geq a+b$, by Lemma~\ref{lem: Wise RH is hot}. The second equality holds as $(n-a;0,b)^*=-(n-a)$ by Theorem~\ref{thm: RH positions}(\ref{item:thm: RH positions:2}).
    
    \item \begin{enumerate}
        \item Case $1<\frac{a}{b}<\phi$.  Recall Lemma~\ref{lem: position of b} which says $\mu=2\widehat\mu $ where $\widehat\mu=\min\Set{k\geq 0\SetSymbol\frac{a}{b}\leq O_k}$. Then, for all $n\geq a+b$,
            \begin{align}
                \Rs(n;a,b)^* &=\Rs(n;\;U_{2\widehat\mu+1},U_{2\widehat\mu})^* \nonumber\\
                &=\Ls(n-U_{2\widehat\mu};\;U_{2\widehat\mu-1},U_{2\widehat\mu})^* \label{eq:4.1}\\
                &=\Rs(n-\left(\sum_{i=2\widehat\mu-1}^{2\widehat\mu}U_i\right);\;U_{2\widehat\mu-1},U_{2\widehat\mu-2})^* \quad  \dots \label{eq:4.2}\\
                &=\Rs(n-\left(\sum_{i=1}^{2\widehat\mu}U_i\right);\;U_1,U_0)^* \label{eq:4.3}\\
                &=\Ls(n-\left(\sum_{i=1}^{2\widehat\mu}U_i\right)-U_1;\;0,U_0)^* \label{eq:4.4}\\
                &=-(n-\left(\sum_{i=1}^{\mu}U_i\right)-U_1) \label{eq:4.5}\\
                &= -(n-U_{\mu+2}+U_0) \label{eq:4.6}\\
                &= -(n-(a+b)+U_0). \nonumber
            \end{align}
        Equation~\eqref{eq:4.1} follows using Observation~\ref{obs: increasing MPS}, Lemma~\ref{lem: Wise RH is hot}(\ref{item:lem: Wise RH is hot:2}), and the fact that, at each step, the heap size is at least the sum of the players' wealths, as $n\geq a+b$, and both decrease equally in each iteration. By repeating this process, we get Equations~\eqref{eq:4.2}, \eqref{eq:4.3} and \eqref{eq:4.4}. Equation~\eqref{eq:4.5} holds using case~(2) of this proof and Proposition~\ref{prop: relation between stop and game}. Equation~\eqref{eq:4.6} is obtained using Proposition~\ref{Prop:sum of pingala} as $\frac{a}{b}>1 \implies \mu>0$. The final equality holds as $U_{\mu+1}=a \text{ and } U_\mu=b$.
        
        \item Case $\frac{a}{b}>\phi$. From Lemma~\ref{lem: position of b}, we have $\mu=2\widetilde\mu+1$ where $\widetilde\mu=\min\Set{k\geq 0\SetSymbol\frac{a}{b}\geq E_k}$. Then, for all $n\geq a+b$,
            \begin{align}
                \Rs(n;a,b)^* &=\Rs(n;\;U_{2\widetilde\mu+2},U_{2\widetilde\mu+1})^* \nonumber \\
                & = \Ls(n-U_{2\widetilde\mu+1};\;U_{2\widetilde\mu},U_{2\widetilde\mu+1})^* \label{eq:4.7} \\
                & = \Rs(n-\left(\sum_{i=2\widetilde\mu}^{2\widetilde\mu+1}U_i\right);\;U_{2\widetilde\mu},U_{2\widetilde\mu-1})^* \quad \dots  \label{eq:4.8}\\
                & = \Ls(n-\left(\sum_{i=1}^{2\widetilde\mu+1}U_i\right);\;U_0,U_1)^* \label{eq:4.9} \\ 
                & = \Rs(n-\left(\sum_{i=1}^{2\widetilde\mu+1}U_i\right)-U_1;\;U_0,0)^* \label{eq:4.10} \\
                & = (n-\left(\sum_{i=1}^{\mu}U_i\right)-U_1) \label{eq:4.11} \\
                & = n-U_{\mu+2}+U_0 \label{eq:4.12} \\
                & = n-(a+b)+U_0. \nonumber   
            \end{align}
        Equation~\eqref{eq:4.7} follows using Observation~\ref{obs: increasing MPS} and Lemma~\ref{lem: Wise RH is hot}(\ref{item:lem: Wise RH is hot:2}) and the fact that, at each step, the heap size is at least the sum of the players' wealths, as $n\geq a+b$, and both decrease equally in each iteration. By repeating this process, we get Equations~\eqref{eq:4.8}, \eqref{eq:4.9} and \eqref{eq:4.10}. Equation~\eqref{eq:4.11} holds using case~(2) of this proof and Equation~\eqref{eq:4.12} follows using Proposition~\ref{Prop:sum of pingala}. The last equality holds as $U_{\mu+1}=a \text{ and } U_\mu=b$.
    \end{enumerate}    
\end{enumerate} The proof is now complete, as every item has been addressed.   
\end{proof}

%Intuitively, having more wealth is not worse for the players. 

Having more wealth does not hurt a {\sc Little John} player.  The next theorem compares players' benefit if wealth of one of the player is increased. This is stop monotonicity with respect to wealth. (Later, in Lemma~\ref{lem: osm (option stop monoto)}, we will encounter also stop monotonicity with respect to {\sc Robin Hood} option played.)

% The fact that having more wealth in the {\sc Little John} game leads to more benefit or lesser loss is proved in the next theorem. More precisely, we are comparing the benefit of Right player if we increase one player's wealth.

\begin{theorem}[Little John Stop Monotonicity]\label{thm: stop monotonicity}
Consider $n,a,b \in \mathbb{N}\cup \Set{0}$. Then, for all $n\geq a+b+1$,
    \begin{enumerate}
        \item $\Rs(n;a,b)^* \leq \Rs(n;a+1,b)^*$;
        \item $\Rs(n;a,b+1)^* \leq \Rs(n;a,b)^*$; 
         \item $\Ls(n;a,b)^* \leq \Ls(n;a+1,b)^*$;
        \item $\Ls(n;a,b+1)^* \leq \Ls(n;a,b)^*$.   
    \end{enumerate}
\end{theorem}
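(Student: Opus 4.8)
\textbf{Proof plan for Theorem~\ref{thm: stop monotonicity}.}

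The plan is to prove all four inequalities simultaneously by induction on $a+b$, exploiting the symmetry $\Ls(n;a,b)^*=-\Rs(n;b,a)^*$ that follows from Proposition~\ref{prop: relation between stop and game}(\ref{item:prop: relation between stop and game:1}) together with the fact that negating a {\sc Little John} position swaps the roles of the two wealths. Indeed, items (3) and (4) are just items (1) and (2) read through this sign-flip: $\Ls(n;a,b)^*\le\Ls(n;a+1,b)^*$ is equivalent to $-\Rs(n;b,a)^*\le-\Rs(n;b,a+1)^*$, i.e. $\Rs(n;b,a+1)^*\le\Rs(n;b,a)^*$, which is the content of (2) with the labels of the two players interchanged; similarly (4) reduces to (1). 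So it suffices to establish (1) and (2). Before doing the induction I would also reduce everything to statements about $\Rs$ only: since $\Ls(n;a,b)^*=-\Rs(n;b,a)^*$, at every step where an $\Ls$ appears I rewrite it as a Right stop of the mirrored position, so that Lemma~\ref{lem: Wise RH is hot}(\ref{item:lem: Wise RH is hot:2}) and the closed-form Lemma~\ref{lem: stop value} become directly applicable.

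The cleanest route is actually to bypass the induction and argue directly from the closed forms in Lemma~\ref{lem: stop value}, checking the inequalities regime by regime. For item (1), fix $b$ and compare $\Rs(n;a,b)^*$ with $\Rs(n;a+1,b)^*$. If $a<b$ we are in case (\ref{item:lem: stop value:3}) (or (\ref{item:lem: stop value:2}) when $a=0$), where $\Rs=-(n-a)$ for the left side; for the right side we are still in case (\ref{item:lem: stop value:3}) if $a+1\le b$, giving $-(n-a-1)\ge-(n-a)$, and the boundary $a+1=b$ is handled by continuity of the formulas (case (\ref{item:lem: stop value:3}) at $a=b$ gives $-(n-b)$, consistent). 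If $a\ge b$ both sides fall under case (4), and this is where the real work lies: I must track how the generating MP-sequence $(U_k)$ changes when $a$ is replaced by $a+1$ while $b$ is fixed. This is precisely what Proposition~\ref{prop: new old MPS relation}(1) is built for --- it tells us $V_{\alpha-k}=U_{\beta-k}+\bar P_k$, so in particular the initial term satisfies $U_0^{\mathrm{new}}=U_0^{\mathrm{old}}+\bar P_{\mu}$ (with appropriate index bookkeeping, and noting the two sequences may have different lengths $\mu$ versus $\mu'$). Then $\Rs(n;a+1,b)^*-\Rs(n;a,b)^*$ reduces, via the formulas $\mp(n-(a+b)+U_0)$ with an extra $+1$ from $a\mapsto a+1$, to an explicit expression in the $\bar P_k$'s whose sign I must pin down; the alternating-sign identities in Proposition~\ref{Prop:sum of pingala}(1),(3) are exactly the tools for this. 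One must also handle the case where increasing $a$ pushes the ratio $a/b$ across the threshold $\phi$ (changing which sub-case of Lemma~\ref{lem: stop value}(4) applies), and the degenerate transitions at $a/b=1$ and between cases (\ref{item:lem: stop value:5})/(\ref{item:lem: stop value:6}); here I would lean on the monotonicity of the ratio sequences $(O_k),(E_k)$ from Proposition~\ref{lem: position of b} to see that only finitely many boundary crossings occur and to check each one. Item (2) is entirely analogous, using Proposition~\ref{prop: new old MPS relation}(2) in place of part (1) since now it is the \emph{smaller} wealth that is incremented (when $b+1\le a$), plus the mirror reduction for the remaining ranges.

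The main obstacle I anticipate is the bookkeeping in the case $a\ge b$ (and its mirror): relating the two MP-sequences generated by $(a,b)$ and $(a+1,b)$ when their indices $\mu$ differ, and then verifying that the resulting $\bar P$-combination has the correct sign across \emph{all} the sub-regimes, including the threshold crossings. A slicker alternative, which I would try first to avoid this, is a direct strategy-stealing / induction argument: prove (1)--(4) together by induction on $a+b$, at each step writing the stop of $(n;a,b)^*$ via Lemma~\ref{lem: Wise RH is hot}(\ref{item:lem: Wise RH is hot:2}) as the stop of a position with strictly smaller total wealth (the Little John option), observing that incrementing $a$ by $1$ either leaves the Little John successor's parameters shifted by a controlled amount or terminates the path one step later, and then invoking the induction hypothesis together with Proposition~\ref{prop: -heap<=stop,LJ<=heap}(\ref{item:prop: -heap<=stop,LJ<=heap:2}) to bound the boundary terms. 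Either way, the heart of the matter is that a single extra unit of wealth propagates down the Little John path in a Fibonacci-controlled fashion, and the sign of its net effect is governed by the alternating Pingala sequence.
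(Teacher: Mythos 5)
Your detailed closed-form route is essentially the paper's own proof: item (1) is checked regime by regime from Lemma~\ref{lem: stop value}, the crossing $\frac{a}{b}<\phi<\frac{a+1}{b}$ is dispatched because the two Right stops have opposite signs, and in the interior regimes the comparison reduces to $U_0$ versus $V_0$ via Proposition~\ref{prop: new old MPS relation} together with Lemma~\ref{lem: position of b} and the monotonicity of $(O_k)$ and $(E_k)$, exactly as you outline (the paper proves only item (1) and treats the remaining items as symmetric, matching your reduction of (3) and (4) to (1) and (2)). The sign bookkeeping you leave open is settled there just as you anticipate: the two indices $\mu,\nu$ have the same parity and their order follows from the ratio-sequence monotonicity, so the difference of stops becomes $\pm\left(V_0-1-U_0\right)$ with $V_0=U_{\mu-\nu}+\bar{P}_\nu$ (or $U_0=V_{\nu-\mu}-\bar{P}_\mu$), and Observation~\ref{obs: increasing MPS} plus the sign of the alternating Pingala term (positive at odd index, negative at even index) yield the required inequality, with $a=b$ handled as a separate small subcase.
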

\begin{proof}
    We prove the first item and the other are similar.
    
    Let $(U_i)_{i\geq 0}$ be the unique MP-sequence such that for some $\mu\geq 0$, $U_\mu=\min\Set{a,b}$ and $U_{\mu+1}=\max\Set{a,b}$. The uniqueness follows by Proposition~\ref{prop: unique U_n exists}. 
    
    Similarly, let $(V_i)_{i\geq 0}$ be the unique MP-sequence such that for some $\nu\geq 0$, $V_\nu=\min\Set{a+1,b}$ and $V_{\nu+1}=\max\Set{a+1,b}$.
    %Let $(U_i)_{i\geq 0}$, $(V_j)_{j\geq 0}$ and $(W_j)_{j\geq 0}$ be the MP-sequences containing $a,b$ in increasing \ur{non-decreasing?} order, $a+1,b$ in increasing \ur{Are we assuming that $a+1<b$?} order and $a,b+1$ in increasing  order respectively. 
    We define $$R_U \coloneqq \Rs(n;a,b)^*,~R_V \coloneqq \Rs(n;a+1,b)^*$$%,~R_W \coloneqq \Rs(n;a,b+1)^*$$
    
    We need to prove $R_U\leq R_V$. There are 5 cases based on the relative values of $a$ and $b$ with respect to the golden ratio. We will use the Stop Values Lemma~\ref{lem: stop value} several times here.
    \begin{enumerate}[wide,  labelindent=0pt, parsep=3pt, label=(\alph*)]
            \item If $a=b=0$, then, by Lemma~\ref{lem: stop value}(\ref{item:lem: stop value:1}), $R_U=0\leq n= R_V$ for all $n\geq 1$.
            \item If $a>0=b$, then, by Lemma~\ref{lem: stop value}(\ref{item:lem: stop value:2}), $R_U=n=R_V$ for all $n\geq 0$.
            \item If $b>0=a$, then, $R_U=-n\leq R_V$ for all $n\geq 0$, where the first equality holds using Lemma~\ref{lem: stop value}(\ref{item:lem: stop value:2}) and second inequality holds using Proposition~\ref{prop: -heap<=stop,LJ<=heap}.
            \item Case $\phi<\frac{a}{b}. \iffalse <\frac{a+1}{b}\fi$ 
            %\ur{By Lemma~\ref{lem: stop value}(3b), it suffices to prove that $U_0$ cannot increase  faster than the increase of the  Left wealth, that is,  $U_0(a+1)\le U_0(a)+1$}\an{What is $U_0(a+1)$, is it the multiplication of $U_0$ and $a+1$ or something else.}\ur{Something else.} \ur{At some point we need to return to this.}
            By Lemma~\ref{lem: stop value}, we have,
                \begin{align}
                    R_U &= n-(a+b)+U_0,\label{eq:sm 1}\\                    
                    R_V &= n-(a+1+b)+V_0,\label{eq:sm 2}
                \end{align} for all $n\geq a+b+1$. Now, to compare $R_U$ and $R_V$, we compare $U_0$ and $V_0$. Recall that $V_{\nu+1}=a+1=U_{\mu+1}+1$ and $V_\nu=b=U_\mu$, as $\frac{a}{b}>\phi$. Thus, by Proposition~\ref{prop: new old MPS relation}, we have, for all $0\leq k\leq \min\Set{\mu,\nu}$,
                \begin{equation}\label{eq:sm 3}
                    V_{\nu-k}=U_{\mu-k}+\bar{P}_k.
                \end{equation} 
                Now, by Lemma~\ref{lem: position of b}, $\mu=2\widetilde{\mu}+1$ and $\nu=2\widetilde{\nu}+1$ where
            $$\widetilde{\mu}=\mathrm{min}\left\{i\geq 0:\frac{a}{b}\geq E_i\right\} \text{ and } \widetilde{\nu}=\mathrm{min}\left\{i\geq 0:\frac{a+1}{b}\geq E_i\right\}.$$
            Recall that the sequence $\left(E_i\right)_{i\geq 0}$, where $E_i = P_{2i+3}/P_{2i+2}$, is decreasing.
                Therefore, $ \widetilde{\nu}\leq \widetilde{\mu}$ and hence, $\nu\leq \mu$. Now, 
                \begin{align}
                    R_V-R_U &= V_0-1-U_0\tag{by Eq~\eqref{eq:sm 1}-\eqref{eq:sm 2}}\\
                            &=U_{\mu-\nu}+\bar{P}_\nu-1-U_0 \tag{by Eq~\eqref{eq:sm 3}}\\
                            &=U_{2(\widetilde{\mu}-\widetilde{\nu})}+\bar{P}_{2\widetilde{\nu}+1}-1-U_0 \tag{$\nu=2\widetilde{\nu}+1,\;\mu=2\widetilde{\mu}+1$}\\
                            &\geq U_0+P_{2\widetilde{\nu}+1}-1-U_0\label{eq:sm 4}\\
                            &\geq 0 \tag{$P_{2\widetilde{\nu}+1}\geq 1$}.
                \end{align} Equation~\eqref{eq:sm 4} follows using the following facts:
                \begin{itemize}
                    \item if $\widetilde{\mu}=\widetilde{\nu}$, then $U_{2(\widetilde{\mu}-\widetilde{\nu})}=U_0$;
                    \item if $\widetilde{\mu}>\widetilde{\nu}$, then by Observation~\ref{obs: increasing MPS}, $U_{2(\widetilde{\mu}-\widetilde{\nu})}\geq U_2>U_0$;
                    \item $\bar{P}_{2\widetilde{\nu}+1}=P_{2\widetilde{\nu}+1}$ by Definition~\ref{def: APS}.
                \end{itemize}
                %     Now, by putting the values of $p$ and $q$we have
                %   $V_{2\beta+2}  = a+1  = U_{2\alpha+2}+1 \text{ and } V_{2\beta +1}  = b  = U_{2\alpha+1}$ using Lemma~\ref{lem: position of b}. Now, by Proposition~\ref{prop: new old MPS relation}, we have, 
                %  $ V_{2\beta+1 - m} = U_{2\alpha+1-m} + \bar{P}_m $ for all $0\leq m\leq 2\beta+1$. Therefore,
                %  \begin{align*}
                %      R_V &= (n-(a+1+b)+V_0)\\
                %          &= (n-(a+b)-1+(U_{2(\alpha-\beta)}+\bar{P}_{2\beta+1}))\\
                %          &= (n-(a+b)+U_{2(\alpha-\beta)}) + (P_{2\beta+1}-1) \tag*{(by Def~\ref{def: APS})}
                %  \end{align*}
                %  If $\alpha=\beta$, then 
                %  \begin{align*}
                %      R_V &= (n-(a+b)+U_0)+(P_{2\beta+1}-1)\\
                %          &= R_U + (P_{2\beta+1}-1) \\
                %          & \geq R_U.
                %  \end{align*} 
                % If $\alpha>\beta$, then $U_{2(\alpha-\beta)}\geq U_2$ by Observation~\ref{obs: increasing MPS} and $U_2>U_0$ because $U_2-U_1=U_0$ where $U_0\geq U_1>0$ by Definition~\ref{def: MPS}. Therefore,
                % \begin{align*}
                %     R_V &= (n-(a+b)+U_{2(\alpha-\beta)}) + (P_{2\beta+1}-1)\\
                %         &> (n-(a+b) +U_0) + (P_{2\beta+1}-1) \\
                %         &\ge R_U.
                % \end{align*}
             
                 \item Case $\frac{a}{b}<\phi<\frac{a+1}{b}$. By Lemma~\ref{lem: stop value}, we have,
                 \begin{align*}
                     R_U & = \begin{cases}
                         a-n  & \text{if } a\leq b~;\\
                         a+b-n-U_0  & \text{if } a>b~,
                     \end{cases}\\
                     R_V & = n-(a+1+b)+V_0,                 
                 \end{align*} for all $n\geq a+b+1$.
                 Hence $R_U<0<R_V$.

                 % \item Case  $\frac{a+1}{b}<\phi$. By Lemma~\ref{lem: stop value}, we have, 
                 % \begin{align*}
                 %    R_U &= \begin{cases}
                 %                    -(n-a)  & \text{if } a\leq b~;\\
                 %                    -(n-(a+b)+U_0) & \text{if } a>b~, 
                 %            \end{cases}\\
                 %    R_V &= \begin{cases}
                 %                    -(n-(a+1))  & \text{if } a+1\leq b~;\\
                 %                    -(n-(a+1+b)+V_0)) & \text{if } a+1>b~,
                 %            \end{cases}
                 % \end{align*} for all $n\geq a+b+1$.
                 
                 \item Case  $\frac{a+1}{b}<\phi$. By Lemma~\ref{lem: stop value}, we have, 
                 \begin{align}
                    R_U &= \begin{cases}
                                    a-n  & \text{if } a\leq b~;\\
                                    a+b-n-U_0 & \text{if } a>b~, 
                            \end{cases}\label{eq:thm:stop monotonicity:1}\\
                    R_V &= \begin{cases}
                                    a+1-n  & \text{if } a+1\leq b~;\\
                                    a+1+b-n-V_0 & \text{if } a+1>b~,\label{eq:thm:stop monotonicity:2}
                            \end{cases}
                 \end{align} for all $n\geq a+b+1$. The following subcases arise depending on the relative values of $a$ and $b$.
                 \begin{enumerate}[parsep=3pt]
                     \item[(i)] If $a+1\leq b$, then $R_U=a-n<a+1-n=R_V$. 
                     \item[(ii)] If $a=b~(\geq 1)$, then $V_{\nu+1}=a+1$ and $V_\nu=b$ and consequently, $V_{\nu-1}=1$. In the case where $b=1$, we have $V_{\nu-1}\geq V_\nu$, and by Observation~\ref{obs: increasing MPS}, it follows that $\nu=1$. Otherwise, $V_{\nu-2}=b-1\geq 1=V_{\nu-1}$ and which implies $\nu=2$. In both cases, we have $V_0\leq b$ and therefore, \begin{align*}
                          R_V & = a+b+1-n-V_0\\
                              &\geq a-n + b+1-b\\
                              & \geq a-n = R_U.     
                     \end{align*}
                 \item[(iii)] If $a>b$, then we must compare $U_0$ and $V_0$ in order to compare $R_U$ and $R_V$. We know $V_{\nu+1}=a+1=U_{\mu+1}+1$ and $V_\nu=b=U_\mu$. Thus, by Proposition~\ref{prop: new old MPS relation}, for all $0\leq k\leq \min\Set{\nu,\mu}$ we have,  
                 \begin{align}
                     V_{\nu - k} = U_{\mu-k}+\bar{P}_k \label{eq:thm:stop monotonicity:3}.
                 \end{align}
                 Now, by Lemma~\ref{lem: position of b}, $\nu=2\widehat{\nu}$ and $\mu=2\widehat{\mu}$ where, 
                 $$\widehat{\mu}= \mathrm{min}\left\{i\geq 0:\frac{a}{b}\leq O_i\right\},\; \widehat{\nu}=\mathrm{min}\left\{i\geq 0:\frac{a+1}{b}\leq O_i\right\}.$$
                 Recall that the sequence $\left(O_i\right)_{i\geq 0}$, where $O_i=P_{2i+2}/P_{2i+1}$, is increasing. Therefore, $\widehat{\mu}\leq \widehat{\nu}$ and consequently, $\mu\leq \nu$. Thus, we have,
                 \begin{align}
                     R_U-R_V &= V_0-1-U_0\tag{by Eq~\eqref{eq:thm:stop monotonicity:1}-\eqref{eq:thm:stop monotonicity:2}}\\
                         &= V_0-1-\left(V_{\nu-\mu}-\bar{P}_\mu\right) \tag{by Eq~\eqref{eq:thm:stop monotonicity:3}}\\
                         &= V_0 - V_{2(\widehat\nu-\widehat\mu)}+\bar{P}_{2\widehat\mu} -1 \tag{$\mu=2\widehat\mu,\; \nu=2\widehat\nu$}\\
                        &\leq V_0-V_0-P_{2\widehat{\mu}}-1\label{eq:thm:stop monotonicity:4} \\
                        &< 0 \tag{$P_{2\widehat{\mu}}\geq 0$}
                 \end{align}
                 Equation~\eqref{eq:thm:stop monotonicity:4} follows using the following facts:
                 \begin{itemize}
                     \item if $\widehat\nu=\widehat\mu$, then $V_{2(\widehat\nu-\widehat\mu)}=V_0$;
                     \item if $\widehat\nu>\widehat\mu$, then by Observation~\ref{obs: increasing MPS}(\ref{item:obs: increasing MPS:1}), $V_{2(\widehat\nu-\widehat\mu)}\geq V_2>V_0$;
                     \item $\bar{P}_{2\widehat\mu}=-P_{2\widehat\mu}$ by Definition~\ref{def: APS}.
                 \end{itemize}
                 
                 % $V_{2\widehat{\nu}+1}  = a+1  = U_{2\widehat{\mu}+1}+1 \text{ and } V_{2\widehat{\nu}}  = b  = U_{2\widehat{\mu}}$.
                 % Then, by Proposition~\ref{prop: new old MPS relation}, we have, 
                 % $ V_{2\widehat{\nu} - m} = U_{2\widehat{\mu}-m}+\bar{P}_m$ for all $0\leq m\leq 2\widehat{\mu}$. Therefore,
                 % \begin{align*}
                 %     R_U &= -(n-(a+b)+U_0)\\
                 %         &= -(n-(a+1+b)+1+V_{2(\widehat{\nu}-\widehat{\mu})}-\bar{P}_{2\widehat{\mu}})\\
                 %         &= -(n-(a+1+b)+V_{2(\widehat{\nu}-\widehat{\mu})})+(-P_{2\widehat{\mu}}-1). \tag*{(by Def~\ref{def: APS})}
                 % \end{align*}
                 % If $\widehat{\mu}=\widehat{\nu}$, then $R_U = R_V -( P_{2\widehat{\mu}}+1) \leq R_V$.\\
                 % Otherwise, by Observation~\ref{obs: increasing MPS}, we have, $V_{2(\widehat{\nu}-\widehat{\mu})}\geq V_2$ and $V_2> V_0$. Hence,  
                 % \begin{align*}
                 %     R_U&< -(n-(a+1+b)+V_0) - ( P_{2\widehat{\mu}}+1) \\   
                 %        &<R_V.
                 % \end{align*}

                 \end{enumerate}
        \end{enumerate}
        This concludes the proof.
\end{proof}

Let us restate this result as we often will use it. 
%\todo{Newly introduced, please have a look}
\begin{corollary}\label{cor:stop monotone twice}
    Consider $n,a,b\in \Nat$. Then, for all $n\geq a+b $, $\Rs(n;a,b)^* \leq \Rs(n;a+1,b-1)^*$.
\end{corollary}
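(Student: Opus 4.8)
The plan is to obtain the inequality by chaining two applications of the Little John Stop Monotonicity theorem (Theorem~\ref{thm: stop monotonicity}), being careful about the heap-size bounds. The first thing I would note is that the most obvious route fails to give the claimed range of $n$: combining part~(1), $\Rs(n;a,b)^*\le\Rs(n;a+1,b)^*$, with part~(2) applied to the pair $(a+1,b-1)$, $\Rs(n;a+1,b)^*\le\Rs(n;a+1,b-1)^*$, requires $n\ge(a+1)+(b-1)+1=a+b+1$ for the second step, which is one more than the corollary allows. So instead I would route through the intermediate position $(n;a,b-1)^*$.

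Concretely: since $b\in\Nat$ we have $b-1\ge 0$, so $(n;a,b-1)^*$ is a legitimate {\sc Little John} position and the hypotheses of Theorem~\ref{thm: stop monotonicity} (which permit wealths in $\Nat_0$) are met with the pair $(a,b-1)$. Part~(2) of that theorem then gives $\Rs(n;a,b)^*=\Rs\bigl(n;a,(b-1)+1\bigr)^*\le\Rs(n;a,b-1)^*$ for all $n\ge a+(b-1)+1=a+b$, and part~(1) gives $\Rs(n;a,b-1)^*\le\Rs(n;a+1,b-1)^*$ for all $n\ge a+(b-1)+1=a+b$. Chaining these two inequalities yields $\Rs(n;a,b)^*\le\Rs(n;a+1,b-1)^*$ for all $n\ge a+b$, which is exactly the assertion.

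There is no real obstacle here; the only subtlety is the arithmetic of the lower bounds, and the observation that passing through $(n;a,b-1)^*$ rather than $(n;a+1,b)^*$ keeps both sub-bounds at $a+b$ instead of $a+b+1$. Should one wish to avoid the re-routing, the single boundary case $n=a+b$ could instead be verified directly from the Little John Stops Lemma (Lemma~\ref{lem: stop value}) via a short case split on whether $a\le b$ and, when $a>b$, on the position of $a/b$ relative to $\phi$; but the argument above makes that unnecessary.
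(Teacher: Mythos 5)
Your proof is correct, and it uses the same key result as the paper --- a chain of two applications of Theorem~\ref{thm: stop monotonicity} --- but routed through a different intermediate position, and the re-routing is a genuine improvement. The paper's own one-line proof chains $\Rs(n;a,b)^* \leq \Rs(n;a+1,b)^* \leq \Rs(n;a+1,b-1)^*$, where the first inequality is part~(1) for the pair $(a,b)$ and the second is part~(2) for the pair $(a+1,b-1)$; both of these instances of the theorem carry the hypothesis $n\geq a+b+1$, so taken literally the paper's chain only establishes the corollary for $n\geq a+b+1$ and silently skips the boundary case $n=a+b$ that the statement includes. Your detour through $(n;a,b-1)^*$ --- part~(2) for the pair $(a,b-1)$ giving $\Rs(n;a,b)^*\leq\Rs(n;a,b-1)^*$, followed by part~(1) for the same pair giving $\Rs(n;a,b-1)^*\leq\Rs(n;a+1,b-1)^*$ --- keeps both hypotheses at $n\geq a+(b-1)+1=a+b$, so it delivers exactly the stated range; the hypotheses are legitimately met since the theorem allows wealths in $\Nat_0$ and $b-1\geq 0$ when $b\in\Nat$. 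Your closing observation that the lone case $n=a+b$ could instead be checked directly from Lemma~\ref{lem: stop value} is fine but unnecessary given your main argument; the only small inaccuracy in your write-up is attributing the $a+b+1$ obstruction to the second step of the paper's chain alone, when in fact the first step carries the same requirement --- which does not affect your proof.
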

\begin{proof}
     By Theorem~\ref{thm: stop monotonicity}, we have $\Rs(n;a,b)^* \leq \Rs(n;a+1,b)^*\leq \Rs(n;a+1,b-1)^*$.
\end{proof}

We understand the stops of {\sc Little John}. 
Next, we will show that the stops of {\sc Little John} and {\sc Robin Hood} are the same.

\section{Little John guides Robin Hood}\label{sec:stops}
For large heap sizes, {\sc Robin Hood} resembles {\sc Little John}. Robin Hood is wise when he listens to Little John. %\ur{I would be more comfortable if this were called a lemma.}

\begin{theorem}[Wise Robin Hood]\label{thm: stop dependency} 
Consider $n,a,b\in \mathbb{N}_0$. Then, for all $n\geq a+b$, the stops of $(n;a,b)$ are the same as the stops of $(n;a,b)^*$.%, that is $S(n;a,b)=S(n;a,b)^*$.

%Consider $n,a,b\in \mathbb{N}$. Then, for all $n\geq a+b$,  $S(n;a,b)=S(n;a,b)^*$.
\end{theorem}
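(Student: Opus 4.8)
The plan is to prove the statement by induction on $a+b$. By Proposition~\ref{prop: -heap<=stop,RH<=heap}(\ref{prop: -heap<=stop,RH<=heap (3)}) we have $-(n;a,b)=(n;b,a)$, and the same identity $-(n;a,b)^*=(n;b,a)^*$ for {\sc Little John} follows by a routine induction from the definition; so it is enough to establish $\Rs(n;a,b)=\Rs(n;a,b)^*$ for all $n\ge a+b$ (the Left-stop statement then follows by applying the Right-stop statement to $(n;b,a)$). The base cases are those with $\min\{a,b\}=0$: if $a=b=0$ both positions are $0$; if $a>0=b$ both equal the integer $n$, and if $b>0=a$ both equal $-n$ (Theorem~\ref{thm: RH positions}(\ref{item:thm: RH positions:2}) and its {\sc Little John} analogue). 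So assume $a,b\ge 1$; then $n\ge a+b$ forces $n>\max\{a,b\}$, so $(n;a,b)$ is hot and, by Theorem~\ref{thm: RH positions}(\ref{item:thm: RH positions:5}) (applied to $(n;a,b)$ or to $-(n;a,b)$), $\Rs(n;a,b)=\min_{j\in[b]}\Ls\bigl(n-j;(a-j)^+,b\bigr)$, writing $x^+=\max\{x,0\}$.

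Set $\gamma=\min\{a,b\}$. When $a<b$, the options with $\gamma<j\le b$ all equal the integer $-(n-j)$ and are dominated by the option at $j=\gamma=a$ (cf.\ Proposition~\ref{prop: dominated options}), so the minimum may be restricted to $j\in[\gamma]$. For every $j\in[\gamma]$ the option $\bigl(n-j;(a-j)^+,b\bigr)$ either has total wealth $<a+b$ and heap size $\ge$ its total wealth (so the induction hypothesis applies to it), or is an integer game that literally coincides with $\bigl(n-j;(a-j)^+,b\bigr)^*$; either way $\Ls\bigl(n-j;(a-j)^+,b\bigr)=\Ls\bigl(n-j;(a-j)^+,b\bigr)^*$. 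Hence $\Rs(n;a,b)=\min_{j\in[\gamma]}\Ls\bigl(n-j;(a-j)^+,b\bigr)^*$. It remains to show this minimum is attained at $j=\gamma$: granting that, $\Rs(n;a,b)=\Ls\bigl(n-\gamma;(a-\gamma)^+,b\bigr)^*$, which is $\Rs(n;a,b)^*$ by Lemma~\ref{lem: Wise RH is hot}(\ref{item:lem: Wise RH is hot:2}) when $a\ge b$ and is $-(n-a)=\Rs(n;a,b)^*$ by Lemma~\ref{lem: stop value}(\ref{item:lem: stop value:3}) when $a\le b$, closing the induction.

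The crux is therefore the monotonicity claim that $j\mapsto\Ls\bigl(n-j;(a-j)^+,b\bigr)^*$ is non-increasing on $\{0,1,\dots,\gamma\}$, which I would deduce step by step from the inequality
$$\Ls(m-1;c-1,b)^*\le\Ls(m;c,b)^*\qquad\text{whenever }c\ge 1\text{ and }m\ge c+b$$
(applied with $m=n-j$, $c=a-j$, so that $m\ge c+b$ is exactly $n\ge a+b$). Since Left has a unique move in {\sc Little John}, removing $\min\{m,c,b\}=\min\{c,b\}$ tokens, I would prove this inequality by splitting on whether $c-1\ge b$, $c=b$, or $c<b$ (with the degenerate possibility $c=1$): in the first two cases both sides are explicit or reduce to the elementary bound $\Rs(\cdot)^*\le(\text{heap size})$ of Proposition~\ref{prop: -heap<=stop,LJ<=heap}, while the remaining case reduces precisely to $\Rs(m-c;c-1,b-c+1)^*\le\Rs(m-c;c,b-c)^*$, an instance of Corollary~\ref{cor:stop monotone twice} (whose hypothesis $m-c\ge b$ again follows from $m\ge c+b$). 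I expect this last step — identifying which {\sc Little John} sub-position each Left move produces and verifying the heap-size hypotheses needed to invoke Corollary~\ref{cor:stop monotone twice} — to be the only real obstacle; the rest is bookkeeping on the two stop recursions and the induction hypothesis.
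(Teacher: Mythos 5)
Your proposal is correct, and it follows the paper's overall strategy: induction on $a+b$, reduction to Right stops via negation, base cases with a zero wealth, the identity $\Rs(n;a,b)=\min_j\Ls(G^{R_j})$ from Theorem~\ref{thm: RH positions}(\ref{item:thm: RH positions:5}), conversion of the options' stops to {\sc Little John} stops by the induction hypothesis, and then an appeal to {\sc Little John} stop monotonicity to locate the minimizing option. Where you differ is in the organization of that last step: the paper splits into three cases according to the wealth ratio ($a\le b$, $b\le a/2$, $a/2<b<a$), handling the first by a direct lower bound, the second by explicit stop values from Lemma~\ref{lem: stop value}, and the third by splitting the index range and invoking Theorem~\ref{thm: stop monotonicity}; you instead prove a single uniform one-step inequality $\Ls(m-1;c-1,b)^*\le\Ls(m;c,b)^*$ (for $c\ge1$, $m\ge c+b$) and chain it in $j$, so that the minimum over $j\in[\gamma]$ sits at the Little John index $j=\gamma$, after which Lemma~\ref{lem: Wise RH is hot}(\ref{item:lem: Wise RH is hot:2}) (resp.\ Lemma~\ref{lem: stop value}(\ref{item:lem: stop value:3})) closes the induction exactly as in the paper. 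Your one-step lemma is essentially a {\sc Little John} analogue of the paper's later Options Stop Monotonicity (Lemma~\ref{lem: osm (option stop monoto)}), proved without circularity since it rests only on Lemma~\ref{lem: stop value}, Proposition~\ref{prop: -heap<=stop,LJ<=heap} and Corollary~\ref{cor:stop monotone twice}; your case analysis ($c-1\ge b$, $c=b$, $c<b$ via Corollary~\ref{cor:stop monotone twice}, and the degenerate $c=1$, which must indeed be separated since the corollary needs positive wealths) checks out. The payoff of your arrangement is a single monotonicity statement replacing the paper's ratio-based trichotomy, at the cost of one extra intermediate lemma; the boundary case $n=a+b$ relies on Corollary~\ref{cor:stop monotone twice} exactly as stated ($n\ge a+b$), which is the same reliance the paper's own Case~(3) makes, so no new gap is introduced.
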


\begin{proof}
    We prove the statement only for the Right stops, as, for any game $G$, $\Ls(G)=-R(-G)$.%, and hence the proof for the Left stop automatically follows. 
    
    We prove this using induction. Before initiating the induction steps, we first verify the statement for the cases where at least one of \(a\) or \(b\) is zero. Suppose $n\geq 0$.
    \begin{enumerate}
        \item If $a=0=b$, then $(n;a,b)=0=(n;a,b)^*$;
        \item If $a>0=b$, then $(n;a,b)=n=(n;a,b)^*$; 
        \item If $b>0=a$, then $(n;a,b)=n=(n;a,b)^*$.
    \end{enumerate}
    In all these cases, the stops of both games are equal as their game values are equal. Since the choice of $n$ was arbitrary, the statement holds true for all $n\geq 0$ in all these cases. 

    We now proceed to the induction on $a+b$. The base case of induction, \(a + b = 1\), is already proven.%then $(n; a, b) = n = (n; a, b)^*$ for all $n \geq 1$ by domination. Otherwise, if $a = 0$ and $b = 1$, we have $(n; a, b) = -n = (n; a, b)^*$ for all $n \geq 1$ by domination. Consequently, the stops of the two games are equal.
    
    Suppose $n\geq a+b$. To proceed, we consider three cases based on the relative values of \(a\) and \(b\). Note that the scenarios where \(a = 0\), \(b = 0\), or both have already been resolved. Therefore, we now focus solely on cases where \(a, b > 0\). %We now consider 3 cases based on the relative value of $a$ and $b$. Note that the cases where \(a=0\) or \(b = 0\) or both have already been established. Thus, we only consider the cases when \(a,b>0\). %based on $a$ and $b$.
    \begin{enumerate}[wide, labelindent=0pt]
        \item If $a\leq b$, then, $\Rs(n;a,b)^*=-(n-a)$ by Lemma~\ref{lem: stop value}. Next, we compute $\Rs(n;a,b)$ to verify equivalence. By Theorem~\ref{thm: RH positions}(\ref{item:thm: RH positions:5}) and domination,
        \begin{align*}
            \Rs(n;a,b) &= \min_{i\in [a]} ~\Ls(n-i;a-i,b)
        \end{align*}
        We know, by Lemma~\ref{lem: stop value}(\ref{item:lem: stop value:2}) and Proposition~\ref{prop: relation between stop and game}(\ref{item:prop: relation between stop and game:1}), $\Ls(n-a;0,b) =-(n-a)$. Moreover, for all $1\le i\leq a-1$, we have,
        \begin{align*}
            \Ls(n-i;a-i,b) &= \Ls(n-i;a-i,b)^* \tag{by induction}\\
                         &= \Rs(n-a;a-i,b-(a-i))^*\\
                         &\geq -(n-a) \tag{by Prop~\ref{prop: -heap<=stop,LJ<=heap}(\ref{item:prop: -heap<=stop,LJ<=heap:2})}
        \end{align*}
         
        %This implies $\min_{p\in [a-1]} \Ls(n-p;a-p,b) \geq -(n-a) $. 
        Hence, $\min_{i\in [a]} ~\Ls(n-i;a-i,b)=-(n-a)$. This completes the proof of this case.
    
    \item If $b\leq \frac{a}{2}$, then
    \begin{align}
        \Rs(n;a,b) &= \min_{i\in[b]} ~\Ls(n-i;a-i,b) \tag{by Thm~\ref{thm: RH positions}(\ref{item:thm: RH positions:5})}\\
                          &=\min_{i\in[b]} ~\Ls(n-i;a-i,b)^* \tag{by induction}\\
                          &=\min_{i\in[b]}~ -\Rs(n-i;b,a-i)^* \tag{by Prop~\ref{prop: relation between stop and game}(\ref{item:prop: relation between stop and game:1})}\\
                          &=\min_{i\in[b]}~ (n-i-b) \label{eq:thm: stop dependency:1}\\
                          &= n-2b.\nonumber
    \end{align}
    Equation~\eqref{eq:thm: stop dependency:1} follows using Lemma~\ref{lem: stop value}(\ref{item:lem: stop value:3}) as $a-i\geq b$ and $n\geq a+b$.
    
    Now, we compute $\Rs(n;a,b)^*$. 
    \begin{align}
        \Rs(n;a,b)^* &= \Ls(n-b;a-b,b)^* \tag{by Lem~\ref{lem: Wise RH is hot}(\ref{item:lem: Wise RH is hot:2})}\\
                    &= -\Rs(n-b;b,a-b)^* \tag{by Prop~\ref{prop: relation between stop and game}(\ref{item:prop: relation between stop and game:1})}\\
                    &= n-2b. \label{eq:thm: stop dependency:2}                  
    \end{align}
    Equation~\eqref{eq:thm: stop dependency:2} holds using Lemma~\ref{lem: stop value}(\ref{item:lem: stop value:3}) as $a-b\geq b$ and $n\geq a+b$. This concludes the proof for this case.

    \item If $\frac{a}{2} < b < a$, then, 
    \begin{align*}
        \Rs(n;a,b) &= \min_{i\in[b]}~ \Ls(n-i;a-i,b)\\
                      &=\min_{i\in[b]} ~-\Rs(n-i;b,a-i)^* \tag{by induction and Prop~\ref{prop: relation between stop and game}(\ref{item:prop: relation between stop and game:1})}
    \end{align*}
    Now, we divide the range of $i$ in two parts, in one, $a-i\geq b$ and in the other, $a-i<b$. So, we define $A\coloneqq \{i\in[b]: a-i\geq b\}$ and $B\coloneqq [b]\setminus A$. Note that $B$ cannot be empty by the assumption. Thus, $A=\{1,\ldots , a-b\}$ and $B=\{a-b+1,\ldots , b\}$. Then, 
    \begin{align*}
        \Rs(n;a,b) &= \min\left\{ \min_{i\in A}~ \left(-\Rs(n-i;b,a-i)^*\right), ~~\min_{i \in B}~\left(-\Rs(n-i;b,a-i)^*\right)\right\}\\
                 &= \min\left\{ \min_{i\in A}~ (n-i-b),~~ \min_{i \in B}~\left(-\Ls\left(n-a;b-(a-i),a-i\right)^*\right)\right\}\\ 
                 &= \min\left\{ n-(a-b)-b, ~~\min_{i \in B}~ \Rs\left(n-a;a-i,b-(a-i)\right)^*\right\} \tag{by definition of $A$}\\ 
                 &= \min\left\{ n-a, ~~\Rs\left(n-a;a-b,b-(a-b)\right)^*\right\} \tag{by Thm~\ref{thm: stop monotonicity}}\\
                 &= \Rs\left(n-a;a-b,b-(a-b)\right)^* \tag{by Prop~\ref{prop: -heap<=stop,LJ<=heap}(\ref{item:prop: -heap<=stop,LJ<=heap:2})}\\
                 &=\Ls(n-b;a-b,b)^* \\ %\tag{by the assumption}\\
                 &=\Rs(n;a,b)^*.
    \end{align*}
    The last two equalities holds using Lemma~\ref{lem: Wise RH is hot}(\ref{item:lem: Wise RH is hot:2}) and the condition $a/2<b<a$.  
    \end{enumerate}
    Since, the choice of $n$ was arbitrary from the set $\Set{a+b, a+b+1, \dots}$, all these cases holds for all $n\geq a+b$. This completes the proof.
\end{proof}

The next results focus on the geometric aspects of $\T(n;a,b)^*$. As we indicated in the Introduction, the typical behavior will depend of the `wealth ratio' $a/b$. By convention we choose $a\ge b$. Therefore the Left option will be (trivial) a Number, and all efforts will concern the Right options. Let the {\em wealth ratio} of a Right option be $w_b:=(a-b)/b$. %In the next Section we restate the main theorem and interpret these results in terms of mean values and temperatures.

\begin{theorem}[Little John  Thermographs]\label{thm: LJthermograph}
    For fixed integers $a, b > 0$, let $G = (n; a, b)^*$. Then, for $n$ sufficiently large, 
    \begin{enumerate}
        \item $G\in \LST$, if $\frac{a}{b} > \phi$;
        \item $G\in \RST$, if $\frac{a}{b} < \phi^{-1}$;
        \item $G\in \DT$, otherwise.
    \end{enumerate}
\end{theorem}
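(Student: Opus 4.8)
The plan is to prove this by strong induction on $a+b$, simultaneously with a slightly stronger claim that also pins down the Left and Right stops (which we already have from Lemma~\ref{lem: stop value}) and, crucially, that identifies \emph{which} Right option controls $\RW(G)$. Since $a\ge b$, the Left option $(n-b;a,0)^* = n-b$ is a Number, hence a Mast, so by Lemma~\ref{lem: MvsO mast vs options} the large \emph{left} wall $\LW(G)$ is governed solely by this option and has slope $-1$ down to the point $(\Ls(G), 0)$; below some height it becomes the vertical mast. Thus the left side of every Little John thermograph automatically has the ``slope $-1$ then vertical'' shape, and all the work is on the right side, i.e.\ on the single Right option $H := (n-\gamma; a-\gamma, b)^*$ with $\gamma = \min\{n,a,b\} = b$ (for $n$ large), namely $H = (n-b; a-b, b)^*$.

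First I would handle the base and the degenerate branches. When $b\ge a$ — in particular when $a = b$ — the Right option $H = (n-b;a-b,b)^*$ has first wealth $\le$ second wealth, so by Lemma~\ref{lem: stop value}(\ref{item:lem: stop value:3}) both its stops equal $-(n-2b)$: it is (eventually) a Number, hence a Mast, and then by Corollary~\ref{cor: MvsO mast vs options} the right wall $\RW(G)$ has slope $+1$ from $(\Rs(G),0)=(-(n-a),0)$ up to the mast. Combined with the left side this gives $G\in\DT$, matching case (3) since $a/b = 1 \in (\phi^{-1},\phi)$. The wealth ratio $w_b = (a-b)/b$ of the Right option is exactly the quantity that decides which recursive regime $H$ falls into, and the arithmetic of Proposition~\ref{lem: position of b} / Lemma~\ref{prop: PS-MPS inequality} tells us how $w_b$ compares to $\phi$ and $\phi^{-1}$ as we iterate.

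The inductive heart is the case $b < a$, where $H = (n-b;a-b,b)^*$ is again hot (Lemma~\ref{lem: Wise RH is hot}), so we may apply the induction hypothesis to it — but with the roles of the two wealths possibly swapped, since $H$'s wealths are $a-b$ and $b$ in that order. I would split on the three sub-regimes for $a/b$:
(i) if $a/b > \phi$, then one checks $0 < (a-b)/b < 1$, and in fact $(a-b)/b$ lands in $(\phi^{-1},1)$ or below; here the inductive/stop computation (via Lemma~\ref{lem: stop value}(\ref{item:lem: stop value:5})) forces $\Ls(H)=\Rs(H)$ eventually — $H$ is cold/Number — so $\RW(G)$ has slope $+1$ while $\LW(G)$ has slope $-1$; but because $H$ becomes a Number, its thermograph is a mast and the \emph{combined} picture is a left single tent (the right wall is vertical beyond a bounded height, the left wall has slope $-1$), giving $G\in\LST$. (ii) Symmetrically, $a/b < \phi^{-1}$ can't happen under $a\ge b$, so that branch is vacuous here and is obtained from case (1) by the reflection $-G = (n;b,a)^*$ and Proposition~\ref{prop: relation between stop and game}(\ref{item:prop: relation between stop and game:1}); this is the cleanest way to get case (2). (iii) if $\phi^{-1} < a/b < \phi$ with $a>b$, then $(a-b)/b$ satisfies the complementary inequalities so that $H$ is itself a tent by induction, its right wall has slope $+1$ passing through $(\Rs(H),0)$, and since $\Rs(H) = -\Rs(G) + (\text{shift})$ is strictly larger than $\Ls(H)$-side behavior, the right wall of $G$ inherits slope $+1$; together with the slope $-1$ left wall this yields $G\in\DT$.

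The main obstacle I anticipate is the bookkeeping in step (iii): one must verify that when we pass from $G$ to its Right option $H$, the new wealth ratio $w_b = (a-b)/b$ stays inside the ``tent'' regime, and then that the Right wall of $H$ — which by induction is $\{(\Rs(H_p),p)\}$ with slope $+1$ — after the $-p$ penalty shift and the comparison $\Rs(H_p)-p$ against the Left-option-controlled value, actually \emph{wins} the $\min$/$\max$ defining $\Rs(G_p)$ for \emph{all} $p \le t(G)$, not just at $p=0$. This is where I would lean on Proposition~\ref{prop: L(G_t)<L(G)} (penalties shrink both stops monotonically) together with Theorem~\ref{thm: stop monotonicity} and Corollary~\ref{cor:stop monotone twice} to control the relevant stop inequalities uniformly in $p$, exactly as in the proof of Lemma~\ref{lem: MvsO mast vs options}. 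A secondary subtlety is making ``for $n$ sufficiently large'' uniform through the induction: each recursive step decreases $n$ by $b\le \max\{a,b\}$ but also decreases $a+b$, so the threshold on $n$ can be taken to be $a+b$ (as in Theorem~\ref{thm: stop dependency}), and I would state the induction with that explicit bound so the ``sufficiently large'' is never re-chosen mid-proof.
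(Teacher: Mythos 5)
There is a genuine gap, and it sits exactly in the case that carries all the difficulty: $\frac{a}{b}>\phi$. You claim that here the Right option $H=(n-b;a-b,b)^*$ has wealth ratio $\frac{a-b}{b}$ lying in $(\phi^{-1},1)$ ``or below'' and that Lemma~\ref{lem: stop value} forces $\Ls(H)=\Rs(H)$, so that $H$ is a Number/mast and the vertical right wall of $G$ comes for free via Corollary~\ref{cor: MvsO mast vs options}. Both claims are false. First, $\frac{a-b}{b}=\frac{a}{b}-1$ can be arbitrarily large (e.g.\ $a/b=3$ gives ratio $2$), and it is always $>\phi^{-1}$, never $<\phi^{-1}$. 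Second, and decisively, $H$ has both wealths positive (since $a>b$), so by Lemma~\ref{lem: Wise RH is hot} it is \emph{hot} for large $n$; its stops are not equal and its thermograph is a tent ($\DT$ or $\LST$ by the induction hypothesis), not a mast. Consequently the right wall of $G$ is not automatically vertical: one must show that the slope $-1$ left wall of $G$ (coming from the Number Left option $n-b$) actually meets the slope $-1$ small left wall of $\T(H)$ at a height not exceeding $t(H)$, i.e.\ $\Ls(G)-\Ls(H)\le t(H)$. That is the real content of this case, and your argument has nothing in its place. In the paper this splits into two sub-cases: when $\frac{a-b}{b}>\phi$ (so $H\in\LST$) the inequality becomes $b\le a-b-U_0$, i.e.\ $U_0\le a-2b$, which is proved with the MP-sequence facts (Observation~\ref{obs: increasing MPS}, including ruling out the index-$2$ case by contradiction with the ratio assumption); when $\frac{a-b}{b}<\phi$ (so $H\in\DT$) it holds for large $n$ because $\Ls(H)-\Rs(H)$ grows linearly in $n$ while $\Ls(G)-\Ls(H)$ is constant. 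Without some version of this verification the conclusion $G\in\LST$ is unsupported.

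The rest of your outline is broadly aligned with the paper's proof (induction on $a+b$, the Left option $n-b$ being a mast so $\LW(G)$ has slope $-1$, case (2) obtained by reflection, and the $\DT$ case $1\le a/b<\phi$ handled because the Right option is then in $\RST$ by induction, contributing a slope $+1$ right wall), and your point about keeping ``sufficiently large $n$'' coherent through the induction is reasonable, though the threshold cannot simply be $n\ge a+b$ in the sub-case where one waits for $\Ls(H)-\Rs(H)=2n+c$ to dominate a constant. Also a small slip: for $a=b$ the Right option is $(n-b;0,b)^*=-(n-b)$, so both of its stops are $-(n-b)$, not $-(n-2b)$ (Lemma~\ref{lem: stop value}(3) does not apply since the first wealth is $0$); the structural conclusion $G\in\DT$ is nonetheless correct. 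But as written, the proposal proves the theorem only in the regimes where the Right option degenerates or is a right single tent, and misses the Pingala-type estimate that makes the $\frac{a}{b}>\phi$ case work.
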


\begin{proof}
    We induct on $a+b$. Without loss of generality, consider $a\geq b>0$, as the thermograph of $(n;a,b)^*$ is the mirror image of $(n;b,a)^*$. 

    \emph{Base case 1:} If $a+b=2$, then $a/b=1$ and $G=(n;1,1)^*=\{n-1\mid 1-n\}$. Hence, $G\in \DT$ for all $n\geq 2$, as in Figure~\ref{fig: thm: LJthermograph:Base case 1}. 

    \begin{figure}[ht]
        \centering
        \begin{subfigure}[b]{0.49\textwidth}
            \centering
            \begin{tikzpicture}[>=stealth, scale=1,dot/.style = {circle, fill=red, minimum size=#1, inner sep=0pt, outer sep = 0pt}, dot/.default = 6pt]
                \draw [<->] (-2,0) -- (2,0); %node [at end, right] {$p=0$};
                \node[dot,scale=0.3,label=below:{\midsize $n-1$}] (a) at (-1,0) { };
                \node[dot,scale=0.3,label=below:{\midsize $1-n$}] (b) at (1,0) { };
                \node[dot, scale=0.3] (c) at (0,1) { };
                % \node [below] at (b)   {\midsize{$1-n$}};
                \node [below] at (0,0)   {\midsize{$0$}};
                \draw[black] (0,0.04)--(0,-0.04);
                % \node [below] at (a)   {\midsize{$n-1$}};
                \draw [red]  (-1,0) -- (0,1) -- (1,0);
                \draw [red]  (0,1) -- (0,1.5);
                %\node[right] at (0,1) {\tiny (n-1)};
            \end{tikzpicture}
        \caption{The thermograph of $(n;1,1)^*$.}
        \label{fig: thm: LJthermograph:Base case 1}
        \end{subfigure}%
        \hspace{-1em}%
        \begin{subfigure}[b]{0.49\textwidth}
            \centering
            \begin{tikzpicture}[>=stealth, scale=1,,dot/.style = {circle, fill=red, minimum size=#1, inner sep=0pt, outer sep = 0pt}, dot/.default = 6pt]
             \draw [<->] (-2,0) -- (2,0); %node [at end, right] {$p=0$};
                   \coordinate (a) at (-1,0);
                  \coordinate (b) at (0,0);
                   \node [dot,scale=0.3,label=below:{\midsize $n-2$}] (b) at (0,0)   { };
                   \node [dot,scale=0.3,label=below:{\midsize $n-1$}] (a) at (-1,0)   { };
                   \node[dot,scale=0.3,label=right:{\midsize $1$}] (c) at (0,1) { };
                   \node [below] at (0.9,0) {\midsize{$0$}};
                  \draw [red]  (-1,0) -- (0,1) -- (0,0);
                  \draw [red]  (0,0) -- (0,1.5);
                  \draw[black] (0.9,0.04)--(0.9,-0.04);
            % \node[right] at (0,1) {\midsize{$1$}};
        \end{tikzpicture}
        \caption{The thermograph of $(n;2,1)^*$.}
        \label{fig: thm: LJthermograph:base case 2}
        \end{subfigure}
        \caption{Thermographs of $(n;a,b)^*$ for small $a$ and $b$.}
    \end{figure}

    \emph{Base case 2:} If $a+b=3$, then $a/b=2$ and  $G=(n;2,1)^*=\{n-1\mid\{n-2\mid 2-n\}\}$. Thus, for all $n\geq 3$, $G \in \LST$, as in Figure~\ref{fig: thm: LJthermograph:base case 2}.    
 
     Since $a\geq b$, for all $n\geq b$, the Left option of $G=(n;a,b)^*$ is the Number $n-b$, and thus its thermograph is a mast at $n-b$. Therefore, the contribution of $\T(G^L)$ to $\T(G)$ is a left wall of slope $-1$. 
     
     Next we analyze the contributions of the Right option for all sufficiently large $n$. This will require an induction argument, and we have analyzed the base cases above. Suppose that, for sufficiently large $n$, the statement holds for the thermographs of the options of $G=(n;a,b)^*$. To prove that the statement holds for the thermograph of $G$, we take different cases based on the ratio of the players' wealths.

    \begin{enumerate}[wide, labelindent=0pt]
    \item[\bf{(1)}] \textbf{\bm{$a/b>\phi$}:}  In this case $G^R=(n-b;a-b,b)^*.$ We must prove that, for sufficiently large $n$,  $G\in \LST$. The thermograph of $G^R$ depends on the wealth ratio $w_b=(a-b)/b$. Since $a/b>\phi$, then $(a-b)/b>\phi^{-1}$. Hence, by induction, $G^R\in \DT \cup \LST$. Thus, $\lw(G^R)$ has slope $-1$. In either case, since we must prove that $\RW(G)$ is a vertical line, we must verify that the contribution to $\T(G)$ from $\T(G^L)$ meets the contribution from small left wall of $\T(G^R)$.% of $\T(G^R)$. 

        \begin{enumerate}[wide, parsep=4pt, topsep=6pt, labelindent=1pt]
            \item[\bf{(1A)}] \bm{$w_b>\phi$}\textbf{:} In this case, $G^R\in \LST$ (see Figure~\ref{fig: thermograph of G in case 1A}). The small left wall of $\T(G^R)$ ends at the temperature  $t(G^R)=\Ls(G^R)-\Rs(G^R)$. Recall that the contribution from $\T(G^L)$ to $\T(G)$ is a small left wall with slope $-1$. It intersects the left rotated$\lw(G^R)$ %[does not meet the contribution from $\T(G^R)$ above its temperature] 
            if and only if 
               \begin{align}
            \Ls(G)-\Rs(G)&=\notag\\
                \Ls(G)-\Ls(G^R)&\leq \Ls(G^R)-\Rs(G^R),\label{eq: LJ Therm eq 1}
            \end{align}

          %  \begin{equation}\label{eq: LJ Therm eq 1}
           %     \Ls(G)-\Ls(G^R)\leq \Ls(G^R)-\Rs(G^R).
            %\end{equation}

            \begin{figure}[ht!]
                \centering
                \begin{subfigure}[b]{0.45\textwidth}
                    \centering
                    \begin{tikzpicture}[>=stealth, scale=.7,dot/.style = {circle, fill=red, minimum size=#1, inner sep=0pt, outer sep = 0pt}, dot/.default = 6pt]
                        \draw [<->] (-1.5,0) -- (4.6,0); %node [at end, right] {$p=0$};
                        \node[anchor=north] (l) at (-0.5,0) {\midsize $\Ls(G)\quad$};
                        \node[dot,purple,scale=0.3] at (l.north) {};
                        \node[anchor=north] (rl) at (1,0) {\midsize $\;\Ls(G^R)$};
                        \node[dot,purple,scale=0.3] at (rl.north) {};
                        \node[anchor=north] (rr) at (3,0) {\midsize $\Rs(G^R)$};
                        \node[anchor=north] (extra) at (4,0) {\midsize $0$};%
                        \draw[dashed, red!50] (l.north)-- +(0,3.3);
                        \draw[dashed,blue!50] (rl.north)-- +(2,2) node (e1) { };
                        \draw[dashed, blue!50] (rr.north)--+(0,3.3);
                        \draw[dotted,black] (e1.center)--+(-3,0) (e1.center)--+(0.7,0);
                        \draw[purple, thick] (l.north)-- +(1.5,1.5) (rl.north)--+(0,3.3);
                        \node[dot,purple,scale=0.3] at (1,1.5) {};
                        \draw[black] (4,0.05)--(extra.north)--(4,-0.05);       
                    \end{tikzpicture}
                    \caption{The wealth ratio of $G^R$ is \(\frac{a-b}{b} > \phi\).}
                    \label{fig: thermograph of G in case 1A}
                \end{subfigure}%
                \hspace{-0.5em}
                \begin{subfigure}[b]{0.45\textwidth}
                    \centering
                    \begin{tikzpicture}[>=stealth, scale=.7,dot/.style = {circle, fill=red, minimum size=#1, inner sep=0pt, outer sep = 0pt}, dot/.default = 6pt]
                        \draw [<->] (-.4,0) -- (6,0); %node [at end, right] {$p=0$};
                        \node[anchor=north] (l) at (0.8,0) {\midsize $\Ls(G)\quad$ };
                        \node[dot,purple,scale=0.3] at (l.north) {};
                        \node[anchor=north] (rl) at (2,0) {\midsize $\;\Ls(G^R)$};
                        \node[dot,purple,scale=0.3] at (rl.north) {};
                        \node[anchor=north] (rr) at (5,0) {\midsize $\Rs(G^R)$};
                        \node[anchor=north] (extra) at (3,0) {\midsize $0$};%
                        % \node (1) at l1.north;
                        \draw[dashed, red!50] (l.north)-- +(0,3.3);
                        % \draw [red, preaction={draw, yellow, double=yellow, double distance=1mm}]  (l1.east) -- +(6,6);
                        \draw[dashed,blue!50] (rl.north)-- +(1.5,1.5);
                        \draw[dashed, blue!50] (rr.north)--+(-1.5,1.5) node (e1) { };
                        \draw[dashed, blue!50] (e1.center)-- +(0,1.8);
                        \draw[dotted,black] (e1.center)--+(-2,0) (e1.center)--+(1,0);
                        \draw[purple, thick] (l.north)-- +(1.2,1.2) (rl.north)--+(0,3.3) node (e2) { };
                        \node[dot,purple,scale=0.3] at (2,1.2) {};
                        % \draw[purple, thick] (e2.center)--+(0,3);
                        \draw[black] (3,0.05)--(extra.north)--(3,-0.05);       
                    \end{tikzpicture}  
                    \caption{The wealth ratio of $G^R$ is \(\phi^{-1}<\frac{a-b}{b} < \phi\).}
                    \label{fig: thermograph of G in case 1B}
                \end{subfigure}
                \caption{Thermographs of $G=(n;a,b)^*$ and its options when $\frac{a}{b}>\phi$. %, and cases that depend on the wealth ratio of the Right option.
                In each figure, the Red dashed line represents \(\T(G^L)\), blue dashed lines represent \(\T(G^R)\), black dotted line indicates the temperature of \(G^R\) and \(\T(G)\) is given by solid purple lines.}
                \label{fig: Thermograph of G case 1}
            \end{figure}
            where the first equality holds since there is only one option. For the second inequality, we need to find $\Rs(G^R)$ and $\Ls(G^R)$. For this purpose, let $(U_i)_{i\geq 0}$ be an MP-sequence with $U_{\mu+1}=a-b$ and $U_\mu=b$ for some $\mu\geq 0$. Then, by Lemma~\ref{lem: stop value}, we have $\Ls(G)=n-b$, $\Ls(G^R)=n-2b$, and $\Rs(G^R)= n-b-(a-b+b)+U_0$ which implies $$\Ls(G)-\Ls(G^R)=b\; \text{ and }\;\Ls(G^R)-\Rs(G^R)=a-b-U_0.$$ Hence, by \eqref{eq: LJ Therm eq 1}, it suffices to prove that $U_0\le a-2b$. Note that $a-2b>0$ and therefore $U_{\mu-1}=a-2b$ and $\mu\geq 1$. Thus, the problem reduced to show that $U_0\leq U_{\mu-1} \text{, for all } \mu\geq 1.$

            If $\mu\geq 3$ or $\mu=1$, then, by Observation~\ref{obs: increasing MPS}(\ref{item:obs: increasing MPS:1}), $U_0\leq U_{\mu-1}$. If $\mu=2$, then $U_0=3b-a$ and $U_{\mu-1}=U_1=a-2b$. Now, by applying Observation~\ref{obs: increasing MPS}(\ref{item:obs: increasing MPS:1}), we get $3b-a\geq a-2b$. This implies $w_b\leq 1.5$, which is a contradiction to our assumption.

            \vspace{7pt}
            \item[\bf{(1B)}] \bm{$w_b<\phi$}\textbf{:} In this case, $G^R\in \DT$ (see Figure~\ref{fig: thermograph of G in case 1B}). Thus, $\lw(G^R)$ ends at the temperature $t(G^R)=\frac{1}{2} \left(\Ls(G^R)-\Rs(G^R)\right)$. Recall that the contribution from $\T(G^L)$ to $\T(G)$ is a small left wall of slope $-1$, and it meets the left rotated $\lw(G^R)$ if and only if 
            \begin{equation}\label{eq: LJ Therm eq 2}
                \Ls(G)-\Ls(G^R)\leq \frac{1}{2}\left(\Ls(G^R)-\Rs(G^R)\right).
            \end{equation} 
            By Lemma~\ref{lem: stop value}, $\Ls(G^R)-\Rs(G^R)=2n+c$ and $\Ls(G)-\Ls(G^R)=d$ for some constants $c$ and $d$, with respect to $n$. Hence, for all sufficiently large $n$, $\Ls(G)-\Ls(G^R)\leq \frac{1}{2}\left(\Ls(G^R)-\Rs(G^R)\right)$.

            %  \item[\ur{Only fixed some typos here.}(1B)] $(a-b)/b<\phi$: (i.e., $\phi<a/b<1+\phi$ \ur{insert this when we need it.}). In this case, $\T(G^R)\in \mathrm{DT}$. Therefore, the left rotation of the Left slant of $\T(G^R)$ is a vertical line till the height $\frac{1}{2}\left(\Ls(G^R)-\Rs(G^R)\right)$. As shown in Figure~\ref{fig: thermograph of G in case 1B}, the contribution from $G^L$ is a straight line with slope -1 and it does not meet the contribution from $G^R$ above the height $\frac{1}{2}\left(\Ls(G^R)-\Rs(G^R)\right)$ if and only if \begin{equation}
            %     \Ls(G)-\Ls(G^R)\leq \frac{1}{2}\left(\Ls(G^R)-\Rs(G^R)\right).
            % \end{equation} By Lemma~\ref{lem: stop value}, $\Ls(G^R)-\Rs(G^R)=2n+c$ and $\Ls(G)-\Ls(G^R)=d$ for some constants $c, \;d$ with respect to $n$. Hence, there exists $n_0 \in \mathbb{N}$, such that, for all $n\geq n_0$,  $\Ls(G)-\Ls(G^R)\leq \frac{1}{2}(\Ls(G^R)-\Rs(G^R))$.

        \end{enumerate}

        \item[\bf{(2)}] \bm{$1< a/b< \phi$}\textbf{:} In this case, we must prove that, for all sufficiently large $n$, $G\in \DT$. Here $G=\{n-b\mid(n-b;a-b,b)^*\}$, and thus, by $\frac{a-b}{b}<\phi^{-1}$, by induction, for $n$ sufficiently large, $G^R\in \RST$ (see Figure~\ref{fig: Thermograph of G case 2}). Hence, the contribution to $\T(G)$ from $\T(G^R)$ is a small right wall with slope $+1$ and the contribution from $\T(G^L)$ is a small left wall with slope $-1$. Hence, for all sufficiently large $n$, $G\in \DT$ (as in Figure~\ref{fig: Thermograph of G case 2}). In this case, the lower bound for $n$ is $a+b$, by using Lemma~\ref{lem: stop value}.  %\ur{Therefore the temperature is ... and the mean value is ....}

            \begin{figure}[ht]
                \centering
                \begin{subfigure}[b]{0.48\textwidth}
                \centering
                    \begin{tikzpicture}[scale=0.7,>=stealth,dot/.style = {circle, fill=red, minimum size=#1, inner sep=0pt, outer sep = 0pt}, dot/.default = 6pt]
                    \draw [<->] (-1,0) -- (6,0); %node [at end, right] {$p=0$};
                        \node[anchor=north] (l) at (0,0) {\midsize $\Ls(G)$};
                        \node[dot,purple,scale=0.3] at (l.north) {};
                        \node[anchor=north] (rl) at (3,0) {\midsize $\Ls(G^R)$};
                        \node[dot,purple,scale=0.3] at (rl.north) {};
                        \node[anchor=north] (rr) at (5,0) {\midsize $\Rs(G^R)$};
                        \node[anchor=north] (extra) at (1.8,0) {\midsize 0}; %$0$
                        \draw[dashed, red!50] (l.north)-- +(0,3.3);
                        \draw[dashed,blue!50] (rl.north)-- +(0,3.3);
                        \draw[dashed, blue!50] (rr.north)--+(-2,2) node (e2) { };
                        \draw[purple, thick] (l.north)-- +(1.5,1.5) (rl.north)--+(-1.5,1.5) node (e1) { };
                        \node[dot,purple,scale=0.3] at (e1) {};
                        \draw[purple, thick] (e1.center)--+(0,1.8);
                        \draw[black] (extra.north)--+(0,0.05)--+(0,-0.1);
                        % \draw[dotted,black] (e2.center)--+(-1.5,0);
                \end{tikzpicture}
                \caption{The wealth ratio is $1<\frac{a}{b}<\phi$.}
                \label{fig: Thermograph of G case 2}
                \end{subfigure}%
                \hfill
                \begin{subfigure}[b]{0.48\textwidth}
                \centering
                    \begin{tikzpicture}[scale=0.7,>=stealth,dot/.style = {circle, fill=red, minimum size=#1, inner sep=0pt, outer sep = 0pt}, dot/.default = 6pt]
                    \draw [<->] (-1,0) -- (4.5,0); %node [at end, right] {$p=0$};
                        \node[anchor=north] (l) at (0,0) {\midsize $\Ls(G)$};
                        \node[dot,purple,scale=0.3] at (l.north) {};
                        \node[anchor=north] (rl) at (3,0) {\midsize $\Ls(G^R)$};
                        \node[dot,purple,scale=0.3] at (rl.north) {};
                        % \node[anchor=north] (rr) at (5,0) {\tiny $\Rs(G^R)$};
                        \node[anchor=north] (extra) at (1.8,0) {\midsize $0$};
                        \draw[dashed, red!50] (l.north)-- +(0,3.3);
                        \draw[dashed,blue!50] (rl.north)-- +(0,3.3);
                        % \draw[dashed, blue!50] (rr.north)--+(-2,2) node (e2) { };
                        \draw[purple, thick] (l.north)-- +(1.5,1.5) (rl.north)--+(-1.5,1.5) node (e1) { };
                        \node[dot,purple,scale=0.3] at (e1) {};
                        \draw[purple, thick] (e1.center)--+(0,1.5);
                        \draw[black] (extra.north)--+(0,0.05)--+(0,-0.1);
                        % \draw[dotted,black] (e2.center)--+(-1.5,0);
                \end{tikzpicture}
                \caption{The wealth ratio is $\frac{a}{b}=1$.}
                \label{fig: Thermograph of G case 3}
                \end{subfigure}
                \caption{Thermographs of $G$ when $\frac{a}{b}<\phi$. In each figure, the Red dashed line represents \(\T(G^L)\), blue dashed lines represent \(\T(G^R)\), and \(\T(G)\) is given by purple solid lines.}
            \end{figure}

        \item[\bf{(3)}] \bm{$a/b=1$}\textbf{:} If $n>a=b$, obviously $G\in\DT$.
    \end{enumerate}
    Thus {\sc Little John}'s tent structures have been established.
\end{proof}
The term ``orthodox option'' is often used in the context of thermograph plots. Such options contribute to the thermograph. 
The key to the {\sc Robin Hood} thermographs depends on its orthodox link with {\sc Little John}. Later in  Theorem~\ref{thm:RHthermographs} we will exploit further the stop monotonicity of {\sc Robin Hood} options. 

%\todo{write an observation about thermograph which talks about the orthodox options} %\ur{Are you saying that the various ordo behavior is not required?}
%Next, we prove the monotonicity property for the Left and Right stops of the Left and Right options of Robin Hood. 
\begin{lemma}[Options Stop Monotonicity]\label{lem: osm (option stop monoto)}
    Let $n,a,b>0$ be integers and consider the {\sc Robin Hood} game $(n;a,b)$, with Left and Right options $L_i=(n-i;a,b-i)$ and $R_i=(n-i;a-i,b)$, respectively, where $i\in[b]$. For all sufficiently large heap sizes $n$,
    \begin{enumerate}
        \item $\Ls(L_1)= \Ls(L_2)=\dots=  \Ls(L_b)=n-b$;
        \item $\Rs(L_1)\leq  \Rs(L_2)\leq \dots  \leq \Rs(L_b)=n-b$;
        \item $\Ls(R_1)\geq \Ls(R_2)\geq \dots\geq \Ls(R_b)$;
        \item $\Rs(R_1)\geq \Rs(R_2)\geq \dots\geq \Rs(R_b)$.
    \end{enumerate}
\end{lemma}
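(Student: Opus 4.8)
The plan is to push everything over to {\sc Little John} via the Wise Robin Hood theorem (Theorem~\ref{thm: stop dependency}), and then to read off the stops of the relevant {\sc Little John} positions using the ``Little John path'' identities of Lemma~\ref{lem: Wise RH is hot}(\ref{item:lem: Wise RH is hot:2}) together with the {\sc Little John} stop monotonicity of Theorem~\ref{thm: stop monotonicity}. Throughout, $a\ge b$ is forced (otherwise $L_i=(n-i;a,b-i)$ with $i\in[b]$ is not a legal Left option), and ``sufficiently large'' means $n\ge a+b+O(1)$, large enough for every cited result. Items (1) and (2) are short; item (3) carries the weight, and item (4) is then a formal consequence of (3).

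\emph{Items (1) and (2).} For $i\in[b]$, Theorem~\ref{thm: stop dependency} replaces $\Ls(L_i),\Rs(L_i)$ by $\Ls(n-i;a,b-i)^*,\Rs(n-i;a,b-i)^*$. For $i<b$ we have $a\ge b-i>0$, so Lemma~\ref{lem: Wise RH is hot}(\ref{item:lem: Wise RH is hot:2}) gives
\[
\Ls(L_i)=\Rs(n-b;a,0)^*=n-b,\qquad
\Rs(L_i)=\Ls\bigl(n-b;\,a-b+i,\,b-i\bigr)^*,
\]
and both formulas persist at $i=b$, since $L_b=(n-b;a,0)=n-b$ is a Number. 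The first is item (1). For item (2), note that the games $(n-b;\,a-b+i,\,b-i)^*$ all share the heap size $n-b$, and passing from $i$ to $i+1$ raises the first wealth by one and lowers the second by one; hence $\Rs(L_i)\le\Rs(L_{i+1})$ follows by applying parts (3) and (4) of Theorem~\ref{thm: stop monotonicity} in succession (the $\Ls$-analogue of Corollary~\ref{cor:stop monotone twice}), and $\Rs(L_b)=\Ls(n-b;a,0)^*=n-b$.

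\emph{Item (3).} Again by Theorem~\ref{thm: stop dependency}, $\Ls(R_i)=\Ls(n-i;a-i,b)^*$; we split into three regimes according to $a-i$, and the argument works for all $i\in\{1,\dots,a\}$ (which is what item (4) will need).
\begin{itemize}
    \item \emph{If $a-i\ge b$:} Lemma~\ref{lem: Wise RH is hot}(\ref{item:lem: Wise RH is hot:2}) gives $\Ls(R_i)=\Rs(n-i-b;\,a-i,\,0)^*=n-i-b$, strictly decreasing in $i$.
    \item \emph{If $0<a-i<b$:} since $(n-i;a-i,b)^*=-(n-i;b,a-i)^*$, Proposition~\ref{prop: relation between stop and game}(\ref{item:prop: relation between stop and game:1}) and Lemma~\ref{lem: Wise RH is hot}(\ref{item:lem: Wise RH is hot:2}) yield
    \[
    \Ls(R_i)=-\Rs(n-i;b,a-i)^*=-\Ls\bigl(n-a;\,b-a+i,\,a-i\bigr)^*,
    \]
    whose games share the heap size $n-a$, with first wealth rising and second wealth falling in $i$; so $\Ls(R_i)\ge\Ls(R_{i+1})$ inside this regime, again by parts (3) and (4) of Theorem~\ref{thm: stop monotonicity}.
    \item \emph{If $a-i=0$ (i.e.\ $i=a$):} $R_a=(n-a;0,b)=-(n-a)$, so $\Ls(R_a)=-(n-a)$.
\end{itemize}
Finally, at each regime boundary (when both neighbouring regimes are non-empty) one checks there is no upward jump: $\Ls(R_{a-b})=n-a$ while $\Ls(R_{a-b+1})=-\Ls(n-a;1,b-1)^*\le n-a$, and $\Ls(R_{a-1})\ge-(n-a)=\Ls(R_a)$; both hold since $-(n-a)\le\Ls(\cdot)^*\le n-a$ by Proposition~\ref{prop: -heap<=stop,LJ<=heap}(\ref{item:prop: -heap<=stop,LJ<=heap:2}). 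Chaining the three pieces gives $\Ls(R_1)\ge\Ls(R_2)\ge\cdots$, which is item (3).

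\emph{Item (4).} For $n$ large, $R_i$ is hot whenever $a-i>0$ (Theorem~\ref{thm: RH positions}(\ref{item:thm: RH positions:4})), so $\Rs(R_i)=\min_{R_i^{\mathcal R}}\Ls(R_i^{\mathcal R})$ by Theorem~\ref{thm: RH positions}(\ref{item:thm: RH positions:5}). The Right options of $R_i$ relevant to this minimum are exactly $R_{i+1},\dots,R_{\min\{a,\,i+b\}}$ (any other is either of this form or, by Proposition~\ref{prop: dominated options} applied to $-R_i$, dominated by $R_a$); hence, by the monotonicity of item (3) over the indices $i+1,\dots,\min\{a,i+b\}$,
\[
\Rs(R_i)=\min_{\,i<j\le\min\{a,i+b\}}\Ls(R_j)=\Ls\bigl(R_{\min\{a,\,i+b\}}\bigr).
\]
Since $\min\{a,i+b\}\le\min\{a,i+1+b\}$ and $j\mapsto\Ls(R_j)$ is non-increasing, $\Rs(R_i)\ge\Rs(R_{i+1})$, which is item (4) (the end $i=a$ being covered by $\Rs(R_a)=-(n-a)$ read off directly). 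The hard part is item (3): one must isolate the three regimes of $a-i$, observe that in the two interesting regimes the Little John path collapses the stop to a family of positions sharing one heap size — the prerequisite for invoking Theorem~\ref{thm: stop monotonicity} — and then rule out a jump at each boundary, where a wealth degenerates to $0$ or $1$ and only the crude bounds of Proposition~\ref{prop: -heap<=stop,LJ<=heap} remain.
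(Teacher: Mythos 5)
Your proof is correct, and most of it mirrors the paper's own argument. Items (1)--(3) follow the paper's route almost verbatim: pass to {\sc Little John} stops via Theorem~\ref{thm: stop dependency}, unfold one Little John move with Lemma~\ref{lem: Wise RH is hot}(\ref{item:lem: Wise RH is hot:2}) so that the positions being compared share a common heap size ($n-b$ for the Left options, $n-a$ in the inner regime for the Right options), apply the wealth monotonicity of Theorem~\ref{thm: stop monotonicity}/Corollary~\ref{cor:stop monotone twice}, and patch the regime boundary at $i=a-b+1$ with the crude bounds of Proposition~\ref{prop: -heap<=stop,LJ<=heap}. Where you genuinely diverge is item (4). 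The paper computes $\Rs(R_i)=\Rs(n-i;a-i,b)^*$ directly, unfolding two Little John moves and splitting into the three regimes $a-i\ge 2b$, $b<a-i<2b$, $a-i\le b$ (indices $\beta=a-2b+1$, $\gamma=a-b$), again gluing at the boundaries via the $\pm(n-a)$ bounds. You instead deduce (4) from (3): since $R_i$ is hot for large $n$, $\Rs(R_i)$ is the minimum of $\Ls$ over its Right options; these options are (up to the wealth-truncated ones) the later positions $R_{i+1},\dots,R_{\min\{a,\,i+b\}}$; and the monotone chain of (3) --- which you correctly extend to all indices $i\le a$, beyond the lemma's stated range $[b]$ --- forces the minimum at the last index, so $\Rs(R_i)=\Ls(R_{\min\{a,\,i+b\}})$ is non-increasing in $i$. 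This is a genuine structural shortcut: it removes the second case analysis entirely and makes visible that (4) is a formal consequence of (3), at the price of the extended index range and the bookkeeping about which options of $R_i$ matter; the paper's version, by contrast, is a self-contained Little John computation that also produces explicit stop formulas later reused in Theorem~\ref{thm:RHthermographs}. Two small points to tighten: Proposition~\ref{prop: dominated options} is about domination of game values, not stops, but your discarded options are the numbers $-(n-i-j)$ with $n-i-j<n-a$, whose Left stops exceed $-(n-a)=\Ls(R_a)$, so they cannot affect the minimum --- say this directly; and Theorem~\ref{thm: RH positions}(\ref{item:thm: RH positions:5}) is stated under the convention $a\ge b$, so for $R_i$ with $a-i<b$ one should formally apply it to $-R_i$ and negate, which is harmless but worth a word.
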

\begin{proof}
    %\todo{for myself: define $G^{L_i}$ and $G^{R_i}$.}
    Without loss of generality, consider $a\geq b$. Now, we compare the stops of the Left options of $G$ using {\sc Little John} Stop Monotonicity, Theorem~\ref{thm: stop monotonicity} and Corollary~\ref{cor:stop monotone twice}. We use the following results several times in the proof:
    \begin{itemize}
        \item By Theorem~\ref{thm: stop dependency}, the stops of Robin Hood and Little John are the same;
        \item Lemma~\ref{lem: stop value} gives the exact stop values.
    \end{itemize}
    
    For any fixed $i\in [b]$, we have:
    \begin{align}
        \Ls(L_i)& = \Ls(n-i;a,b-i)^*\nonumber\\
                &= \Rs(n-b;a,0)^*\\%tag{since $a>b-i$}\\
                       & = n-b  \label{eq:osm 1}\\
        \Rs(L_i) & = \Rs(n-i;a,b-i)^* \nonumber\\
                       & =\begin{cases}
                           n-b & \text{ if }i=b;\\
                           \Ls(n-b;a-b+i,b-i)^* & \text{ otherwise.}
                       \end{cases} \label{eq:osm 2}
    \end{align}
    We have used that $b-i<a$ for all $i\in [b]$. Therefore, by Corollary~\ref{cor:stop monotone twice}, \begin{equation}\label{eq:osm 3}
        \Rs(L_1)\leq  \Rs(L_2)\leq \dots  \leq \Rs(L_{b-1}).
    \end{equation} And, by Proposition~\ref{prop: -heap<=stop,LJ<=heap}, 
    \begin{equation}\label{eq:osm 4}
        \Rs(L_{b-1})\leq n-b.
    \end{equation}
    Thus, equations~\eqref{eq:osm 1}-\eqref{eq:osm 4} together prove items (1) and (2).
    
    For item (3), we compare the Left stops of the Right options. By Theorem~\ref{thm: stop dependency}, for all $i\in[b]$, we have
        \begin{align}
            \Ls({R_i})& = \Ls(n-i;a-i,b)^* \\
                              & = \begin{cases}
                                   n-b-i, & \text{ if }a-i\geq b;\\
                                   \Rs(n-a;a-i,b-a+i)^*, & \text{ otherwise}.
                               	\end{cases}\label{eq:osm 0}
        \end{align}
        %\todo{I have re-written this paragraph, could you please give it a read}\ur{Yes this gets better and better now.}
        In Equation~\eqref{eq:osm 0}, the behavior of $\Ls({R_i})$ changes when $i$ increases such that $a-i<b$. So, we define $\alpha$ as the minimum natural number $i$ such that $a-i<b$, i.e., $\alpha\coloneqq \min\Set{i\in \Nat \SetSymbol a-i<b}$. Note that $\alpha=a-b+1$. Put differently, $\alpha$ is the fewest tokens Right needs to remove in $(n;a,b)$ to make Left's wealth less than Right's. But, if $\alpha>b$, then Right cannot do this. Therefore, let us first assume $\alpha\leq b$. Now, we rewrite Equation~\eqref{eq:osm 0} as:
        \begin{align}
            \Ls(R_i) & = \begin{cases}
                                   n-b-i, & \text{ if } i<\alpha;\\
                                   \Rs(n-a;a-i,b-a+i)^*, & \text{ if }i\geq \alpha.
                               	\end{cases}\label{eq:osm 5}
        \end{align}
        Then, by Corollary~\ref{cor:stop monotone twice}, we have 
        \begin{equation}\label{eq:osm 6}
            \Ls(R_\alpha)\geq \Ls(R_{\alpha+1})\geq \dots \geq \Ls(R_b).
        \end{equation}
        Now, we compare $\Ls(R_{\alpha-1})$ with $\Ls(R_\alpha)$, 
        \begin{align}
            \Ls(R_{\alpha-1})&=n-b-\alpha+1 \tag{by Eq~\eqref{eq:osm 5}}\\
                             &=n-a\nonumber\\
                            &\geq \Rs(n-a;a-\alpha,b-a+\alpha)^* \tag{by Prop~\ref{prop: -heap<=stop,LJ<=heap}}\\
                            &=\Ls(R_\alpha)\label{eq:osm 7}
        \end{align}
        Equations~\eqref{eq:osm 5}-\eqref{eq:osm 7} prove item (3) when $\alpha\leq b$. If $\alpha>b$, then the proof of item 3 is complete because $\Ls(R_i)=n-b-i$ for all $i\in [b]$. 
        
        % \begin{comment}
        % Otherwise, 
        % Again, if $\alpha=1$, the proof is completed by Equation~\eqref{eq:gen stop mono eq 6}. Otherwise, the following inequality with Equation~\eqref{eq:gen stop mono eq 5} proves the 3\textsuperscript{rd} item. 
        % \begin{align*}
        %     \Ls(G^{R_{\alpha-1}})  &=\Ls(n-\alpha+1;a-\alpha+1,b)^* \tag{by Thm~\ref{thm: stop dependency}}\\
        %                     &=n-\alpha+1-b\tag{since $a-\alpha+1=b$}\\
        %                     &> n-\alpha-b\\
        %                     &= \Ls(n-\alpha;a-\alpha+1,b)^*\\
        %                     &\geq \Ls(n-\alpha;a-\alpha,b)^*\tag{by Theorem~\ref{thm: stop monotonicity}}\\
        %                     &=\Ls(G^{R_{\alpha}}).
        % \end{align*}\end{comment}
    
    For item (4), we compare the Right stops of the Right options of $(n;a,b)$.
    \begin{align}
        \Rs(R_i)&=\Rs(n-i;a-i,b)^*\\
                  &=\begin{cases}
                      \Ls(n-i-b;a-i-b,b)^*, & \text{ if }a-i>b;\\
                      -(n-a), & \text{ otherwise}.
                  \end{cases}\\
                  &=\begin{cases}
                          n-i-2b, &\text{ if }a-i\geq 2b;\\
                          \Rs(n-a;a-i-b,2b-a+i)^*, & \text{ if } b<a-i<2b; \\
                      -(n-a), & \text{ if }a-i\leq b.
                  \end{cases}\label{eq:osm 9}
    \end{align}
    Define $\beta\coloneqq\min\Set{i\in\Nat\SetSymbol a-i< 2b}$ and $\gamma\coloneqq\min\Set{i\in\Nat\SetSymbol a-i\leq b}$. Note that $\beta=a-2b+1$ and $\gamma=a-b$. If $\gamma\leq b$, then we rewrite Equation~\eqref{eq:osm 9} as:
    \begin{align}
        \Rs(R_i)&=\begin{cases}
                          n-i-2b, &\text{ if }i<\beta;\\
                          \Rs(n-a;a-i-b,2b-a+i)^*, & \text{ if } \beta\leq i<\gamma;\\
                      -(n-a), & \text{ if }\gamma \leq i\leq b.
                  \end{cases}
    \end{align} Thus, we have    
    \begin{align}
        \Rs(R_1)\geq \Rs(R_2)\geq \dots\geq \Rs(R_{\beta-1}),\label{eq:osm 10}\\
        \Rs(R_\beta)\geq \Rs(R_{\beta+1})\geq \dots\geq \Rs(R_{\gamma-1}),\label{eq:osm 8}\\
        \Rs(R_\gamma)\geq \Rs(R_{\gamma+1})\geq \dots\geq \Rs(R_{b}),\label{eq:osm 13}
    \end{align}
    where Equation~\eqref{eq:osm 8} follows by Corollary~\ref{cor:stop  monotone twice}. Now, by Proposition~\ref{prop: -heap<=stop,RH<=heap}, 
    \begin{align}
        \Rs(R_{\beta-1}) &= (n-a)\geq \Rs(R_{\beta}),\label{eq:osm 11}\\
        \Rs(R_{\gamma-1})&\geq -(n-a)=\Rs(R_{\gamma}).\label{eq:osm 12}
    \end{align}
    Equations~\eqref{eq:osm 10}-\eqref{eq:osm 12} complete the proof when $\gamma\leq b$. If $\gamma>b$ and $\beta\leq b$, then Equations~\eqref{eq:osm 10},\eqref{eq:osm 8} and \eqref{eq:osm 11} complete the proof. The proof of case when $\beta>b$ follows by Equation~\eqref{eq:osm 10}.
\end{proof}

% \begin{lemma}
%     Consider a game $G$. If for a Left option $G^L$ of $G$, $\Ls(G^L)\geq \Ls(G^{L'})$ for all Left options $G^{L'}$ of $G$, and the thermograph of $G^L$ is a mast, then the Left slant of  $\T(G)$ does not depend on any other Left option other than $G^L$.
% \end{lemma}
%\ur{This could be moved earlier when we introduce thermographs.}

% \begin{theorem}[Robin Hood Thermographs]\label{thm:RHthermographs}
%     Let  $a,b\geq0$ be integers. Then, for sufficiently large heap sizes $n$, the thermograph of $G = (n;a,b)\;$ is the same as the thermograph of $ H=(n;a,b)^*$. 
%     % As a consequence, they have the same mean values and temperatures \ur{I think it suffices to state this in the main theorem below}. 
% \end{theorem}
% \begin{theorem}[Robin Hood Thermographs]\label{thm:RHthermographs}
%     Let  $a,b>0$ be integers. Then, for sufficiently large heap sizes $n$, Thermograph$(n;a,b)$ is the same as Thermograph$(n;a,b)^*$. 
%     % As a consequence, they have the same mean values and temperatures \ur{I think it suffices to state this in the main theorem below}. 
% \end{theorem}
\section{A solution for the Pingala Era wetland tribes}\label{sec:main}
The toolbox is now complete, and we arrive at the main theorem, in terms of thermographs. At last, in this section, we revisit Theorem~\ref{thm: main theorem} by including a short proof, interpreting {\sc Robin Hood}'s thermographs in terms of mean values and temperatures. To simplify reading the proof we locally abbreviate some of our standard notation.

\begin{notation}
     Consider a {\sc Robin Hood} game $G=(n;a,b)$. For $ i \in [b]$, let $L^i=(n-i;\, a,b-i)$ represent the Left options and let $R_i=(n-i;\, a-i,b)$ represent the Right options. %\footnote{     Recall that when $a > b$, by Proposition~\ref{prop: dominated options}, the Left options $L^i=(n-i;a,0)$, for $i\in [b+1,a]$, are dominated by $L^b$, and hence, we only consider the $L^i$s and $R_i$s for $i\in[b]$.} 
    The reason for the super- and sub-scripts is the following short hand notation for the stops of these options:
    \begin{itemize}
        \item Let $\Rs^i$ and $\Rs_i$ denote the Right stops of $L^i$ and $R_i$, respectively, and let $\Ls^i$ and $\Ls_i$ denote the Left stops of $L^i$ and $R_i$, respectively;%, and let $\Rs_i$ denote the Right stop of the Right option $G^{R_i}$. Specifically, we write:
        %\[
        %\Rs(G^{L_i}) = \Rs^i \quad \text{and} \quad \Rs(G^{R_i}) = \Rs_i.
        %\]
        \item Let $\RW^i$ and $\RW_i$ denote the large right walls of $L^i$ and $R_i$, respectively, and let $\LW^i$ and $\LW_i$ denote the large left walls of $L^i$ and $R_i$, respectively;
        \item Similarly, let $\rw^i$, $\rw_i$, $\lw^i$ and $\lw_i$ denote the small walls.
        %\]
    \end{itemize}
    This use of sub- and super-scripts can be generalized to any function on $R_i$ and $L^i$ respectively; for example $m(R_b)=m_b$ and $m(L^a)=m^a$, etc. 
    Moreover, when the options are penalized by $p$, we write $\Rs^i(p)$ and $\Ls^i(p)$ for the Right and Left stops of $p$-penalized $L^i$, respectively, and $\Rs_i(p)$ and $\Ls_i(p)$ for the Right and Left stops of $R_i$ penalized by $p$, respectively. 
    Aligning with these notations, denote a typical $G^R\in\DT$ by \(R_\delta\), with Left and Right stops \(\Ls_\delta\) and \(\Rs_\delta\), respectively, and denote a typical $G^L\in\DT$  by \(L^\delta\), with Left and Right stops \(\Ls^\delta\) and \(\Rs^\delta\), respectively. Similarly, denote \(G^R\in\ST\) by \(R_\sigma\), with stops \(\Ls_\sigma\) and \(\Rs_\sigma\), and denote \(G^L\in\ST\) by \(L^\sigma\), with stops \(\Ls^\sigma\) and \(\Rs^\sigma\). 
\end{notation}

We make use of a partial order of large left and right walls. % \ur{I think we can say it is a partial order without including the easy proof.}\an{I did not get it, could you please elaborate.}

% \begin{definition}[Wall Partial Order]
%      Let $f,g: \mathbb{D^+} \rightarrow \mathbb{D}$, and let $F=\{(y,f(y))\mid y \in\mathbb{D^+}\}$ and $G=\{(y,g(y))\mid y \in\mathbb{D}^+\}$. Then $F\ge G$ if, for all $y\in\mathbb{D^+}$, $f(y)\ge g(y)$.
% \end{definition}
% \an{my def below, this includes the change in the order of (x,y).}
\begin{definition}[Wall Partial Order]
     Let $f,g: \mathbb{D^+} \rightarrow \mathbb{D}$, and let $F=\{(f(y),y)\mid y \in\mathbb{D^+}\}$ and $G=\Set{(g(y),y)\SetSymbol y \in\mathbb{D}^+}$. Then $F\ge G$ if, for all $y\in\mathbb{D^+}$, $f(y)\ge g(y)$.
\end{definition}
Thus, for example $\LW_i\ge \LW_j$ if, for all $p\in \mathbb{D^+} $, $\Ls_i(p)\ge \Ls_j(p)$. We will see that {\sc Little John} and {\sc Robin Hood} have the same mean values and temperatures for large heaps, and the reason for that is that they have the same thermographs.
\begin{theorem}[Robin Hood Thermographs]\label{thm:RHthermographs}
    Let  $a,b\geq 0$ be integers. Then, for any sufficiently large heap size $n$, $\T(n;a,b)=\T(n;a,b)^*$. 
    % As a consequence, they have the same mean values and temperatures \ur{I think it suffices to state this in the main theorem below}. 
\end{theorem}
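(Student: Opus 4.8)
The plan is to induct on $a+b$. The base $a+b\le 1$ is immediate: if $a=0$ or $b=0$ then both $(n;a,b)$ and $(n;a,b)^*$ equal one of the numbers $0,n,-n$ by Theorem~\ref{thm: RH positions}, so their thermographs are the same mast. Moreover, $-(n;a,b)=(n;b,a)$ by Proposition~\ref{prop: -heap<=stop,RH<=heap}(\ref{prop: -heap<=stop,RH<=heap (3)}), and inspecting the options gives $-(n;a,b)^*=(n;b,a)^*$; since negating a game reflects its thermograph in the line $x=0$, we may assume $a\ge b>0$. Finally, writing $G=(n;a,b)$ and $G^*=(n;a,b)^*$, Definitions~\ref{def:thermograph} and~\ref{def:walls} give $\T(G)=\LW(G)\cup\RW(G)$, so it suffices to prove $\LW(G)=\LW(G^*)$ and $\RW(G)=\RW(G^*)$ for all sufficiently large $n$; the temperatures and the small walls then agree automatically, since they are determined by where the two large walls meet.

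\emph{The left wall.} Since $a\ge b$ and $n$ is large, the non-dominated Left option $L^b=(n-b;a,0)$ equals the number $n-b$ by Theorem~\ref{thm: RH positions}(\ref{item:thm: RH positions:2}), hence $L^b\in\M$; and by Lemma~\ref{lem: osm (option stop monoto)}(1) every Left option of $G$ has Left stop $n-b$, so $L^b$ attains the maximal Left stop. Lemma~\ref{lem: MvsO mast vs options}, applied with $H=L^b$ and using $H_p=n-b$ for all $p$, then forces $\LW(G)$ to be the slope-$(-1)$ wall through $(n-b,0)$. Since $G^*$ has the single Left option $(n-b;a,0)^*=n-b$, $\LW(G^*)$ is that same wall.

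\emph{The right wall.} The game $G^*$ has the single Right option $R_b^*=(n-b;a-b,b)^*$, whose parameter sum $a$ is less than $a+b$; by the induction hypothesis $\T(R_b)=\T(R_b^*)$, hence $\LW(R_b)=\LW(R_b^*)$. The game $G$ has the non-dominated Right options $R_i=(n-i;a-i,b)$, $i\in[b]$. I claim $\LW(R_b)\le\LW(R_i)$ in the wall partial order for every $i\in[b]$; granting this, the minimum in $\Rs(G_p)=\min_{i\in[b]}\Ls\bigl((R_i)_p\bigr)+p$ is attained at $i=b$ for all $p\ge 0$, and since $\LW(R_b)=\LW(R_b^*)$ and $R_b^*$ is the only Right option of $G^*$, this equals $\Rs(G^*_p)$; hence $\RW(G)=\RW(G^*)$ and the induction step is complete. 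To prove the claim, observe that by the induction hypothesis each $\T(R_i)=\T(R_i^*)$, which by Theorem~\ref{thm: LJthermograph} is a tent (the lone exception being $R_b=-(n-b)$, a mast, when $a=b$); by Observation~\ref{obs: temp of mast_tent}, $\LW(R_i)$ is then the piecewise-linear wall $p\mapsto\max\bigl(\Ls(R_i)-p,\,m(R_i)\bigr)$, and its data $\Ls(R_i),\Rs(R_i),m(R_i)$ are pinned down by Theorem~\ref{thm: stop dependency} together with Lemma~\ref{lem: stop value}. Lemma~\ref{lem: osm (option stop monoto)}(3),(4) gives $\Ls(R_b)\le\Ls(R_i)$ and $\Rs(R_b)\le\Rs(R_i)$. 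When $R_b$ and $R_i$ are tents of the same type, or when $R_b$ is a mast, this already yields $m(R_b)\le m(R_i)$, and $\LW(R_b)\le\LW(R_i)$ follows from monotonicity of $\max$. In the shape-mismatch cases — most delicately $R_b\in\DT$ and $R_i\in\LST$ — Lemma~\ref{lem: stop value} shows that the double tent's mean value is constant in $n$ whereas the single tent's mean value grows unboundedly with $n$, so $m(R_b)\le m(R_i)$ once $n$ is large, and $\LW(R_b)\le\LW(R_i)$ again.

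The main obstacle is precisely this claim, $\LW(R_b)\le\LW(R_i)$ for every $i\in[b]$. It requires a case analysis according to the positions of the wealth ratios of $R_b$ and of each $R_i$ relative to $\phi$ and $\phi^{-1}$ (so as to identify the tent types via Theorem~\ref{thm: LJthermograph}), the explicit stop formulas of Lemma~\ref{lem: stop value}, and a final bookkeeping step ensuring that the finitely many lower bounds on $n$ — one per option $R_i$ and one from the induction hypothesis — can all be met at once.
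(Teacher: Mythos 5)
Your proposal is correct and takes essentially the same route as the paper's proof: induction on $a+b$, the left wall settled by Lemma~\ref{lem: MvsO mast vs options} applied to the mast $L^b=n-b$, and the right wall settled by showing $\LW(R_b)\le\LW(R_i)$ using Lemma~\ref{lem: osm (option stop monoto)}, the induction hypothesis with Theorems~\ref{thm: stop dependency} and~\ref{thm: LJthermograph}, and the large-$n$ comparison of mean values in the shape-mismatch cases. Your uniform description $\LW(R_i)(p)=\max\bigl(\Ls(R_i)-p,\,m(R_i)\bigr)$ simply repackages the paper's geometric case analysis (its cases (1A)--(3)) into the two scalar inequalities $\Ls_b\le\Ls_i$ and $m_b\le m_i$, so the substance is identical.
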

\begin{proof}
    Let $G=(n;a,b)$ and let $H=(n,a,b)^*$. Our goal is to demonstrate that, for $n$ sufficiently large, the thermographs of $H$ and $G$ are identical. Thus, it suffices to show that, for large $n$, the thermograph of $G$ solely depends on the Little John options.
    
    We induct on $a+b$. Without loss of generality, consider $a\geq b$, as $\T(n;a,b)$ is the mirror image of $\T(n,b,a)$. 
    
    If $a=b=0$, then $G=0=H$ and if $a>b=0$, then $G=n=H$. %Hence, the statement holds when one of $a,b$ is 0. 
    Thus, the result holds in these cases, so suppose both $a$ and $b$ are non-zero.
    
    If $a+b=2$, then $(n;1,1)=\{(n-1)\mid-(n-1)\}=(n;1,1)^*$. Hence, the thermographs of $G$ and $H$ are the same. 

    Now, suppose that the statement holds for the options of $G$, for all sufficiently large heap sizes $n$. 
        
    % By induction, for all sufficiently large heap sizes $n$, for all $i$, the thermographs of $G^{L_i}$ and $G^{R_i}$ are the same as those of the Little John games with the same parameters. We use this repeatedly without further mention.

    We begin by examining the Left options of $G$ and their corresponding thermographs.
    By Option Stop Monotonicity, Lemma~\ref{lem: osm (option stop monoto)}, we have
    \begin{equation}
        \Ls^1=\Ls^2=\dots=\Ls^b=n-b=\Rs^b\geq \Rs^{b-1}\geq \dots \geq \Rs^1.
        %\Ls(G^{L_1})= \Ls(G^{L_2})=\dots= \Ls(G^{L_b})=n-b=\Rs(G^{L_b})\geq \Rs(G^{L_{b-1}})\geq\dots\geq \Rs(G^{L_1}).
    \end{equation}
    This implies that $L^b$ is the option with the Largest Left stop and the thermograph of $L^b$ is a mast at $n-b$. Hence, by Lemma~\ref{lem: MvsO mast vs options}, the left wall of $\T(G)$ does not depend on any Left option other than $L^b$, which is the Little John option. 

    For the rest of the proof, we demonstrate that, for large $n$, $\RW(G)$ does not depend on any Right option other than the Little John option $R_b$.  That is, we prove that, for large $n$, for all $i$, $\LW_b\le\LW_i$. %, i.e., $\Ls'(t)\geq \Ls^b(t)$.  %$\Ls({G^{R'}}_t)\geq \Ls({G^{R_b}}_t)$
    By induction, the thermographs of the Right options are the same as those of the corresponding {\sc Little John} thermographs, and consequently they depend on the option's wealth ratio $w_i=\frac{a-i}{b}$, for $i\in[b]$. Recalling Theorem~\ref{thm: LJthermograph}, we have $R_i\in \RST$ if $w_i<\phi^{-1}$, $R_i\in \DT$ if $\phi^{-1}<w_i<\phi$, and $R_i\in \LST$, otherwise.
    
    We take different cases based on $G$'s wealth ratio $\frac{a}{b}$.

    \begin{enumerate}[wide, labelindent=0pt, topsep=5pt, parsep=3pt]
    \item[\bf{(1)}] $\bm{\frac{a}{b}> \phi:}$
        By $i\in [b]$, the wealth ratios, \( w_i=\frac{a-i}{b} \), of the Right options fall within the interval \( (\phi^{-1}, \frac{a}{b}) \). Hence, by induction, their thermographs are either $\DT$ or $\LST$. When the ratio  \(w_i\) drops below \( \phi \), the Right options' thermographs change from $\LST$ to $\DT$. Therefore, we define \( \alpha \) as the smallest Right removal ($i$) for which $R_i\in \DT$, i.e.,
        \[
        \alpha = \min \left\{i \geq 1 : w_i < \phi\right\}.
        \]
    
        By this definition, if \( \alpha > b \), then the thermographs of all Right options are left single tents. Otherwise, at least one of the thermographs is a double tent. So, we take sub-cases based on $\alpha$.
        
        Recall, by Lemma~\ref{lem: osm (option stop monoto)},  
        \begin{align}
            \Ls_1&\geq \Ls_2 \geq \dots\geq \Ls_b,\text{ and}\label{eq: decreasing LGRi}\\
            \Rs_1&\geq\Rs_2\geq \dots\geq\Rs_b\label{eq: decreasing RGRi}.
        \end{align}
        \begin{enumerate}[wide, labelindent=0pt, topsep=5pt, parsep=3pt]
            \item[\bf{(1A)}] $\bm{\alpha>b\!:}$
            Let $R_\sigma\in \LST$ be any Right option other than $R_b\in\LST$. We must prove that $\LW_b\le \LW_\sigma$.  %Figures~\ref{fig: Thermograph of Right options in case 1A1} and \ref{fig: Thermograph of Right options in case 1A2}, it suffices to prove that $\LW(R_b)$ is never to the left of $\LW(R_\sigma)$. 
            %In both Figures~\ref{fig: Thermograph of Right options in case 1A1} and \ref{fig: Thermograph of Right options in case 1A2}, 
            Clearly $\Ls_b\le \Ls_\sigma$, by the monotonicity property in Equation~\eqref{eq: decreasing LGRi}, so $\LW_b\not>\LW_\sigma$. 
            
            Assume, for a contradiction, that $\LW_b\not\le\LW_\sigma$. Since $\Ls_b\le \Ls_\sigma$, this assumption implies that $\LW_\sigma$ crosses $\LW_b$ at some point, that is, there is a penalty $p$ such that $\Ls_b(p)>\Ls_\sigma(p)$. The only way this can occur is if $\lw_\sigma$ intersects $\LW_b$ %\sout{at the level of the mast of $\T(R_b)$} 
            above $t_b$ (see Figure~\ref{fig: Thermograph of Right options in case 1A contradiction}). In this case, since both $R_b,R_\sigma\in \LST$, we must have $\RW_\sigma < \RW_b$. This would imply $\Rs_\sigma < \Rs_b$, which contradicts the monotonicity property in Equation~\eqref{eq: decreasing RGRi}. Hence, we are left with the situations in Figures~\ref{fig: Thermograph of Right options in case 1A1} and \ref{fig: Thermograph of Right options in case 1A2} that both confirm that %\sout{the Little John move applies}
            $\LW_b\le \LW_\sigma$. 

            \begin{figure}[ht!]
		\centering
	 	\begin{subfigure}[b]{0.3\textwidth} 
                \begin{tikzpicture}[>=stealth, scale=0.6, dot/.style = {circle, fill, minimum size=#1, inner sep=0pt, outer sep = 0pt}, dot/.default = 6pt]  % size of the circle diameter  
                    %\node[dot,label=below left:radius 3pt] at (0,0) { };
                    \draw [<->] (-1,0) -- (5.5,0); %node [at end, right] {$p=0$};
                    %\node[dot, fill=green!50!black, scale=0.3, label={[text=green!50!black]below:A}] (l1) at (0,0) { };
                    %\node[dot, scale=0.3, fill=green!50!black, label={[text=green!50!black]below:B}] (r1) at (1.6,0) { };
                    %\node[dot, scale=0.3, fill=red, label={[text=red]below:E}] (l2) at (1,0) { };
                    %\node[dot, scale=0.3, fill=red, label={[text=red]below:F}] (r2) at (4,0) { };
                    %\draw[green!50!black] (l1)--+(1.6,1.6) node[dot, scale=0.3,label=right:C] (C) { };
                    \node[dot, fill=green!50!black, scale=0.3] (l1) at (0,0) { };
                    \node[dot, scale=0.3, fill=green!50!black] (r1) at (1.6,0) { };
                    \node[dot, scale=0.3, fill=red] (l2) at (1,0) { };
                    \node[dot, scale=0.3, fill=red] (r2) at (4,0) { };
                    \draw[green!50!black, very thick] (l1)--+(1.6,1.6) node[dot, scale=0.4] (C) { };
                    %\draw[dotted] (4.5,1.6)--(0,1.6) node[at end, label=left:{$t=t_\sigma$}] (t1) { };
                    %\draw[green!50!black] (r1)--+(0,4) node[anchor=west] (D) {D};
                    %\draw[red] (l2)--+(3,3) node[dot, scale=0.3,label=right:G] (G) { };
                    %\draw[red] (r2)--+(0,4) node[anchor=west] (H) {H};
                    \draw[green!50!black] (r1)--+(0,1.6);
                    \draw[green!50!black, very thick] (1.6,4)--(1.6,1.6);
                    \draw[red, very thick] (l2)--+(3,3) node[dot, scale=0.4] (G) { };
                    \draw[red] (r2)--+(0,3);
                    \draw[red, very thick] (4,3)--(4,4);
                    %\draw[dotted] (4.5,3)--(0,3) node[at end, label=left:{$t=t_b$}] (tb) { };
                    \node[color=green!50!black] (rb) at (2.2,3.8) {\bm{$R_\sigma$}};
                    \node[color=red] (rb) at (4.5,3.8) {\bm{$R_b$}};
                    %\matrix[yshift=0cm, draw=none, anchor=north east] at (current bounding box.north east) {
                    %    \node[label=right:{\color{red}{\footnotesize \bm{$R_b$}}}] { }; \\
                    %    \node[label=left:{\color{green!50!black} {\footnotesize \bm{$R_\sigma$}}}] { }; \\
                    %};
		    \end{tikzpicture}
                \caption{The case $\LW_\sigma\ge\LW_b$ when $t_b\ge t_\sigma$.}
                \label{fig: Thermograph of Right options in case 1A1}
	 	\end{subfigure}%
			\hfill
            \begin{subfigure}[b]{0.3\textwidth}
                \begin{tikzpicture}[>=stealth, scale=0.6, dot/.style = {circle, fill, minimum size=#1, inner sep=0pt, outer sep = 0pt}, dot/.default = 6pt]  % size of the circle diameter  
                    %\node[dot,label=below left:radius 3pt] at (0,0) { };
                    \draw [<->] (-1,0) -- (5.5,0); %node [at end, right] {$p=0$};
                    \node[dot, fill=green!50!black, scale=0.3] (l1) at (0,0) { };
                    \node[dot, scale=0.3, fill=green!50!black] (r1) at (2.6,0) { };
                    \node[dot, scale=0.3, fill=red] (l2) at (2,0) { };
                    \node[dot, scale=0.3, fill=red,] (r2) at (3.8,0) { };
                    \draw[green!50!black, very thick] (l1)--+(2.6,2.6) node[dot, scale=0.3] (C) { };
                    \draw[green!50!black] (C)--(r1);
                    \draw[green!50!black, very thick] (C)--+(0,1.4) node[anchor=east] (D) {\bm{$R_\sigma$}};
                    \draw[red, very thick] (l2)--+(1.8,1.8) node[dot, scale=0.3] (G) { };
                    \draw[red] (G)--(r2);
                    \draw[red, very thick] (G)--+(0,2.2) node[anchor=west] (H) {\bm{$R_b$}};
                    %\draw[dotted] (4.5,2.6)--(0,2.6) node[at end, label=left:{$p=t_\sigma$}] (t1) { };
                    %\draw[dotted] (4.5,1.8)--(0,1.8) node[at end, label=left:{$p=t_b$}] (tb) { };
                    %\matrix[yshift=0.80cm, draw=none, anchor=north east] at (current bounding box.north east) {
                    %    \node[label=right:{\color{red}{\footnotesize \bm{$R_b$}}}] { }; \\
                    %    \node[label=right:{\color{green!50!black} {\footnotesize \bm{$R_\sigma$}}}] { }; \\
                    %};
		    \end{tikzpicture}
                \caption{The case $\LW_\sigma\ge\LW_b$ when $t_b<t_\sigma$.}
                \label{fig: Thermograph of Right options in case 1A2}
            \end{subfigure}%
                \hfill 
            \begin{subfigure}[b]{0.3\textwidth} 
                \begin{tikzpicture}[>=stealth, scale=0.6, dot/.style = {circle, fill, minimum size=#1, inner sep=0pt, outer sep = 0pt}, dot/.default = 6pt]  % size of the circle diameter  
                    %\node[dot,label=below left:radius 3pt] at (0,0) { };
                    \draw [<->] (-1,0) -- (5.5,0); %node [at end, right] {$p=0$};
                    \node[dot, fill=green!50!black, scale=0.3] (l1) at (0,0) { };
                    \node[dot, scale=0.3, fill=green!50!black] (r1) at (3.5,0) { };
                    \node[dot, scale=0.3, fill=red] (l2) at (0.5,0) { };
                    \node[dot, scale=0.3, fill=red] (r2) at (2,0) { };
                    \draw[green!50!black, very thick] (l1)--+(3.5,3.5) node[dot, scale=0.3] (C) { };
                    %\draw[dotted] (6,4)--(0,4) node[at end, label=left:{$t=t'$}] (t1) { };
                    \draw[green!50!black, very thick] (C)--+(0,1) node[anchor=west] (D) {\bm{$R_\sigma$}};
                    \draw[green!50!black] (r1)--(C); 
                    \draw[red, very thick] (l2)--+(1.5,1.5) node[dot, scale=0.3] (G) { };
                    \draw[red, very thick] (G)--+(0,3) node[anchor=east] (H) {\bm{$R_b$}};
                    \draw[red] (r2)--(G);
                    \draw [<-, red] (3.4,.9) -- (2.1,.9);
                    \draw [->, green!50!black] (3.4,1.1) -- (2.1,1.1);
                    %\draw[dotted] (4.5,3)--(0,3) node[at end, label=left:{$t=t_b$}] (tb) { };
                    
                    %\matrix[yshift=0.80cm, draw=none, anchor=north east] at (current bounding box.north east) {
                     %   \node[label=right:{\color{red}{\footnotesize \bm{$R_b$}}}] { }; \\
                     %   \node[label=right:{\color{green!50!black} {\footnotesize \bm{$R_\sigma$}}}] { }; \\
                    %};
		    \end{tikzpicture}
                \caption{The case $\LW_\sigma\not\geq\LW_b$ is impossible, by monotonicity \eqref{eq: decreasing RGRi}.}
                \label{fig: Thermograph of Right options in case 1A contradiction}
	 	\end{subfigure}%
            \caption{Thermographs of Right options of $G$ in case~(1A) when both $R_b, R_\delta\in \RST$.}
            \label{fig: Thermograph of Right options in case 1A}
	\end{figure}

        \item[\bf{(1B)}] {$\bm{\alpha\leq b\!:}$} %\sout{By induction, if $\alpha\leq i\leq b$, then $R_i\in\DT$, since $w_i<\phi$, by definition of $\alpha$, and since $\phi^{-1}<w_i$, by $a/b>\phi$.} 
        If $\alpha\leq i\leq b$, then by induction and definition of $\alpha$, $R_i\in\DT$. Otherwise, if $i<\alpha$, then $R_i\in\LST$. Therefore, in this case, we must compare $R_b\in\DT$ with both $R_\sigma\in\LST$ and $R_\delta\in \DT$.

        We start with $R_b\in \DT$ and $R_\sigma\in\LST$.\\ %Let $R_\sigma$ be a Right option of $G$ such that $R_\sigma\in\LST$. 
        
        \noindent {\bf Claim:} If $R_b\in \DT$ and $R_\sigma\in\LST$, then, for sufficiently large heap sizes,  $m_b\leq m_\sigma$.
        
        \noindent{\emph{Proof of Claim.}}
        By Lemma~\ref{lem: stop value} and Theroem~\ref{thm: stop dependency}, we have
        \begin{align}
           \Ls_b &= n + c_1, \label{case1b1:eq1}\\
           \Rs_b &= -n + c_2, \label{case1b1:eq2}\\
           \Rs_\sigma  &= n + c_3, \label{case1b1:eq3}
        \end{align} where $c_1,c_2$ and $c_3$ are constants with respect to $n$. 
        Since $R_b\in\DT$, Equations~\eqref{case1b1:eq1}-\eqref{case1b1:eq2} and Lemma~\ref{lem: temp of tents}  together imply $m_b= \left(\Ls_b+\Rs_b\right)/2 = \left(c_1+c_2\right)/2$. 
        Similarly, Equation~\eqref{case1b1:eq3} imply $m_\sigma=\Rs_\sigma = n+c_3$ as $R_\sigma\in\LST$. Therefore, since we are taking $n$ to be large, we have 
        \begin{equation}\label{case1b1:eq4}
            m_b\leq m_\sigma.
        \end{equation}% and hence, this case does not appear.
        
        %We separate the remaining proof of (1B) into two cases.
        %\begin{enumerate}
            %\item $m(R_b)>m(R_\sigma)$ (see Figure~\ref{fig: Thermograph of Right options in case 1b1}),
        %    \item $t(R_b)\geq t(R_\sigma)$ (see Figure~\ref{fig: Thermograph of Right options in case 1b2}),
        %    \item $t(R_b)< t(R_\sigma)$ (see Figure~\ref{fig: Thermograph of Right options in case 1b3}).
        %\end{enumerate}
        
        \begin{figure}[ht!]
		\centering
	 	\begin{subfigure}[b]{0.45\textwidth} 
                \begin{tikzpicture}[>=stealth, scale=0.6, dot/.style = {circle, fill, minimum size=#1, inner sep=0pt, outer sep = 0pt}, dot/.default = 6pt]  % size of the circle diameter  
                    %\node[dot,label=below left:radius 3pt] at (0,0) { };
                    \draw [<->] (-1,0) -- (7,0) node [at end, right] { };
                    \node[dot, fill=green!50!black, scale=0.3] (l1) at (0,0) { };
                    \node[dot, scale=0.3, fill=green!50!black] (r1) at (2,0) { };
                    \node[dot, scale=0.3, fill=red] (l2) at (1,0) { };
                    \node[dot, scale=0.3, fill=red] (r2) at (6,0) { };
                    \draw[green!50!black, very thick] (l1)--+(2,2) node[dot, scale=0.3] (C) { };
                    \draw[dotted] (7,2)--(0,2) node[at end, label=left:{$p=t_\sigma$}] (t1) { };
                    \draw[green!50!black, very thick] (C)--+(0,2.5) node[anchor=east] (D) {$\bm{R_\sigma}$};
                    \draw[green!50!black] (r1)--(C);
                    \draw[red,very thick] (l2)--+(2.5,2.5) node[dot, scale=0.3] (G) { };
                    \draw[red] (r2)--(G);
                    \draw[red, very thick] (G)--+(0,2) node[anchor=west] (H) {$\bm{R_b}$};
                    \draw[dotted] (6,2.5)--(0,2.5) node[at end, label=left:{$p=t_b$}] (tb) { };
                    %\matrix[yshift=0.0cm, draw=none, anchor=north east] at (current bounding box.north east) {
                    %    \node[label=right:{\color{red}{\footnotesize \bm{$R_b$}}}] { }; \\
                    %    \node[label=right:{\color{green!50!black} {\footnotesize \bm{$R_\sigma$}}}] { }; \\
                    %};
		    \end{tikzpicture}
                \caption{$\LW_b$ and $\LW_\sigma$, when $t_b\ge t_\sigma$.}
                \label{fig: Thermograph of Right options in case 1b2}
	 	\end{subfigure}%
			\hfill
            \begin{subfigure}[b]{0.45\textwidth}
                \begin{tikzpicture}[>=stealth, scale=0.6, dot/.style = {circle, fill, minimum size=#1, inner sep=0pt, outer sep = 0pt}, dot/.default = 6pt]  % size of the circle diameter  
                    %\node[dot,label=below left:radius 3pt] at (0,0) { };
                    \draw [<->] (-1,0) -- (6.5,0) node [at end, right] { };
                    \node[dot, fill=green!50!black, scale=0.3] (l1) at (0,0) { };
                    \node[dot, scale=0.3, fill=green!50!black] (r1) at (3,0) { };
                    \node[dot, scale=0.3, fill=red] (l2) at (2,0) { };
                    \node[dot, scale=0.3, fill=red] (r2) at (6,0) { };
                    \draw[green!50!black, very thick] (l1)--+(3,3) node[dot, scale=0.3] (C) { };
                    \draw[green!50!black, very thick] (C)--+(0,1.5) node[anchor=east] (D) {$\bm{R_\sigma}$};
                    \draw[green!50!black] (r1)--(C);
                    \draw[red, very thick] (l2)--+(2,2) node[dot, scale=0.3] (G) { };
                    \draw[red] (r2)--(G);
                    \draw[red, very thick] (G)--+(0,2.5) node[anchor=west] (H) {$\bm{R_b}$};
                    \draw[dotted] (6,2)--(0,2) node[at end, label=left:{$p=t_b$}] (tb) { };
                    \draw[dotted] (6,3)--(0,3) node[at end, label=left:{$p=t_\sigma$}] (t1) { };
                    %\matrix[yshift=0.0cm, draw=none, anchor=north east] at (current bounding box.north east) {
                    %    \node[label=right:{\color{red}{\footnotesize \bm{$R_b$}}}] { }; \\
                    %    \node[label=right:{\color{green!50!black} {\footnotesize \bm{$R_\sigma$}}}] { }; \\
                    %};
		    \end{tikzpicture}
                \caption{$\LW(R_b)$ and $\LW(R_\sigma)$, when $t(R_b)<t(R_\sigma)$.}
                \label{fig: Thermograph of Right options in case 1b3}
            \end{subfigure}
            \caption{Thermographs of Right options of $G$, in case~(1B).}
            \label{fig: Thermograph of Right options in case 1b}
	\end{figure}

        %We begin with the first case when $m(R_b)>m(R_\sigma)$. 
       Since both small left walls, \(\lw_b\) and \(\lw_\sigma\), have slope \(-1\), and \(\Ls_b \leq \Ls_\sigma\) as well as \(m_b \leq m_\sigma\), the proof of \(\LW_b \leq \LW_\sigma\) follows similarly to Case~(1A). Therefore, we omit the details. % \sout{by $\Ls_b\le \Ls_\sigma$ combined with Equation~\eqref{case1b1:eq4}, it follows $\LW_\delta\ge \LW_b$.} 
       This argument does not depend on other properties of the thermographs, such as the relation between temperatures; see Figure~\ref{fig: Thermograph of Right options in case 1b}.

        %Next, we consider the other two cases where $m(R_b)\leq m(R_\sigma)$. In these cases, 
        %The proof that EGH is not to the left of ACD follows by extending the line HG to $p=0$ in Figures~\ref{fig: Thermograph of Right options in case 1b1} and \ref{fig: Thermograph of Right options in case 1b2} and applying Case~(1A).

        %The next step is to establish the result when $m(G^{R_b})\leq m(G^{R'})$ and $t(G^{R_b})< t(G^{R'})$. Again, by applying the second case of subcase~\ref{case1.1}, where $t(G^{R_b})< t(G^{R'})$, and extending the HG line in Figure~\ref{fig: Thermograph of Right options in case 1b3} to $t=0$, we obtain the desired outcome.

        We omit the analysis of $R_b\in \DT$ and $R_\delta \in \DT$, as it parallels that of $R_b\in\LST$ with $R_\sigma\in\LST$ in Case~(1A), with the only distinction being the argument that $m(R_b)\leq m(R_\delta)$ due to the shape of the thermographs and the monotonicity of the stops. 
        This completes the proof in this case.  
        \end{enumerate}
        
        \item[\bf{(2)}] $\bm{1<a/b<\phi\!:}$ 
        In this case, the wealth ratios \(w_i=(a-i)/b\) for the Right options fall within the interval \((0, \phi)\). By the induction hypothesis, $R_i\in \DT\cup\RST$. Define \(\beta\) as the smallest right removal index for which $R_i\in\RST$, i.e.,  
        \[
        \beta = \min\Set{i \geq 1 \SetSymbol w_i < \phi^{-1}}.
        \]  
        Observe that \(\beta \leq b\), since \((a-b)/b = a/b - 1 < \phi^{-1}\). Consequently, \(R_\delta\in \DT\) for \(\delta < \beta\), while $R_\sigma\in\RST$, if $\sigma\ge \beta$. Hence, we must compare the large left wall of $R_b\in\RST$ with those of both $R_\delta\in\DT$ and $R_\sigma\in\RST$.
        
        We begin by verifying that $\LW_b\le \LW_\delta$, for large $n$. %Let $R_\delta$ be a Right option of $G$ such that $\T(R_\delta)$ is DT. 
 %        Similar to case~(1B), the claim that this problem can be split into two cases:
 %        \begin{enumerate}
 %            \item $m(R_b)>m(R_\delta)$ (see Figure~\ref{fig: Thermograph of Right options in case 2.1}),
 %            \item $m(R_b)\leq m(R_\delta)$ (see Figure~\ref{fig: Thermograph of Right options in case 2.2}).
 %        \end{enumerate}  

         \begin{figure}[ht!]
	 	\centering
	  	\begin{subfigure}[b]{0.45\textwidth}
                \centering
                 \begin{tikzpicture}[>=stealth, scale=0.6, dot/.style = {circle, fill, minimum size=#1, inner sep=0pt, outer sep = 0pt}, dot/.default = 6pt]  % size of the circle diameter  
                     %\node[dot,label=below left:radius 3pt] at (0,0) { };
                     \draw [<->] (-1,0) -- (6.5,0) node [at end, right] { };
                     \node[dot, fill=green!50!black, scale=0.3] (l1) at (0,0) { };
                     \node[dot, scale=0.3, fill=green!50!black] (r1) at (3,0) { };
                     \node[dot, scale=0.3, fill=red] (l2) at (1,0) { };
                     \node[dot, scale=0.3, fill=red] (r2) at (4.5,0) { };
                     \draw[green!50!black, very thick] (l1)--+(1.5,1.5) node[dot, scale=0.3] (C) { };
                     %\draw[dotted] (5,4)--(0,4) node[at end, label=left:{$t=t'$}] (t1) { };
                     \draw[green!50!black] (r1)--(C);
                     \draw[green!50!black, very thick] (C)--+(0,3) node[anchor=west] (D) {$\bm{R_\delta}$};
                     \draw[red, very thick] (l2)--+(0,3.5) node[dot, scale=0.3] (G) { };
                     \draw[red] (r2)--(G);
                     \draw[red, very thick] (G)--+(0,1) node[anchor=east] (H) {$\bm{R_b}$};
                     \draw[red,->] (3,2)--(4,2);
                     %\draw[dotted] (4.5,3)--(0,3) node[at end, label=left:{$t=t_b$}] (tb) { };
                    
                    % \matrix[yshift=0.0cm, draw=none, anchor=north east] at (current bounding box.north east) {
                    %     \node[label=right:{\color{red}{\footnotesize \bm{$R_b$}}}] { }; \\
                    %     \node[label=right:{\color{green!50!black} {\footnotesize \bm{$R_\delta$}}}] { }; \\
                    ;
	 	    \end{tikzpicture}
                \caption{The case $m_b>m_\delta$ does not appear for large $n$.}
                 \label{fig: Thermograph of Right options in case 2.1}
	  	\end{subfigure}%
	 		\hfill
             \begin{subfigure}[b]{0.45\textwidth} 
                \centering
                \begin{tikzpicture}[>=stealth, scale=0.6, dot/.style = {circle, fill, minimum size=#1, inner sep=0pt, outer sep = 0pt}, dot/.default = 6pt]  % size of the circle diameter  
                     %\node[dot,label=below left:radius 3pt] at (0,0) { };
                     \draw [<->] (-1,0) -- (7,0) node [at end, right] { };
                     \node[dot, fill=green!50!black, scale=0.3] (l1) at (0,0) { };
                     \node[dot, scale=0.3, fill=green!50!black] (r1) at (3,0) { };
                     \node[dot, scale=0.3, fill=red] (l2) at (2,0) { };
                     \node[dot, scale=0.3, fill=red] (r2) at (4.5,0) { };
                     \draw[green!50!black, very thick] (l1)--+(1.5,1.5) node[dot, scale=0.3] (C) { };
                     \draw[green!50!black] (r1)--(C);
                     \draw[green!50!black, very thick] (C)--+(0,3) node[anchor=east] (D) {$\bm{R_\delta}$};
                     \draw[red, very thick] (l2)--+(0,2.5) node[dot, scale=0.3] (G) { };
                     \draw[red] (r2)--(G);
                     \draw[red, very thick] (G)--+(0,2) node[anchor=west] (H) {$\bm{R_b}$};
                                       
                     %\matrix[yshift=0.0cm, draw=none, anchor=north east] at (current bounding box.north east) {
                     %    \node[label=right:{\color{red}{\footnotesize \bm{$R_b$}}}] { }; \\
                     %    \node[label=right:{\color{green!50!black} {\footnotesize \bm{$R_\delta$}}}] { }; \\
                     %};
	 	    \end{tikzpicture}
                 \caption{The case $m_b\le m_\delta$ appears for large $n$.}
                 \label{fig: Thermograph of Right options in case 2.2}
	  	\end{subfigure}
            \caption{Thermographs of the Right options $R_b$ and $R_\delta$ of $G$ in case~(2).}
             \label{fig: Thermograph of Right options in case (2)}
	 \end{figure}

 %        Analogous to Case~(1B), when $m(R_b)>m(R_\delta)$, we have
 %        \begin{align}
 %            \Ls_b = -n+b_1 \label{case2.1: eq1}\\
 %            \Ls_\delta = n+b_2\label{case2.1: eq2}\\
 %            \Rs_\delta = -n+b_3\label{case2.1: eq3}
 %        \end{align}
 %        by applying Lemma~\ref{lem: stop value} and Theorem~\ref{thm: stop dependency}, where $b_1$, $b_2$ and $b_3$ are constants with respect to $n$. Now, the shape of $\T(R_b)$ and Equation~\eqref{case2.1: eq1} imply
 %        $$m(R_b) = \Ls_b = -n+b_1.$$
 %        Similarly, the shape of $\T(R_\delta)$ and Equations~\eqref{case2.1: eq2} and \eqref{case2.1: eq3} together imply
 %        $$m(R_\delta) = (\Ls_\delta+\Rs_\delta)/2 = (b_2+b_3)/2 \Ls_b.$$ 
        Similar to the claim in case (1B), for large $n$, we get $m_b\leq m_\delta$. We indicate with a red arrow in Figure~\ref{fig: Thermograph of Right options in case 2.1} that $\T(R_b)$ shifts to the right with increasing $n$, which instead creates a situation as in (b). Indeed, 
        since $R_b\in \RST$, $\LW_b$ is a vertical line, and thus $\LW_\delta\ge \LW_b$ by stop monotonicity and $m_b\leq m_\delta$ (see Figure~\ref{fig: Thermograph of Right options in case 2.2}).

        Next we compare $R_b\in\RST$ with $R_\sigma\in \RST$. %Let $R_\sigma$ be a Right option of $G$ other than $R_b$ such that $\T(R_\sigma)$ is RST. 
        %Our objective is to show that no part of $\LW(R_b)$ is to the left of $\LW(R_\sigma)$. 
        In this case, the Left walls of both $R_b$ and $R_\sigma$ are vertical lines determined by their Left stops, i.e., for all $p\ge 0$,
        %\begin{align}
            $\Ls_b(p)=\Ls_b$ and
            $\Ls_\sigma(p)=\Ls_\sigma$. 
        %\end{align}
        By Lemma~\ref{lem: osm (option stop monoto)}, we have $\Ls_b\leq \Ls_\sigma$. Hence, $\LW_b\le \LW_\sigma$. 

        %This completes the proof of case~(2).

        \item[\bf{(3)}] $\bm{a/b=1\!:}$ In this case, $R_b=(n-a;0,b) = -(n-a)$. Thus, $R_b\in \M$. Now, by Lemma~\ref{lem: osm (option stop monoto)}, $R_b$ is the Right option of $G$ with the smallest Right stop and hence, by applying Corollary~\ref{cor: MvsO mast vs options}, $\RW(G)$ does not depend on any other Right option than $R_b$.
        
    \end{enumerate}
Thus the {\sc Robin Hood} thermographs are the same as those of {\sc Little John}.
\end{proof}

The claim in Case~(1B) in the proof of Theorem~\ref{thm:RHthermographs}, might not hold for smaller heap sizes, and we explain what can happen in terms of thermographs in Figure~\ref{fig: Right slant of G in case 1b1}; our ``tent structures'' may not survive for small heap sizes.

        \begin{figure}[ht!]
		\centering
	 	\begin{subfigure}[b]{0.45\textwidth} 
                \begin{tikzpicture}[>=stealth, scale=0.6, dot/.style = {circle, fill, minimum size=#1, inner sep=0pt, outer sep = 0pt}, dot/.default = 6pt]  % size of the circle diameter  
                    %\node[dot,label=below left:radius 3pt] at (0,0) { };
                    \draw [<->] (-1,0) -- (6.5,0) node [at end, right] { };
                    \node[dot, fill=green!50!black, scale=0.3] (l1) at (0,0) { };
                    \node[dot, scale=0.3, fill=green!50!black] (r1) at (4,0) { };
                    \node[dot, scale=0.3, fill=red] (l2) at (1.5,0) { };
                    \node[dot, scale=0.3, fill=red] (r2) at (4.5,0) { };
                    \draw[green!50!black, very thick] (l1)--+(4,4) node[dot, scale=0.3] (C) { };
                    %\draw[dotted] (5,4)--(0,4) node[at end, label=left:{$t=t'$}] (t1) { };
                    \draw[green!50!black] (r1)-- (C);
                    \draw[green!50!black, very thick] (C)--+(0,1) node[anchor=west] (D) {$\bm{R_\sigma}$};
                    \draw[red, very thick] (l2)--+(1.5,1.5) node[dot, scale=0.3] (G) { };
                    \draw[red] (r2)--(G);
                    \draw[red, very thick] (G)--+(0,3) node[anchor=east] (H) {$\bm{R_b}$};
                    %\draw[dotted] (4.5,3)--(0,3) node[at end, label=left:{$t=t_b$}] (tb) { };
                    
                    % \matrix[yshift=0.0cm, draw=none, anchor=north east] at (current bounding box.north east) {
                        % \node[label=right:{\color{red}{\footnotesize \bm{$R_b$}}}] { }; \\
                        % \node[label=right:{\color{green!50!black} {\footnotesize \bm{$R_\sigma$}}}] { }; \\
                    % };
		    \end{tikzpicture}
                \caption{Thermographs of $R_b$ and $R_\sigma$ if $m_b< m_\sigma$.}
                \label{fig: Thermograph of Right options in case 1b1}
	 	\end{subfigure}%
			\hfill
            \begin{subfigure}[b]{0.45\textwidth}
                \begin{tikzpicture}[>=stealth, scale=0.6, dot/.style = {circle, fill, minimum size=#1, inner sep=0pt, outer sep = 0pt}, dot/.default = 6pt]  % size of the circle diameter  
                   \draw [<->] (-1,0) -- (6.5,0) node [at end, right] { };
                    \node[dot, fill=green!50!black, scale=0.3] (l1) at (0,0) { };
                    \node[dot, scale=0.3, fill=green!50!black] (r1) at (4,0) { };
                    \node[dot, scale=0.3, fill=red] (l2) at (1.5,0) { };
                    \node[dot, scale=0.3, fill=red] (r2) at (4.5,0) { };
                    \draw[green!50!black, very thick] (l1)--+(4,4) node[dot, scale=0.3] (C) { };
                    %\draw[dotted] (5,4)--(0,4) node[at end, label=left:{$t=t'$}] (t1) { };
                    \draw[green!50!black] (r1)-- (C);
                    \draw[green!50!black, very thick] (C)--+(0,1) node[anchor=west] (D) {$\bm{R_\sigma}$};
                    \draw[red, very thick] (l2)--+(1.5,1.5) node[dot, scale=0.3] (G) { };
                    \draw[red] (r2)--(G);
                    \draw[red, very thick] (G)--+(0,3) node[anchor=east] (H) {$\bm{R_b}$};
                    %\draw[dotted] (4.5,3)--(0,3) node[at end, label=left:{$t=t_b$}] (tb) { };
                    \draw[red,thick, preaction={draw,yellow,-,double=yellow, double distance=1mm}] (l2.center) -- +(0,1.5) node (e1) { };
                    \draw[red,thick, preaction={draw,yellow,-,double=yellow, double distance=1mm}] (e1.center)-- +(-1.5,1.5) node (e2) { };
                    \draw[red,thick, preaction={draw,yellow,-,double=yellow, double distance=1mm}] (e2.center)-- +(0,1) node (e3) { };
                    \draw[red,thick, preaction={draw,yellow,-,double=yellow, double distance=1mm}] (e3.center)-- +(-1,1);
                    \draw[dotted] (5,1.5) -- (-0.5,1.5);
                    \draw[dotted] (5,4) -- (-0.5,4);
                    \draw[dotted] (5,3) -- (-0.5,3);
                    %\draw[red,thick, preaction={draw,yellow,-,double=yellow, double distance=1mm}] (e1)--+(0,1.5) node (e1) { };
                    % \matrix[yshift=0.0cm, draw=none, anchor=north east] at (current bounding box.north east) {
                    %     \node[label=right:{\color{red}{\footnotesize \bm{$R_b$}}}] { }; \\
                    %     \node[label=right:{\color{green!50!black} {\footnotesize \bm{$R_\sigma$}}}] { }; \\
                    % };
		    \end{tikzpicture}
                %\caption{Right slant of $G$ determined by the thermographs of $R_b$ and $R_\sigma$ in case 1b when $m(R_b)>m(R_\sigma)$. The highlighted line is the Right slant given by $R_b$ and $R_\sigma$. The dotted lines are the reference for the turning points.}
                \caption{A more complex right wall of $G$.}
                \label{fig: Right slant of G in case 1b1}
            \end{subfigure}
            \caption{Thermographs of Right options, as in case (1B), for smaller heap sizes.}
            \label{fig: Thermograph of Right options in case 1b smaller heap sizes}
	\end{figure}

%\newpage
We can now compute the temperatures and mean values of {\sc Robin Hood} as stated in the main theorem from the Introduction, Theorem~\ref{thm: main theorem}.\\

\noindent{\bf Theorem~1.1} (Main Theorem) %\label{thm: main theorem} 
For fixed positive integers $a$ and $b$, let $G_n = (n;a,b)$ be instances of {\sc Robin Hood}. Let $(U_i)_{i\geq 0}$ be the unique sequence of positive integers such that
\begin{enumerate}
    \item $U_0\geq U_1$,
    \item $U_{k+2}=U_{k+1}+U_k \text{ for all } k\geq 0$, and
    \item for some $\mu\geq0$, $U_\mu=\min\Set{a,b}$ and $U_{\mu+1}=\max\Set{a,b}$. 
\end{enumerate} 
     For all sufficiently large heap sizes $n$, the temperature of $G_n$ is\\   $$t(G_n)=
  \begin{cases} 
        b-U_0 & \text{ if }~ \frac{a}{b} < \phi^{-1},\\
        n-a + \frac{U_0-b}{2} & \text{ if }~ \phi^{-1}<\frac{a}{b} <1,\\
        n-a & \text{ if }~ \frac{a}{b} = 1,\\
        n-b+\frac{U_0-a}{2} & \text{ if }~ 1<\frac{a}{b} < \phi,\\
        a-U_0 & \text{ if }~ \phi<\frac{a}{b},\\
  \end{cases}$$
  and the mean value  is\\   $$m(G_n)=
  \begin{cases} 
        a+b-n-U_0 & \text{ if }~ \frac{a}{b} < \phi^{-1},\\
        \frac{U_0-b}{2} & \text{ if }~ \phi^{-1}<\frac{a}{b} <1,\\
        0 & \text{ if }~ \frac{a}{b} = 1,\\
        \frac{a-U_0}{2} & \text{ if }~ 1<\frac{a}{b} < \phi,\\
        n-(a+b)+U_0 & \text{ if }~ \phi<\frac{a}{b}.\\
  \end{cases}$$

\begin{proof}
Let \( G_n^* \) represent the Little John game associated with \( G_n \), i.e., \( G_n^* = (n; a, b)^* \). According to Theorem~\ref{thm:RHthermographs}, for sufficiently large \( n \), \( \T(G_n) \) equals \( \T(G_n^*) \), implying:
\[
t(G_n) = t(G_n^*) \quad \text{and} \quad m(G_n) = m(G_n^*).
\]
Thus, the problem reduces to determining the temperature and mean value of \( G_n^* \). As shown in Theorem~\ref{thm: LJthermograph}, \( \T(G_n^*) \) depends on the wealth ratio \( a/b \) for sufficiently large \( n \). Therefore, we consider different cases based on the wealth ratio \( a/b \) and take \( n \) to be large enough that both Theorems~\ref{thm: LJthermograph} and \ref{thm:RHthermographs} are applicable.

\begin{enumerate}
    \item[\bf{(1)}]  \bm{$ a/b < \phi^{-1}\!: $} In this case, by Theorem~\ref{thm: LJthermograph}, \( G_n^* \in \RST \), and by Lemma~\ref{lem: stop value}, the stops are \( s(G_n^*) = (a + b - n - U_0, \; a - n) \). Hence, we get:
    \[
    m(G_n^*) = a + b - n - U_0 \quad \text{and} \quad t(G_n^*) = b - U_0,
    \]
    as per Lemma~\ref{lem: temp of tents}. A similar argument applies if \( a/b > \phi \).
    
    \item[\bf{(2)}]  \bm{$ \phi^{-1} < a/b < 1\!: $} In this case, by Theorem~\ref{thm: LJthermograph}, \( G_n^* \in \DT \), and by Lemma~\ref{lem: stop value}, the stops are \( s(G_n^*) = (n - (a + b) + U_0, \; a - n) \). Therefore, we have:
    \[
    m(G_n^*) = \frac{U_0 - b}{2} \quad \text{and} \quad t(G_n^*) = n - a + \frac{U_0 - b}{2}
    \]
    as shown in Lemma~\ref{lem: temp of tents}. A symmetric argument holds if \( 1 < a/b < \phi \).
    
    \item[\bf{(3)}]  \bm{$ a/b = 1\!: $} In this case, by Theorem~\ref{thm: LJthermograph}, \( G_n^* \in \DT \), and by Lemma~\ref{lem: stop value}, the stops are \( s(G_n^*) = (n - a, \; a - n) \). The desired result follows by applying Lemma~\ref{lem: temp of tents}.
\end{enumerate}

This concludes the proof.
\end{proof}

\section*{Open problems}
\begin{enumerate}[ labelindent=0pt]
    \item Solve the `middle region' of {\sc Robin Hood}, in terms of thermographs, whenever  $0<\min\{a,b\}< n\le a+b$. 
    \item Find explicit bounds on ``sufficiently large'' heap sizes for the main theorem to apply.
    \item Study the Canonical Forms of {\sc Robin Hood}, and in cases where this is hard, study instead the so-called Reduced Canonical Form \cite{S2013}.
    \item Study more instances of {\sc Wealth Nim}.
    \item Study the outcomes of a ruleset where there is only one global pair of wealths for the entire compound, governing how many tokens may be removed from any single heap. The reduction of the opponent's wealth would be the same as here. For example if the wealth pair is $(a,b)=(2,3)$ and there are two heaps of sizes one and two, then, if Left starts, she can win if she removes one token from the second heap and reduce Right's wealth by one rupee. However, if she plays the Little John move and reduces Right's wealth by two rupees, and removes the second heap, then she loses. In this perspective, {\sc Robin Hood} is a local variation, where each heap has an individual wealth pair. Compare with two papers on {\em local} vs. {\em global} {\sc Fibonacci Nim} \cites{LR-S2015,LR-S2018}.
    \item Given a number of heaps, and a global wealth for each player, solve the simultaneous play `optimal' assignments of wealth per heap, in the sense of Colonel Blotto \cite{EB1921}. In cases where hotstrat applies, the main results of this paper should guide such assignments, and otherwise one would need better understanding of (reduced) canonical forms.

\end{enumerate}

\section*{Acknowledgments}
We would like to express our heartfelt gratitude to Prof. Carlos Pereira dos Santos for his invaluable assistance and insightful discussions; in particular concerning Theorem~\ref{thm: RH positions}. %His assistance in refining some of the proofs has been instrumental in shaping the results presented in this paper.

\iffalse
\begin{definition}[Homber]
    A game $G$ is a Homber, if in the canonical form of $G$, every sub-position of $G$ is either hot or a Number.
\end{definition}
\begin{definition}[Homber]
    A game $G$ is a Homber, if every sub-position of $G$ is either hot or a Number in the canonical form of $G$.
\end{definition}
\begin{remark}
    For any Homber game $G$, if a sub-position $H$ of $G$ is a Number, then all the sub-positions of $H$ are also Numbers.
\end{remark}

\begin{definition}[Separable Ruleset]
    A ruleset is separable if there exists at least two disjoint non-empty sets of game values from the ruleset such that every non-zero sub-position of every game from one set belongs to the same set. A ruleset is almost separable if it is separable for almost all positions. 
\end{definition}
\fi
%%%%%%%%%%%%%%%%%%%%%%%%%%%%%%%%%%%%%%%%%%%%%%%%%%%%%%%%%%%%%%%%%%%%%%%%%%%%%%%%%%%%%%%%%%%%%%%%%%%%
\bibliographystyle{plain}

\begin{thebibliography}{9}
\bibitem{BCG2004} E. R. Berlekamp, J. H. Conway and R. K. Guy. {\em Winning Ways for Your Mathematical Plays}. AK Peters/CRC Press, 2004.
\bibitem{EB1921} \'E. Borel, The Theory of Play and Integral Equations with Skew Symmetric Kernels. {\em Econometrica}, 21(1), (1953), 97--100. (The paper was originally published in French in 1921.)
\bibitem{cole1969game} Alfred J. Cole and A. J. T. Davie, A game based on the Euclidean algorithm and a winning strategy for it, {\em The Mathematical Gazette}, volume 53, (1969), 354--357.
\bibitem{GS2009} J.P. Grossman and A.N. Siegel, Reductions of Partizan Games (Eds. M. H. Albert and R. J. Nowakowski) {\em Games of No Chance 3}, MSRI CUP, Volume 56, 2009
\bibitem{URYCG2020} U. Larsson, R. Meir and Y. Zick, Cumulative Games: Who is the current player? Preprint: https://arxiv.org/pdf/2005.06326
\bibitem{M1953} J. Milnor, Sums of positional games, {\em Ann. of Math. Stud.} (Contributions to the Theory of Games, H. W. Kuhn and A. W. Tucker, eds.), Princeton 2.28 (1953), 291--301.
\bibitem{S2013}  A. N. Siegel, {\em Combinatorial Game Theory}, American Mathematical Society, 2013.%, ISBN 978-0-8218-5190-6 (alk. paper)
\bibitem{CGSuite} Aaron N. Siegel, \emph{CGSuite}, A tool for computational combinatorial game theory, \url{https://www.cgsuite.org/}, accessed November 27, 2024.
%\bibitem{img} Grass image, Wikipedia.%https://commons.wikimedia.org/wiki/File:Grass_compartment.svg
\bibitem{LR-S2018} U. Larsson, S. Rubinstein-Salzedo, Global Fibonacci Nim, {\em Int. J. Game Theory} 47, Special Issue on Combinatorial Games, 2018. Invited paper from Combinatorial Game Theory Colloquium, Lisbon 2015 (2018), 595–611.
\bibitem{LR-S2015} U. Larsson, S. Rubinstein-Salzedo, Grundy values of Fibonacci nim, {\em Int. J. Game Theory}, (2015), 45: 617.
\end{thebibliography}

\end{document}